\newtheorem{Thm}{Theorem}[section]
\newtheorem{Cor}[Thm]{Corollary}
\newtheorem{Lemma}[Thm]{Lemma}
\newtheorem{Prop}[Thm]{Proposition}
\theoremstyle{definition}
\newtheorem{Rmk}[Thm]{Remark}
\newtheorem{Example}[Thm]{Example}
\newtheorem*{notation}{Notation}
\numberwithin{equation}{section}
\newcommand{\overbar}[1]{\mkern 1.5mu\overline{\mkern-1.5mu#1\mkern-1.5mu}\mkern 1.5mu}
\def\frakP{\mathfrak{P}}
\def\frakT{\mathfrak{T}}
\def\bbc{\mathbb{C}}
\def\bbn{\mathbb{N}}
\def\bbz{\mathbb{Z}}
\def\calH{\mathcal{H}}
\def\calI{\mathcal{I}}
\def\calO{\mathcal{O}}
\def\calP{\mathcal{P}}
\def\calS{\mathcal{S}}
\def\calU{\mathcal{U}}
\def\calZ{\mathcal{Z}}
\def\calS{\mathcal{S}}
\def\calT{\mathcal{T}}
\def\calQ{\mathcal{Q}}
\def\lra{\longrightarrow}
\def\x{\times}
\def\aut{\mathrm{Aut}}
\def\id{\mathrm{id}}
\def\sp{\mathrm{sp}}
\def\E{\mathrm{(E)}}
\def\GR{\mathrm{GR}}
\def\Sym{\mathsf{Sym}}
\def\FSym{\mathsf{FSym}}
\def\FAlt{\mathsf{FAlt}}
\def\supp{\mathrm{supp}}
\def\Es{\mathrm{supp_e}}
\def\supp{\mathrm{supp}\:}
\def\Es{\mathrm{Esupp}\:}
\def\pt{\mathsf{p}}
\def\qt{\mathsf{q}}
\def\tp{\tilde{\phi}}
\def\ES{\mathrm{ES(\phi)}}
\def\E{\mathrm{E}}
\def\boxit#1{\vbox{\hrule\hbox{\vrule\kern3pt
     \vbox{\kern3pt#1\kern3pt}\kern3pt\vrule}\hrule}}
\def\sqbox#1{
\gdef\boxcont{\rm #1}
  \setbox4=\vbox{\hsize 38pc \noindent \strut \boxcont\strut}
  \par\centerline{\boxit{\box4}}\par}
\def\colr#1{\textcolor{red}{#1}}
\def\colb#1{\textcolor{blue}{#1}}
\begin{document}
\title[Growth rate of endomorphisms of Houghton's groups $\calH_n$]
{Growth rate of endomorphisms of Houghton's groups}

\author{Jong Bum Lee}
\address{Department of Mathematics, Sogang University, Seoul 121-742, KOREA}
\email{jlee@sogang.ac.kr}

\author{Sang Rae Lee}
\address
{Department of Mathematics, Texas A\&M University, College Station, Texas 77843, USA}
\email{srlee@math.tamu.edu}

\subjclass[2000]{}%
\keywords{Houghton's groups, Growth rate of endomorphisms}

\begin{abstract}
A Houghton's group $\calH_n$ consists of translations at infinity of a $n$ rays of discrete points on the plane. In this paper we study the growth rate of endomorphisms of Houghton's groups. We show that if the kernel of an endomorphism $\phi$ is not trivial then the growth rate $\GR(\phi)$ equals either $1$ or the spectral radius of the induced map on the abelianization. It turns out that every monomorphism $\phi$ of $\calH_n$ determines a unique natural number $\ell$ such that $\phi(\calH_n)$ is generated by translations with the same translation length $\ell$. We use this to show that $\GR(\phi)$ of a monomorphism $\phi$ of $\calH_n$ is precisely
$\ell$ for all $2\leq n$.
\end{abstract}
\date{\today}
\maketitle



\section{Introduction}\label{sec:introd}

Let $G$ be a finitely generated group with a generating set $A$.
Let $\phi:G\to G$ be an endomorphism.
For any $g\in G$,
let $|g|$ denote the \emph{word length} of $g$, that is, the minimum length of a word over $A\cup A^{-1}$
which represents $g$.
Then the {\bf growth rate} of $\phi$ is defined to be (\cite{B78})
$$
\GR(\phi):=\sup\left\{\limsup_{k\to\infty}(|\phi^k(g)|)^{1/k}: g\in G\right\}.
$$
The growth rate is well-defined,
i.e., independent of the choice of a set of generators  (\cite[p.~\!114]{KH}).

The problem of determining the growth rate of a group endomorphism was initiated by
R. Bowen in (\cite{B78}).  The growth rate of an endomorphism is related to algebraic entropy and topological entropy (\cite{B78}, \cite{KH}). Algebraic entropy of $\phi$ is defined by $h_{\mathrm{alg}}(\phi)= \log \GR(\phi)$. Note that $h_{\mathrm{alg}}(\phi)$ provides a lower bound for the topological entropy of a continuous self map $f$ on a compact connected manifold $M$ which induces the endomorphism $\phi$ of $\pi_1(M)$.

A group theoretic approach is discussed in \cite{FFK} including the result that $\GR(\phi)$ is finite and bounded by the maximum length of the image of a generator. In case $\phi$ is an automorphism of a nilpotent group it is shown in \cite{K} that $\GR(\phi)$ coincides with the growth rate of the induced automorphism on its abelianization. In \cite{FJL}, the first author extends this to all endomorphisms of nilpotent groups. In the same article it was proven that $\GR(\phi)$ is an algebraic number if $\phi$ is an endomorphism of a torsion-free nilpotent or lattices of Sol.

For $n\in \bbn$, a Houghton's group $\calH_n$ consists of \emph{eventual translations} on a disjoint union of $n$ copies of $\bbn$, each arranged along a ray emanating from the origin in the plane. The group $\calH_2$ is finitely generated but not finitely presented. In general, by the work of K. Brown (\cite{B}), $\calH_n$ has finiteness type $F_{n-1}$ but not $F_n$. For each $n$, $\calH_n$ fits into the short exact sequence.
$$
1\lra \FSym_n\lra\calH_n \lra\bbz^{n-1}\lra1.
$$
where $\FSym_n$ consists of permutations on the underlying set with finite supports. Note that every $\calH_n$ contains all finite groups. Our main result is the following.

\begin{Thm}
Suppose $\phi$ is an endomorphism of $\calH_n$, $2\leq n$. Every monomorphism $\phi$ of $\calH_n$ determines $\ell \in \bbn$ such that $\GR(\phi)=\ell$. If $\phi$ is not a monomorphism then $\GR(\phi)$ equals either $1$ or the spectral radius of the induced map on the abelianization.
\end{Thm}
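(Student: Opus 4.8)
The plan is to split the argument along the dichotomy in the statement and exploit the structure of $\calH_n$ relative to the extension $1\to\FSym_n\to\calH_n\to\bbz^{n-1}\to1$.

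First I would handle the case where $\phi$ is \emph{not} a monomorphism. The key structural fact is that $\FSym_n$ is the unique minimal normal subgroup of $\calH_n$ (it is the union of the finite normal subgroups, and any nontrivial normal subgroup contains a nontrivial element of $\FSym_n$, hence contains $\FAlt_n$ and then $\FSym_n$). Since $\ker\phi$ is a nontrivial normal subgroup, it contains $\FSym_n$. Therefore $\phi$ factors through $\calH_n/\FSym_n\cong\bbz^{n-1}$, i.e.\ $\phi=\iota\circ\bar\phi\circ\pi$ for some homomorphism $\bar\phi:\bbz^{n-1}\to\calH_n$ and a section; more usefully, the image $\phi(\calH_n)$ is a finitely generated abelian group, a subgroup of $\calH_n$ isomorphic to a quotient of $\bbz^{n-1}$. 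Now I would compute $\GR(\phi)$ directly: since $\phi^k(g)$ depends only on $\pi(g)\in\bbz^{n-1}$, and on the image side multiplication and word length behave (up to bounded distortion, using that a f.g.\ abelian subgroup of $\calH_n$ is undistorted in the relevant coarse sense, or simply that word length of $\phi^k(g)$ is comparable to the $\bbz^{n-1}$-word length of the linear iterate), one gets $\GR(\phi)=\limsup_k \|B^k v\|^{1/k}$ over $v$, where $B$ is the linear map on $\bbz^{n-1}$ through which $\phi$ descends. This limsup is exactly the spectral radius of $B$ if $B$ has an eigenvalue of modulus $>1$, and is $1$ otherwise (spectral radius $\le 1$ forces polynomial or bounded growth of $\|B^k v\|$, hence $\GR=1$). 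Finally I must identify $B$ with the map induced by $\phi$ on the abelianization $\calH_n^{\ab}$: since $\FSym_n\subseteq\ker\phi$ and $[\calH_n,\calH_n]\supseteq\FAlt_n$ with $\calH_n^{\ab}$ a quotient of $\bbz^{n-1}$ (in fact $\calH_n^{\ab}\cong\bbz^{n-1}$ for $n\ge 2$, possibly up to a finite piece for small $n$ — this needs a quick check), the induced maps agree up to the difference between $\bbz^{n-1}$ and $\calH_n^{\ab}$, which does not affect the spectral radius. That finishes the non-injective case.

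For the monomorphism case, I would lean entirely on the result quoted in the abstract: every monomorphism $\phi$ of $\calH_n$ determines a unique $\ell\in\bbn$ such that $\phi(\calH_n)$ is generated by eventual translations all of translation length $\ell$. Assuming that structural theorem (proved earlier in the paper), the task reduces to showing $\GR(\phi)=\ell$. For the upper bound $\GR(\phi)\le\ell$: one bounds $|\phi^k(a)|$ for each generator $a$ of $\calH_n$. Since $\phi$ scales translation lengths by $\ell$ at each application — the "$\ell$-th power map" on the translation part combined with an eventual permutation — iterating gives that $\phi^k(a)$ is (represented by) an eventual translation/permutation moving points by distances $O(\ell^k)$, so $|\phi^k(a)|\le C\ell^k$, whence $\limsup|\phi^k(a)|^{1/k}\le\ell$ and taking sup over generators keeps this. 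For the lower bound $\GR(\phi)\ge\ell$: pick a generator $t$ of $\calH_n$ that is a translation of length $1$ along one ray; then $\phi^k(t)$ is, on the nose, an eventual translation of length $\ell^k$ along (some ray in) the image, and a translation of length $m$ in $\calH_n$ has word length $\ge c\cdot m$ for a uniform $c>0$ (a translation by $m$ must drag $m$ points out to infinity, and each generator in a word over $A$ moves only boundedly many points a bounded distance — a counting/displacement argument). Hence $|\phi^k(t)|\ge c\,\ell^k$ and $\GR(\phi)\ge\ell$.

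The main obstacle is the lower bound in the monomorphism case, specifically the claim that word length in $\calH_n$ is bounded below linearly in the translation length of an eventual translation. One has to make precise a "total displacement" or "number of points moved to infinity" quasi-morphism on $\calH_n$: for a standard generating set $A$, each $a\in A\cup A^{-1}$ changes the position of only finitely many points and by a bounded amount, so along any word of length $L$ the net large-scale transport is $O(L)$; an eventual translation of length $\ell^k$ forces transport $\Omega(\ell^k)$. Care is needed because elements of $\FSym_n$ contribute nothing to translation length yet can have large word length, so the inequality only goes one way and must be set up as a lower bound via an explicit Lipschitz function $\calH_n\to\bbz^{n-1}$ (essentially the abelianization map, tracked metrically) rather than an equivalence. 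Once that Lipschitz-lower-bound lemma is in hand — and it may well already appear in the paper's analysis of word length in $\calH_n$ — both bounds close and the theorem follows by combining the two cases.
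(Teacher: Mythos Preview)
Your non-injective argument has a genuine error. You assert that any nontrivial normal subgroup of $\calH_n$ contains $\FSym_n$; this is false. What is true (and what the paper proves as Corollary~\ref{cor:normal subgroup}) is that any nontrivial normal subgroup contains $\FAlt_n$, and its intersection with $\FSym_n$ is either $\FAlt_n$ or $\FSym_n$. So $\ker\phi$ may meet $\FSym_n$ only in $\FAlt_n$, and then $\phi$ does \emph{not} factor through $\bbz^{n-1}$. The paper handles this second case separately: one passes to $\calH_n/\FAlt_n$, which sits in a central extension $1\to\bbz_2\to\calH_n/\FAlt_n\to\bbz^{n-1}\to1$, and then applies a product formula for growth rates across short exact sequences to get $\GR(\phi)=\max\{\GR(\phi'),\GR(\bar\phi)\}$ with $\GR(\phi')\in\{0,1\}$. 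Your argument as written simply omits this case.

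In the monomorphism case you have the difficulty backwards. The lower bound $\GR(\phi)\ge\ell$ is the easy direction and follows exactly as you sketch at the end: the abelianization map $\pi:\calH_n\to\bbz^{n-1}$ is $1$-Lipschitz, and $\bar\phi$ acts on $\bbz^{n-1}$ with spectral radius $\ell$, so $\GR(\phi)\ge\GR(\bar\phi)=\ell$. The hard direction is the upper bound. Your claim that $|\phi^k(a)|\le C\ell^k$ does not follow from the fact that $\phi^k(a)$ has translation length $\ell^k$: write $\phi(g_i)=g_i^\ell f_i$ with $f_i\in\FSym_n$; then $\phi^{k+1}(g_i)=\phi^k(g_i)^\ell\,\phi^k(f_i)$, and controlling $|\phi^k(f_i)|$ is the entire content of the argument. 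A priori the finitary part $\phi^k(f_i)$ is a product of roughly $\ell^k$ transpositions spread over a ball of radius $\sim\ell^k$, so the naive bound on its word length is $\sim\ell^{2k}$, not $\ell^k$. The paper spends most of Section~5 building an ``expanding map'' $\tilde\phi$ on $X_n$, analyzing the associated rooted $\ell$-ary trees, and proving (Lemma~\ref{lm:bound on transpositions}) that $|\phi^k(\tau)|\le\rho(k)\ell^k$ for a polynomial $\rho$; only then does the recursion close to give $|\phi^k(g_i)|\le\bar\rho(k)\ell^k$ and hence $\GR(\phi)\le\ell$. Your one-line justification for the upper bound skips this, and there is no shortcut.
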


In Section \ref{sec:Houghton} we define Houghton's groups and review basic facts including explicit presentations. In Section \ref{sec:auto} we deal with automorphisms of $\calH_n$. We calculate $\GR(\phi)$ by using the structure of $\aut(\calH_n)$ which  is well understood. If $\phi$ is an endomorphism with non trivial kernel we can reduce the calculation of $\GR(\phi)$ to the problem of growth rate of endomorphisms on finitely generated free abelian groups. In Section \ref{sec:not monic} we use the result of \cite{FJL} to prove the second part of our main theorem above. The rest of paper is used to understand monomorphisms of $\calH_n$. For $3\leq n$, $\calH_n$ is generated by $g_2, \cdots, g_{n}$ where $g_i$ translates points on the first ray toward $i^{th}$ ray by $1$. We use the relations of $\calH_n$ together with the ray structure of the underlying set to characterize the behavior of monomorphisms in several steps carefully. It turns out that each monomorphism $\phi$ determines $\ell\in \bbn$ such that $\phi$ maps each generator $g_i$ of $\calH_n$ to a translation of length $\ell$. This character of a monomorphism is essential for us to understand iterations $\phi^k$ applied to an element $f$ with a finite support, which is the main obstruction in calculating $\GR(\phi)$. Perhaps Houhgton's groups are first examples of groups where relations were used extensively to calculate the growth rate of endomorphisms.

The following provides an effective calculation for $\GR(\phi)$ (\cite{FFK}), which will be used throughout the paper. For an endomorphism $\phi: G\to G$, let $|\phi^k|$ denote the maximum of $|\phi^k(a_i)|$ over a generating set $A = \{a_1, \cdots, a_m\}$ of $G$, then
$$
\GR(\phi)=\lim_{k\to\infty}(|\phi^k|)^{1/k}
=\inf_{k}\left\{(|\phi^k|)^{1/k}\right\}.
$$

\section{Houghton's groups $\calH_n$}\label{sec:Houghton}

Let us use the following notational conventions. All bijections (or permutations) act on the right unless otherwise specified. Consequently $gh$ means $g$ followed by $h$. The conjugation by $g$ is denoted by $\mu(g)$, $h^g = g^{-1}hg =: \mu(g)(h)$,  and the commutator is defined by $[g,h] = ghg^{-1}h^{-1}$.

Our basic references are \cite{H,SR} for Houghton's groups and \cite{BCMR} for their automorphism groups.
Fix an integer $n\ge1$. For each $k$ with $1\le k\le n$, let
$$
R_k=\left\{me^{i\theta}\in\bbc\mid m\in\bbn,\ \theta=\tfrac{\pi}{2}+(k-1)\tfrac{2\pi}{n} \right\}
$$
and let $X_n=\bigcup_{k=1}^n R_k$ be the disjoint union of $n$ copies of $\bbn$,
each arranged along a ray emanating from the origin in the plane.
We shall use the notation $\{1,\cdots,n\}\x\bbn$ for $X_n$,
letting $(k,p)$ denote the point of $R_k$ with distance $p$ from the origin.

A bijection $g:X_n\to X_n$ is called an \emph{eventual translation}
if the following holds:
\begin{quote}
There exist an $n$-tuple $(m_1,\cdots,m_n)\in\bbz^n$ and a finite set $K_g\subset X_n$ such that
\begin{equation}\label{eq:eventual_translation}
(k,p)\cdot g:=(k,p+m_k)\quad \forall (k,p)\in X_n\setminus K_g.
\end{equation}
\end{quote}
An eventual translation acts as a translation on each ray outside a finite set.
For each $n \in \bbn$ the \emph{Houghton's group} $\calH_n$ is defined to be the group of all eventual translations of $X_n$.

Let $g_i$ be the translation on the ray of $R_1\cup R_i$ by $1$ for $2\le i\le n$.
Namely,
$$
(j,p)\cdot g_i=\begin{cases}
(1,p-1)&\text{if $j=1$ and $p\ge2$,}\\
(i,1)&\text{if $(j,p)=(1,1)$,}\\
(i,p+1)&\text{if $j=i$,}\\
(j,p)&\text{otherwise.}\end{cases}
$$


Johnson provided a finite presentation for $\calH_3$ in \cite{J} and the second author gave a finite presentation for $\calH_n$ with $n \geq 3$ in \cite{SR} as follows:
\begin{Thm}[{\cite[Theorem~C]{SR}}]\label{C}
For $n\ge3$, $\calH_n$ is generated by $g_2,\cdots,g_{n},\alpha$ with relations
\begin{equation*}
\alpha^2=1,\
(\alpha\alpha^{g_2})^3=1,\
[\alpha,\alpha^{g_2^2}]=1,\
\alpha=[g_i,g_j],\
\alpha^{g_i^{-1}}=\alpha^{g_j^{-1}}\
\end{equation*}
for $2\le i\ne j\le n$.
\end{Thm}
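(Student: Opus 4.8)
The plan is a von Dyck argument that reduces, via the extension $1\to\FSym_n\to\calH_n\to\bbz^{n-1}\to1$, to recognizing a presentation of the finitary symmetric group. Let $\Gamma$ be the abstractly presented group in the statement and $\psi\colon\Gamma\to\calH_n$ the homomorphism sending each generator to the eventual translation of the same name. First I would check $\psi$ is well defined and surjective. Well-definedness is a direct verification that the five relation families hold in $\calH_n$; the only one needing work is computing $[g_i,g_j]$ from the formulas for the $g_i$, which gives the transposition $\big((1,1)\,(1,2)\big)$ for every $i\ne j$, so that $\alpha$ is identified with this transposition and $\alpha^2=1$, $(\alpha\alpha^{g_2})^3=1$, $[\alpha,\alpha^{g_2^2}]=1$, $\alpha^{g_i^{-1}}=\alpha^{g_j^{-1}}$ become immediate. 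For surjectivity, conjugating $\alpha$ by words in the $g_i$ (concretely by $g_2^{-k}$, by $g_i$, and by $g_i^{k}$) yields a transposition across every edge of the spanning tree $T$ of $X_n$ built from the arm $(1,1),(1,2),(1,3),\dots$ together with the arms $(1,1),(i,1),(i,2),\dots$ for $2\le i\le n$; edge transpositions of a spanning tree generate $\FSym(X_n)=\FSym_n$, and the $g_i$ already map onto $\bbz^{n-1}$, so $\psi$ is onto.

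Set $N=\llangle\alpha\rrangle\trianglelefteq\Gamma$. Adding the relation $\alpha=1$ to the presentation leaves only the relations $[g_i,g_j]=1$, so $\Gamma/N$ is free abelian on $g_2,\dots,g_n$; also $\psi$ restricts to a surjection $N\to\FSym_n$ and induces an isomorphism $\Gamma/N\xrightarrow{\sim}\calH_n/\FSym_n$. By the five lemma, $\psi$ is an isomorphism once $\psi|_N\colon N\to\FSym_n$ is injective. For that it suffices to fix a presentation of $\FSym_n$ and, for each of its generators, a conjugate of $\alpha$ in $N$ mapping to that generator under $\psi$, in such a way that (i) every defining relation of $\FSym_n$ holds among these chosen conjugates and (ii) the chosen conjugates generate $N$; the induced homomorphism $\FSym_n\to N$ is then a surjective section of $\psi|_N$, forcing both to be isomorphisms.

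This last point is the combinatorial core. I would present $\FSym_n=\FSym(X_n)$ as the increasing union of the symmetric groups on the finite balls $B_r=\{(k,p):1\le k\le n,\ p\le r\}$, each with its standard Coxeter presentation on the transpositions of consecutive points of a linear order on $B_r$ --- each ray ordered outward from the origin, the rays joined at the point $(1,1)$ as in the tree $T$. Consecutive transpositions inside one ray lift to conjugates $\alpha^{g_i^{\pm k}}$, which all lie along the bi-infinite line $\dots,(1,3),(1,2),(1,1),(i,1),(i,2),\dots$; conjugating the three relations $\alpha^2=1$, $(\alpha\alpha^{g_2})^3=1$, $[\alpha,\alpha^{g_2^2}]=1$ by powers of $g_i$ gives all the braid and far-commutation relations along such a line, and conjugating by further $g$-words gives the commutation relations between transpositions supported in distinct rays away from the origin. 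The finitely many remaining relations are those among the transpositions incident to $(1,1)$, where the lines meet; these, and the rewriting needed for (ii) near $(1,1)$, are exactly what the relations $\alpha=[g_i,g_j]$ and $\alpha^{g_i^{-1}}=\alpha^{g_j^{-1}}$ are for.

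I expect this branch-point analysis to be the main obstacle. Away from the origin every ray looks like a single line and the relations propagate mechanically under conjugation, but at $(1,1)$ the $n$ incident edge-transpositions $\alpha,\alpha^{g_2},\dots,\alpha^{g_n}$ do \emph{not} present the relevant finite symmetric group through their pairwise Coxeter relations alone --- for $n\ge3$ those relations by themselves define an infinite group --- so additional relations are forced, and showing that $\alpha=[g_i,g_j]$ and $\alpha^{g_i^{-1}}=\alpha^{g_j^{-1}}$ supply precisely what is missing, together with arranging the conjugating words so that conjugates of the basic relations land on the intended pairs of transpositions, is the delicate bookkeeping. This is also where the hypothesis $n\ge3$ is genuinely used: for $n=2$ there is only the single generator $g_2$, $\alpha$ is not a commutator of two of the $g_i$, and correspondingly $\calH_2$ is not finitely presented.
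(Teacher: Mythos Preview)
The paper does not prove this theorem; it is quoted verbatim from \cite{SR} (and for $n=3$ from Johnson \cite{J}) and used as input, so there is no proof here to compare your attempt against.

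Your strategy is the right one and matches the standard approach: von Dyck surjection $\psi\colon\Gamma\to\calH_n$, normal closure $N=\llangle\alpha\rrangle$, identification $\Gamma/N\cong\bbz^{n-1}$, and the five-lemma reduction to $\psi|_N\colon N\to\FSym_n$ being an isomorphism. The surjectivity and well-definedness arguments you sketch are correct.

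One conflation to watch: you describe the target presentation of $\FSym(B_r)$ as ``its standard Coxeter presentation on the transpositions of consecutive points of a linear order'' while simultaneously wanting those transpositions to be the edge-transpositions of the tree $T$. For $n\ge3$ these are incompatible: the tree $T$ has a vertex of degree $n$ at $(1,1)$, so its edge-transpositions do not arise from any linear order, and the relations they satisfy are not the type-$A$ Coxeter relations. You recognize this in your last paragraph, but it means the presentation you must verify inside $N$ is a \emph{tree} presentation of the symmetric group, not a Coxeter one. At a branch vertex of degree $d$ the pairwise braid relations among the $d$ incident transpositions present an infinite group, and one needs an extra relation (for instance that a specific product of the incident transpositions has the correct finite order, or equivalently a cycle relation) to cut down to $\Sym_{d+1}$. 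Showing that $\alpha=[g_i,g_j]$ and $\alpha^{g_i^{-1}}=\alpha^{g_j^{-1}}$ manufacture exactly this missing relation at $(1,1)$, and that conjugation by the $g_i$ propagates everything correctly, is precisely the content of Johnson's computation for $n=3$ and its extension in \cite{SR}; you have correctly located the obstacle but not resolved it.
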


From the definition of Houghton's groups, the assignment $g\in\calH_n\mapsto (m_1,\cdots,m_n)\in\bbz^n$
defines a homomorphism $\pi=(\pi_1,\cdots,\pi_n):\calH_n\to\bbz^n$. Then we have:

\begin{Lemma}[{\cite[Lemma~2.3]{SR}}]
For $n\ge3$, we have $\ker\pi
=[\calH_n,\calH_n]$.
\end{Lemma}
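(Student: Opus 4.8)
The plan is to show that both $\ker\pi$ and $[\calH_n,\calH_n]$ coincide with the finitary symmetric group $\FSym_n$ of all finitely supported permutations of $X_n$. One inclusion is immediate: $\pi$ maps into the abelian group $\bbz^n$, so $[\calH_n,\calH_n]\subseteq\ker\pi$; and by \eqref{eq:eventual_translation} an eventual translation lies in $\ker\pi$ exactly when its translation tuple $(m_1,\dots,m_n)$ vanishes, i.e.\ exactly when the bijection is the identity off a finite set, so $\ker\pi=\FSym_n$. (One may also record that the tuples arising from $\calH_n$ are precisely those with $\sum_k m_k=0$, since a bijection of $X_n$ must conserve mass; as $\pi(g_i)=\bfe_i-\bfe_1$, this recovers $\im\pi\cong\bbz^{n-1}$, consistent with the short exact sequence of the introduction, though it is not needed below.)

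The substance is the reverse inclusion $\FSym_n\subseteq[\calH_n,\calH_n]$, and the key observation is that the generator $\alpha$ of Theorem~\ref{C} is a single transposition: a direct check from the formulas for $g_2$ and $g_3$ gives
\[
\alpha=[g_2,g_3]=\bigl((1,1)\ (1,2)\bigr).
\]
Granting this, $[\calH_n,\calH_n]$ is a normal subgroup of $\calH_n$ containing this transposition. Since $\calH_n$ contains $\FSym_n$, it acts transitively on ordered pairs of distinct points of $X_n$, and conjugating a transposition $(a\ b)$ by $g\in\calH_n$ produces the transposition $(a\cdot g\ \ b\cdot g)$; hence every transposition of $X_n$ is an $\calH_n$-conjugate of $\alpha$ and therefore lies in $[\calH_n,\calH_n]$. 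As $\FSym_n$ is generated by its transpositions, we get $\FSym_n\subseteq[\calH_n,\calH_n]$, and combining with the first paragraph, $[\calH_n,\calH_n]=\ker\pi=\FSym_n$. (In particular $[\calH_n,\calH_n]\not\subseteq\FAlt_n$, since $\alpha$ is odd.)

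There is no real obstacle here. The hypothesis $n\ge3$ enters only so that there are distinct indices $i\ne j$ in $\{2,\dots,n\}$ and the relation $\alpha=[g_i,g_j]$ is available; the one computation to carry out carefully is the verification that this commutator equals the transposition $\bigl((1,1)\ (1,2)\bigr)$ — alternatively one may simply quote the explicit description of the generators of $\calH_n$ from \cite{SR}. Everything else is the standard fact that the finitary symmetric group of a set is generated by a single conjugacy class of transpositions.
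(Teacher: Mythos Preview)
Your proof is correct. The paper does not actually supply its own proof of this lemma; it is stated with a citation to \cite[Lemma~2.3]{SR}. That said, the paper contains a commented-out remark sketching exactly your argument: from Theorem~\ref{C} the commutator subgroup is normally generated by $\alpha$, which is the transposition $\bigl((1,1)\ (1,2)\bigr)$, and its $\calH_n$-conjugates include all transpositions, so $\calH_n'=\FSym_n=\ker\pi$. So your approach coincides with the intended one.
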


Note that $\pi(g_i)\in\bbz^n$ has only two nonzero values $-1$ and $1$,
$$
\pi(g_i)=(-1,0,\cdots,0,1,0,\cdots,0)
$$
where $1$ occurs in the $i^{th}$ component.
Since the image of $\calH_n$ under $\pi$ is generated by those elements,
we have that
$$
\pi(\calH_n)=\left\{(m_1,\cdots,m_n)\in\bbz^n\mid \sum_{i=1}^n m_i=0\right\},
$$
which is isomorphic to the free Abelian group of rank $n-1$.
Consequently, $\calH_n$ ($n\ge3$) fits in the following short exact sequence
$$
1\lra\calH_n'=[\calH_n,\calH_n]\lra\calH_n\buildrel{\pi}\over\lra\bbz^{n-1}\lra1.
$$
The above abelianization, first observed by C. H. Houghton in \cite{H}, is the characteristic property of $\{\calH_n\}$ for which he introduced those groups in the same paper.
We may regard $\pi$ as a homomorphism $\calH_n\to\bbz^n\to\bbz^{n-1}$ given by
$$
\pi:g_i\mapsto(-1,0,\cdots,0,1,0,\cdots,0)\mapsto(0,\cdots,0,1,0,\cdots,0).
$$
In particular, $\pi(g_2),\cdots,\pi(g_n)$ form a set of free generators for $\bbz^{n-1}$.

As definition, $\calH_1$ is the symmetric group itself on $X_1$ with finite support,
which is not finitely generated.
Furthermore, $\calH_2$ is
$$
\calH_2=\langle g_2,\alpha \mid \alpha^2=1, (\alpha\alpha^{g_2})^3=1,
[\alpha,\alpha^{g_2^k}]=1 \text{ for all } |k|>1\rangle,
$$
which is finitely generated, but not finitely presented. It is not difficult to see that $\calH_2'=\FAlt_2$.

\begin{notation}
\begin{align*}
&\Sym_n= \text{ the full symmetric group of $X_n$,}\\
&\FSym_n= \text{ the symmetric group of $X_n$ with finite support,}\\
&\FAlt_n= \text{ the alternating group of $X_n$ with finite support.}
\end{align*}
\end{notation}
\noindent
\begin{Thm}[{\cite[Theorem~2.2]{BCMR}}]\label{auto}
For $n\ge2$, we have
$$
\aut(\calH_n)\cong\calH_n\rtimes \Sigma_n
$$
where $\Sigma_n$ is the symmetric group that permutes $n$ rays isometrically.
\end{Thm}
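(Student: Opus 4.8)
This theorem is quoted from \cite{BCMR}; here is the approach I would take.

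\emph{Step 1: reduce to a normalizer inside $\Sym_n$.} Every nontrivial normal subgroup of $\calH_n$ contains $\FAlt_n$, so $\FAlt_n$ is the unique minimal normal subgroup and in particular is characteristic; hence each $\phi\in\aut(\calH_n)$ restricts to an automorphism of $\FAlt_n$. Now use the classical facts that $\aut(\FAlt_n)=\Sym_n$, realized by conjugation, and that $C_{\Sym_n}(\FAlt_n)=1$. Choose $\sigma\in\Sym_n$ with $\phi(a)=a^\sigma$ for all $a\in\FAlt_n$. For $g\in\calH_n$ and $a\in\FAlt_n$, apply $\phi$ to $a^g$ and use $a^g\in\FAlt_n$; this yields that $g\,\sigma\phi(g)^{-1}\sigma^{-1}$ centralizes $\FAlt_n$, hence is trivial, so $\phi(g)=g^\sigma$ for \emph{all} $g\in\calH_n$. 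Thus $\phi$ is conjugation by $\sigma$, $\sigma$ normalizes $\calH_n$ in $\Sym_n$, and $\sigma$ is unique because $C_{\Sym_n}(\calH_n)\subseteq C_{\Sym_n}(\FAlt_n)=1$. Conversely each element of $N:=N_{\Sym_n}(\calH_n)$ induces an automorphism of $\calH_n$. Hence $\aut(\calH_n)\cong N$, and it remains to identify this normalizer.

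\emph{Step 2: compute $N$.} Clearly $\calH_n\le N$. Each permutation $\rho$ of $\{1,\dots,n\}$ gives $\bar\rho\in\Sym_n$ with $(k,p)\bar\rho=(\rho(k),p)$, and conjugation by $\bar\rho$ sends an eventual translation to an eventual translation, so $\Sigma_n:=\{\bar\rho\}$ is a subgroup of $N$ isomorphic to the symmetric group on the $n$ rays acting isometrically; a nontrivial $\bar\rho$ is not an eventual translation, so $\calH_n\cap\Sigma_n=1$, and $\calH_n\trianglelefteq N$ by definition, whence $\calH_n\rtimes\Sigma_n=\calH_n\Sigma_n\le N$. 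For the reverse inclusion take $\nu\in N$. For each $k$ pick $g^{(k)}\in\calH_n$ whose translation vector $\pi(g^{(k)})\in\bbz^n$ has its unique positive entry in coordinate $k$ (possible since, for $n\ge 2$, $\pi(\calH_n)$ is the nonzero sum-zero sublattice). Such a $g^{(k)}$ has a \emph{unique attracting end}: for all but finitely many $x\in X_n$ the forward orbit of $x$ eventually stays on $R_k$ and escapes to infinity there, the exceptions being eventually periodic inside the finite support region. Since $h_k:=\nu g^{(k)}\nu^{-1}\in\calH_n$ and its forward orbits are $\nu^{-1}$-images of those of $g^{(k)}$, cofinitely many forward orbits of $h_k$ escape to infinity, and---$h_k$ being an eventual translation---they can do so only along the single ray picked out by the unique positive entry of $\pi(h_k)$, whose index I call $\rho(k)$. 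Tracking the far part of $R_k$ through this forces $R_k\nu^{-1}=R_{\rho(k)}$ off a finite set, so $\bar\rho\in\Sigma_n$; replacing $\nu$ by $\bar\rho\nu\in N$, we may assume $\nu$ carries each ray into itself off a finite set. Finally feed the generators $g_i$ ($2\le i\le n$) into the same machine: $\nu^{-1}g_i\nu\in\calH_n$ is an eventual translation on $R_1$ and on $R_i$, and computing its action on far points in terms of the eventual restriction $f$ of $\nu$ to $R_1$ (resp. $R_i$) forces $f(p+b)=f(p)-1$ (resp. $f(p+b)=f(p)+1$) for large $p$; injectivity of $f$ together with $f(\bbn)\subseteq\bbn$ forces $|b|=1$, hence $f$ is eventually $p\mapsto p+\text{const}$. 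So $\nu$ is eventually a translation on every ray, and a bijection of $X_n$ with that property is by definition an element of $\calH_n$; thus $\nu\in\calH_n$, the original $\nu$ lies in $\Sigma_n\calH_n=\calH_n\Sigma_n$, and $N=\calH_n\rtimes\Sigma_n$. With Step 1 this gives $\aut(\calH_n)\cong\calH_n\rtimes\Sigma_n$.

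\emph{Main obstacle.} The real work is the normalizer computation, and the delicate point is the uniform control of the finite exceptional sets: passing from the dynamical statement ``cofinitely many forward orbits escape along one ray'' to the combinatorial statement ``$\nu$ permutes the rays at infinity'' requires keeping the various ``outside a finite set'' clauses compatible across the different $g^{(k)}$, and the closing functional-equation argument genuinely needs the interaction between injectivity of $\nu$ and the fact that it maps each ray into $\bbn$ in order to rule out every translation amount other than $\pm1$.
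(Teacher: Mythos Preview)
The paper does not prove this theorem; it is cited from \cite{BCMR} as a known result, so there is no proof in the paper to compare against. Your two-step strategy---identify $\aut(\calH_n)$ with $N_{\Sym_n}(\calH_n)$ via the characteristic subgroup $\FAlt_n$ and $\aut(\FAlt_n)=\Sym_n$, then compute the normalizer---is the standard one and is essentially what \cite{BCMR} does.

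Step~1 is correct. Step~2 has a genuine gap at exactly the spot you flag as delicate. First, your hypothesis on $g^{(k)}$ allows some coordinates of $\pi(g^{(k)})$ to vanish, and then entire rays are fixed pointwise, so the claim that ``all but finitely many $x\in X_n$'' have escaping forward orbit is simply false for $n\ge 3$. Second, and more seriously, you assert that $h_k=\nu g^{(k)}\nu^{-1}$ has a \emph{unique} positive entry in $\pi(h_k)$, but nothing proves this: conjugation by an arbitrary $\nu\in\Sym_n$ does not respect $\pi$, and the positive mass of $\pi(h_k)$ could a priori be spread over several rays (e.g.\ entries $1$ and $2$ in place of a single $3$). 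The passage from ``all infinite $g^{(k)}$-orbits end in $R_k$'' to ``all infinite $h_k$-orbits end in a single ray'' is precisely the content you are trying to establish. The clean repair is to argue globally rather than one $k$ at a time: the forward tails of the infinite $g^{(k)}$-orbits cover $R_k$ up to a finite set, so $(R_k)\nu^{-1}$ is, up to a finite set, the union of the target rays of $h_k$---in particular a nonempty union of rays. Running this over all $k$ produces $n$ pairwise disjoint nonempty unions of rays partitioning $X_n$; since there are only $n$ rays, each $(R_k)\nu^{-1}$ must be a single ray up to a finite set. This yields the permutation $\rho$ (automatically a bijection), and your functional-equation endgame then goes through as written.
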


Here are some other known results for the Houghton's groups $\calH_n$:
\begin{itemize}
\item K.~S.~Brown(\cite{B}) showed that $\calH_n$ has type $F_{n-1}$ but not $F_n$.
\item By \cite[Theorem~2.18]{SR}, $\calH_n$ is an amenable group.
\item R\"{o}ver \cite{Rover} showed that for all $n\ge1, r\ge2,m\ge1$,
$\calH_n$ embeds in Higman's groups $G_{r,m}$ (defined in \cite{Hig}), and in particular all Houghton's groups are subgroups of Thompson's group $V$.
\end{itemize}

\section{Growth rates for automorphisms of $\calH_n$}\label{sec:auto}

In this section, we will study the growth rates for automorphisms of the Houghton's groups $\calH_n$.
We refer to \cite{B78, FJL} for general information on growth rates for endomorphisms
of finitely generated groups. Since $\calH_1=\FSym_1$ is not finitely generated, we must assume $n\ge2$.

For $n\ge3$, we fix a generating set $\Gamma =  \{g_{2},\cdots,g_{n}\}$ for $\calH_n$. Let $|g|$ denote the word length $|g|_\Gamma$ of $g\in \calH_n$ with respect to $\Gamma$. With $g_1 =\mathrm{id}$, for any automorphism $\phi=\mu(\sigma\gamma)$ we have
\begin{align*}
&g_{i}^\sigma=g_{\sigma(1)}^{-1}g_{\sigma(i)} \text{ for $i=2,\cdots,n$},\\
&g_{i}^\gamma=g_{i}^*= \text{ an eventual $g_{i}$}.
\end{align*}
The second line follows from the fact that $\gamma$ is a word of $g_{j}^{\pm1}$'s
and an observation that $g_{j}^{-1}g_{i}g_{j}$ is the translation on $R_1\cup\{(1)\}\cup R_i$ by $1$.
Observe also that
\begin{enumerate}
\item $(g_{i}^*)^{-1}=(g_{i}^{-1})^*$,\ $(g_{i}^{\pm1}g_{j})^*=(g_{i}^*)^{\pm1}g_{j}^*$,
\item $(g_{i}^*)^\sigma=(g_{i}^\sigma)^*$,\ $(g_{i}^*)^\gamma=(g_{i}^\gamma)^*=g_{i}^*$.
\end{enumerate}
In particular, we have $\phi(g_{i})=\phi(g_{i}^*)=\phi(g_{i})^*$.
Thus we have
\begin{align*}
\phi^m(g_{i}^*)={g_{\sigma^m(1)}^*}^{\!\!\!-1}g_{\sigma^m(i)}^*.
\end{align*}
This shows that if $\sigma$ is of order $m$, then $\phi^m(g_{i})=\phi^m(g_{i}^*)=g_{i}^*$.
Remark also that if $\gamma=1$, then $\phi^{m}(g_{i})=g_{i}$ for all $2\le i\le n$, i.e., $\phi^m=\id$.
Hence $\GR(\phi^m)=1$ and so $\GR(\phi)=1$. In fact, we have:

\begin{Thm}
For any automorphism $\phi$ of $\calH_n$ $(n\ge3)$, we have $\GR(\phi)=1$.
\end{Thm}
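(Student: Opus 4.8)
The plan is to show that for an arbitrary automorphism $\phi$ of $\calH_n$ the word lengths $|\phi^k(g_i)|$ grow subexponentially in $k$, so that $\GR(\phi)=\lim_k(|\phi^k|)^{1/k}=1$. By Theorem~\ref{auto} we may write $\phi=\mu(\sigma\gamma)$ with $\sigma\in\Sigma_n$ and $\gamma\in\calH_n$, and the discussion preceding the statement already records the crucial structural facts: on the generators, conjugation by $\sigma$ permutes the $g_i$ (up to the correcting factor $g_{\sigma(1)}^{-1}$), while conjugation by $\gamma$ replaces $g_i$ by an ``eventual $g_i$'', i.e.\ an element $g_i^*$ that agrees with $g_i$ outside a finite set and satisfies $\phi(g_i)=\phi(g_i)^*=\phi(g_i^*)$. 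The key algebraic identity to exploit is
$$
\phi^m(g_i^*)={g_{\sigma^m(1)}^*}^{\!\!\!-1}g_{\sigma^m(i)}^*,
$$
so that if $m$ is the order of $\sigma$ then $\phi^m(g_i)=g_i^*$ for every $i$.

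First I would reduce to the case $\sigma=1$: since $\GR(\phi)=\GR(\phi^m)^{1/m}$, it suffices to bound $\GR(\phi^m)$, and $\phi^m=\mu(\gamma')$ for some $\gamma'\in\calH_n$ (concretely $\gamma'$ is the product of the conjugates of $\gamma$ dictated by the identity above). Thus we are reduced to proving that an \emph{inner} automorphism $\mu(\gamma)$ of $\calH_n$ has growth rate $1$. Second, for an inner automorphism, $\phi^k(g_i)=g_i^{\gamma^k}=\gamma^{-k}g_i\gamma^k$, and the naive bound gives $|\phi^k(g_i)|\le 2k|\gamma|+|g_i|$, which is \emph{linear} in $k$. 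Hence $(|\phi^k|)^{1/k}\le(2k|\gamma|+\max_i|g_i|)^{1/k}\to 1$ as $k\to\infty$, and by the effective formula $\GR(\phi)=\inf_k(|\phi^k|)^{1/k}$ quoted in the introduction we conclude $\GR(\phi^m)=1$, whence $\GR(\phi)=1$.

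The only remaining gap is the case $n=2$ is excluded by hypothesis ($n\ge3$), so the generating set $\Gamma=\{g_2,\dots,g_n\}$ and all the relations of Theorem~\ref{C} are available; and the reduction ``$\phi^m$ is inner'' must be justified carefully. For that I would argue directly: $\phi^m=\mu((\sigma\gamma)^m)$ and, since $\sigma^m=1$ in $\Sigma_n$, the element $(\sigma\gamma)^m$ lies in the normal subgroup $\calH_n\trianglelefteq\calH_n\rtimes\Sigma_n$, i.e.\ it \emph{is} an element of $\calH_n$, so $\phi^m$ is indeed inner. (Alternatively one can bypass $\phi^m$ entirely: combining $g_i^\sigma=g_{\sigma(1)}^{-1}g_{\sigma(i)}$ with $g_i^\gamma=g_i^*$ and the bounds $|g_i^*|$ bounded independently of anything, iterate to get $|\phi^k(g_i)|=O(k)$ directly.)

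I expect the main obstacle to be purely bookkeeping: controlling the finite sets $K_{g_i^*}$ and checking that ``eventual $g_i$'' behaves well under composition and conjugation, i.e.\ verifying the bulleted identities $(g_i^*)^{-1}=(g_i^{-1})^*$, $(g_i^{\pm1}g_j)^*=(g_i^*)^{\pm1}g_j^*$, $(g_i^*)^\sigma=(g_i^\sigma)^*$, $(g_i^*)^\gamma=g_i^*$ — these make the displayed formula for $\phi^m(g_i^*)$ legitimate. Once those are in hand the growth estimate is immediate. The conceptual point is simply that $\aut(\calH_n)$ is virtually inner (indeed $\aut(\calH_n)/\inn(\calH_n)\cong\Sigma_n$ is finite), and virtually inner automorphisms of any finitely generated group have growth rate $1$ because conjugation lengthens words only linearly.
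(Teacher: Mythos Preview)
Your proposal is correct and follows essentially the same route as the paper: reduce to an inner automorphism by passing to $\phi^m$ (where $m$ is the order of $\sigma$), then use the trivial linear bound $|\gamma^{-k}g_i\gamma^k|\le 1+2k|\gamma|$ to get $\GR(\phi^m)\le 1$, and invoke $\GR(\phi)^m=\GR(\phi^m)$. The paper's proof is exactly this, with the explicit computation $\phi^m=\mu(\gamma^{\sigma^{m-1}}\cdots\gamma^\sigma\gamma)$ matching your semidirect-product argument.

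One small point you glossed over: you assert $\GR(\phi^m)=1$ directly from the upper bound, but strictly speaking the linear estimate only gives $\GR(\phi^m)\le 1$. The paper closes this by citing \cite[Lemma~2.3]{FJL}: if $\GR(\phi)<1$ then $\phi$ is eventually trivial, which is impossible for an automorphism. (Equivalently and more elementarily, since $\phi$ is injective $|\phi^k|\ge 1$ for all $k$, so $\GR(\phi)=\inf_k(|\phi^k|)^{1/k}\ge 1$.) Your final paragraph about bookkeeping the ``eventual $g_i$'' identities is unnecessary for the proof itself---those identities feed into the displayed formula $\phi^m(g_i^*)={g_{\sigma^m(1)}^*}^{\!\!\!-1}g_{\sigma^m(i)}^*$ in the preamble, but the actual argument only needs that $\phi^m$ is inner, which you justified cleanly.
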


\begin{proof}
Let $|\gamma|=\ell$. Then we have $|\mu(\gamma)^k|\le 1+2\ell k$.
Indeed, since $\mu(\gamma)^k(g_{i})=\gamma^{-k}g_{i}\gamma^k$, we have
\begin{align*}
|\mu(\gamma)^k(g_{i})|&\le |\gamma^{-k}|+|g_i|+|\gamma^{k}|\\
&=1+2|\gamma^k|\le 1+2k|\gamma|=1+2k\ell.
\end{align*}
Thus we have $|\mu(\gamma)^k|\le 1+2k\ell$.
This implies that
$$
\GR(\mu(\gamma))=\lim_{k\to\infty}|\mu(\gamma)^k|^{1/k}
\le\lim_{k\to\infty}(1+2\ell k)^{1/k}=1.
$$

Let $\phi=\mu(\sigma\gamma)$ with $\sigma$ of order $m$.
Then
$$
\phi^m=\mu(\sigma\gamma)^m=\mu(\sigma^m\gamma^{\sigma^{m-1}}\cdots\gamma^\sigma\gamma)
=\mu(\gamma^{\sigma^{m-1}}\cdots\gamma^\sigma\gamma).
$$
Consequently, we have that
$$
\GR(\phi^m)=\GR(\mu(\gamma^{\sigma^{m-1}}\cdots\gamma^\sigma\gamma))\le1.
$$

Since $\GR(\phi)^m=\GR(\phi^m)$ (this follows from definition), we have $\GR(\phi)\le1$.
If $\GR(\phi)<1$, then by \cite[Lemma~2.3]{FJL}, $\phi$ is an eventually trivial endomorphism.
This is impossible because $\phi$ is an automorphism, and so we must have $\GR(\phi)=1$.
\end{proof}

Next, we consider $\calH_2$.
By Theorem~\ref{auto} again, every automorphism $\phi$ of $\calH_2$ is induced from the conjugation
by some element of $\sigma\gamma\in\calH_2\Sigma_2\subset\Sym_2$.
We remark that the proof of the above theorem relies on only the fact that automorphisms are induced from conjugations. This makes possible to repeat the above proof verbatim for the set $\Gamma=\{g_2,\alpha\}$ of generators for $\calH_2$. Therefore we have:

\begin{Thm}
For any automorphism $\phi$ of $\calH_2$, we have $\GR(\phi)=1$.
\end{Thm}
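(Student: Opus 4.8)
The plan is to mirror the argument just given for $\calH_n$ with $n\ge3$, exploiting the fact that the proof of the previous theorem never used finite presentability or the specific relations among the $g_i$: it used only that every automorphism of $\calH_n$ is realized as conjugation by an element $\sigma\gamma$ with $\sigma\in\Sigma_n$ a ray permutation of finite order and $\gamma\in\calH_n$. For $n=2$ the group $\Sigma_2$ has order $2$, so $\sigma$ has order $m\le2$, and Theorem~\ref{auto} tells us every automorphism $\phi$ of $\calH_2$ has the form $\phi=\mu(\sigma\gamma)$ with $\sigma\gamma\in\calH_2\rtimes\Sigma_2\subset\Sym_2$. Fix the generating set $\Gamma=\{g_2,\alpha\}$ for $\calH_2$.

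First I would establish the word-length bound for an inner automorphism. For $\gamma\in\calH_2$ with $|\gamma|_\Gamma=\ell$, conjugation satisfies $\mu(\gamma)^k(g)=\gamma^{-k}g\gamma^k$, so for each generator $a\in\Gamma$ we get $|\mu(\gamma)^k(a)|\le 2k\ell+|a|\le 2k\ell+1$, hence $|\mu(\gamma)^k|\le 2k\ell+1$ and therefore
$$
\GR(\mu(\gamma))=\lim_{k\to\infty}|\mu(\gamma)^k|^{1/k}\le\lim_{k\to\infty}(2k\ell+1)^{1/k}=1.
$$
This step is identical to the $n\ge3$ case and requires only that $\calH_2$ is finitely generated, which it is. Next, writing $\phi=\mu(\sigma\gamma)$ with $\sigma$ of order $m$, the same expansion
$$
\phi^m=\mu(\sigma\gamma)^m=\mu\bigl(\sigma^m\,\gamma^{\sigma^{m-1}}\cdots\gamma^\sigma\gamma\bigr)=\mu\bigl(\gamma^{\sigma^{m-1}}\cdots\gamma^\sigma\gamma\bigr)
$$
shows $\phi^m$ is again an inner automorphism of $\calH_2$ (the conjugating element $\gamma^{\sigma^{m-1}}\cdots\gamma^\sigma\gamma$ lies in $\calH_2$ since each $\gamma^{\sigma^j}\in\calH_2$). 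By the previous paragraph $\GR(\phi^m)\le1$, and since $\GR(\phi)^m=\GR(\phi^m)$ we conclude $\GR(\phi)\le1$. Finally, if $\GR(\phi)<1$ then by \cite[Lemma~2.3]{FJL} $\phi$ would be eventually trivial, contradicting that $\phi$ is an automorphism; hence $\GR(\phi)=1$.

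The one point that needs a sentence of care—and the only genuine obstacle—is checking that $\gamma^{\sigma^j}$ really is an element of $\calH_2$ rather than just of $\Sym_2$, so that $\phi^m$ is honestly inner in $\calH_2$ and the length bound applies with respect to $\Gamma$. This is immediate because $\Sigma_2$ normalizes $\calH_2$ inside $\Sym_2$ (the ray-swap conjugates eventual translations to eventual translations), so conjugating $\gamma\in\calH_2$ by the ray permutation $\sigma$ stays in $\calH_2$. With that observation the argument for $n\ge3$ transfers verbatim, as the excerpt indicates, and the proof is complete.
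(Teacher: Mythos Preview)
Your proposal is correct and follows essentially the same approach as the paper, which simply remarks that the proof for $n\ge3$ relies only on automorphisms being conjugations by elements of $\calH_n\rtimes\Sigma_n$ and hence carries over verbatim to $\calH_2$ with the generating set $\Gamma=\{g_2,\alpha\}$. Your extra sentence verifying that $\gamma^{\sigma^j}\in\calH_2$ (since $\Sigma_2$ normalizes $\calH_2$) is a welcome clarification that the paper leaves implicit.
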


\section{Endomorphisms of $\calH_n$ which are not monic}\label{sec:not monic}

We recall the following results.

\begin{Thm}[{\cite[Theorem~8.1A]{DM}}]\label{thm:dm}
Let $\Omega$ be any set with $|\Omega|>4$.
Then the nontrivial normal subgroups of $\Sym(\Omega)$ are precisely:
$\FAlt(\Omega)$ and the subgroups of the form
$\Sym(\Omega,c)$ with $\aleph_0\le c\le |\Omega|$.
Here,
$$
\Sym(\Omega,c):=\{x\in\Sym(\Omega)\mid|\supp(x)|<c\}.
$$
In particular, $\Sym(\Omega,\aleph_0)=\FSym(\Omega)$, and so the nontrivial normal subgroups of $\Sym_n$ are
precisely $\FAlt_n$ and $\FSym_n$.
\end{Thm}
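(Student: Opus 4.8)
This is the classical Schreier--Ulam--Baer description of the normal subgroups of a full symmetric group, and the plan is to carry out (a streamlined version of) that argument in two stages: first show that every nontrivial normal subgroup $N\trianglelefteq\Sym(\Omega)$ contains $\FAlt(\Omega)$, then show that the normal subgroups sandwiched between $\FAlt(\Omega)$ and $\Sym(\Omega)$ are completely controlled by the single cardinal invariant ``size of the largest support occurring in $N$'' and are exactly the groups $\Sym(\Omega,c)$. The finite case $5\le|\Omega|<\aleph_0$ is just the classical fact that the only normal subgroups of $S_m$ $(m\ge5)$ are $1$, $\Alt_m$ and $S_m$ --- and then there is no cardinal $c$ with $\aleph_0\le c\le|\Omega|$ while $\FAlt(\Omega)=\Alt_m$, so the assertion reads correctly --- so from now on I take $\Omega$ infinite of cardinality $\kappa$ and fix $1\ne N\trianglelefteq\Sym(\Omega)$.

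For Stage 1 I would pick $1\ne\sigma\in N$ and a point $\alpha$ with $\alpha\sigma\ne\alpha$. A short case distinction disposes of the degenerate possibility that $\sigma$ is a transposition (then normality already forces every transposition, hence $\FSym(\Omega)\supseteq\FAlt(\Omega)$, into $N$). Otherwise I choose a $3$-cycle $\tau$ supported on $\{\alpha,\alpha\sigma,\gamma\}$ for a suitably chosen third point $\gamma$ (available since $|\Omega|>4$) that does not commute with $\sigma$; then $[\tau,\sigma]\in N$ by normality, and it equals a product of two $3$-cycles, hence is a nontrivial even permutation of finite support, so $N\cap\FAlt(\Omega)\ne 1$. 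Since $\FAlt(\Omega)=\bigcup\{\Alt(\Delta):\Delta\subseteq\Omega\text{ finite}\}$ is simple (a nontrivial normal subgroup meets some $\Alt(\Delta)$ with $|\Delta|\ge5$ nontrivially, hence by simplicity of $\Alt(\Delta)$ contains it, and then contains $\Alt(\Delta')$ for all larger finite $\Delta'$ on conjugating), I conclude $N\cap\FAlt(\Omega)=\FAlt(\Omega)$, i.e. $\FAlt(\Omega)\le N$.

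For Stage 2, assume $\FAlt(\Omega)\le N$ and set $c:=\sup\{|\supp(x)|^{+}:x\in N\}$, a cardinal with $\aleph_0\le c\le\kappa^{+}$ satisfying $N\subseteq\Sym(\Omega,c)$ automatically. Everything then hinges on the \emph{Key Lemma}: if $\FAlt(\Omega)\le N\trianglelefteq\Sym(\Omega)$ and $N$ contains some $g$ with $\mu:=|\supp(g)|$ infinite, then $N$ contains \emph{every} permutation whose support has cardinality $\le\mu$. Granting this, the classification follows by cases on $c$: if $c=\aleph_0$ then $\FAlt(\Omega)\le N\le\FSym(\Omega)$ and the index-$2$ inclusion forces $N=\FAlt(\Omega)$ or $N=\FSym(\Omega)=\Sym(\Omega,\aleph_0)$; if $c>\aleph_0$ then for each cardinal $\mu<c$ there is $g\in N$ of infinite support of cardinality $\ge\mu$, the Key Lemma pulls all permutations of support $\le\mu$ into $N$, and as $\mu$ ranges over all cardinals $<c$ this gives $\Sym(\Omega,c)\subseteq N$, hence $N=\Sym(\Omega,c)$. (The value $c=\kappa^{+}$ corresponds to $N=\Sym(\Omega)$ itself; the listed cases are the proper ones, $\aleph_0\le c\le\kappa$, together with $\FAlt(\Omega)$.) Specializing to the countable set $\Omega=X_n$ forces $c=\aleph_0$, so the only nontrivial proper normal subgroups of $\Sym_n$ are $\FAlt_n$ and $\FSym_n=\Sym(X_n,\aleph_0)$, which is the final assertion.

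I expect the Key Lemma to be the only real work. I would attack it in two moves. \emph{Normalization:} using $\FAlt(\Omega)\le N$ and that $\mu$ is infinite, repeatedly form commutators $[g,\rho]$ with $\rho$ a finitary even permutation whose support is planted inside long (or infinite) cycles of $g$ so as to break them up, and multiply through by further finitary even permutations, in order to produce inside $N$ a permutation $t$ of standardized shape on a $\mu$-set --- e.g.\ a product of $\mu$ pairwise disjoint transpositions --- while keeping at least $\mu$ points fixed as working room; the organizing principle is a transfinite induction on the cycle type of $g$. \emph{Synthesis:} given such a $t\in N$ and any target $h$ with $|\supp(h)|\le\mu$, realize $h$ modulo $\FAlt(\Omega)$ as a product of a bounded number of conjugates of $t^{\pm1}$, by embedding $\supp(h)$ together with an auxiliary ``shift register'' of $\mu$ further points into one $\mu$-set and invoking the infinite analogue of the classical fact that, with sufficient free space, every permutation is a short product of conjugates of a fixed product of infinitely many disjoint transpositions. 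As conjugates of $t^{\pm1}$ and elements of $\FAlt(\Omega)$ all lie in $N$, this yields $h\in N$. The genuinely delicate points all sit in the normalization move: cycles of infinite length; the case $|\supp(g)|=|\Omega|$, where one cannot move $\supp(g)$ off itself and must instead split $\Omega$ into three pieces of size $\kappa$; and singular $\mu$. The companion remark in the finite-support regime --- that adjoining any odd finitary permutation to $\FAlt(\Omega)$ yields $\FSym(\Omega)$ --- is elementary.
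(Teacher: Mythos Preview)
The paper does not prove this theorem at all: it is quoted verbatim from Dixon--Mortimer \cite[Theorem~8.1A]{DM} as background, with no argument supplied. So there is nothing in the paper to compare your proposal against.

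That said, your outline is the standard Schreier--Ulam--Baer route and is broadly correct. Stage~1 is fine. In Stage~2 your definition of $c$ and the case split are the right shape, and you are honest that the Key Lemma is where the content lies. One caution: your ``normalization'' sketch glosses over the hardest part, namely producing from an arbitrary $g$ with infinite support $\mu$ a standardized element (such as a product of $\mu$ disjoint transpositions) while remaining inside $N$. Commutators with finitary even permutations only perturb $g$ on finite sets, so a transfinite induction on cycle type is not quite the right organizing principle; one typically needs to conjugate $g$ by a permutation $\pi$ chosen so that $g$ and $g^\pi$ overlap on a set of size $\mu$ arranged to make $g^{-1}g^\pi$ have the desired shape. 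If you want to actually write this up, that step deserves a precise construction rather than a gesture at ``repeated commutators.'' The synthesis step is more routine once you have such a $t$.
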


\begin{Thm}[{\cite[11.3.3]{Scott}}]\label{thm:scott}
The nontrivial normal subgroups of $\Sym(\Omega,B)$ are the groups $\Sym(\Omega,D)$ with $\aleph_0\le D<B\le2^{|\Omega|}$ and $\FAlt(\Omega)$.
In particular, we have:
\begin{enumerate}
\item[$(1)$] The only nontrivial normal subgroup of $\FSym_n$ is $\FAlt_n$ $($when $B=\aleph_0$$)$.
\item[$(2)$] The only nontrivial normal subgroups of $\Sym_n$ are $\FSym_n$ and $\FAlt_n$ $($when $B=2^{\aleph_0}$$)$.
\end{enumerate}
\end{Thm}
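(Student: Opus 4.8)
This is the classical Baer--Schreier--Ulam description of the normal subgroups of a symmetric group (see also \cite[Theorem~8.1A]{DM}), and since the paper merely quotes \cite[11.3.3]{Scott} no proof is strictly needed; but here is how I would argue it. The plan has two halves: show that each subgroup in the list is normal, and show there are no others. The first half is routine: from $\supp(g^{-1}xg)=\supp(x)\cdot g$ one sees that conjugation preserves the cardinality of the support, and with $\supp(xy)\subseteq\supp(x)\cup\supp(y)$ and $\supp(x^{-1})=\supp(x)$ this makes $\Sym(\Omega,D)$ a normal subgroup of $\Sym(\Omega,B)$ for every infinite $D\le B$; and $\FAlt(\Omega)$ is normal since a conjugate of an even permutation of finite support is again one. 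I would also record that $\FAlt(\Omega)$ is simple, being the direct limit of the simple groups $\Alt_k$, $k\ge5$, along the natural inclusions.

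For the second half, let $1\ne N\trianglelefteq\Sym(\Omega,B)$. The first step is to show $\FAlt(\Omega)\le N$: take $x\in N$ with $x\ne1$, choose a $3$-cycle $\sigma$ not commuting with $x$ (possible because $|\Omega|$ is infinite), and observe that $(\sigma^{x})^{-1}\sigma\in N$ is a product of two $3$-cycles, hence a nontrivial element of $\FAlt(\Omega)$; then $N\cap\FAlt(\Omega)$ is a nontrivial normal subgroup of the simple group $\FAlt(\Omega)$, so $\FAlt(\Omega)\le N$.

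The crux is Baer's lemma: if $x$ moves an infinite set of cardinality $m$, then the normal closure $\llangle x\rrangle$ of $x$ in $\Sym(\Omega)$ is precisely $\Sym(\Omega,m^+)=\{g:|\supp(g)|\le m\}$. Granting it, I would finish by a trichotomy. If every element of $N$ has finite support then $N\subseteq\FSym(\Omega)$, and since $\FSym(\Omega)/\FAlt(\Omega)\cong\bbz/2$ and $\FAlt(\Omega)\le N$, either $N=\FAlt(\Omega)$ or $N=\FSym(\Omega)=\Sym(\Omega,\aleph_0)$. Otherwise put $m=\sup\{|\supp(x)|:x\in N\}$: if this supremum is attained at some $x_0\in N$, then $\Sym(\Omega,m^+)=\llangle x_0\rrangle\subseteq N\subseteq\Sym(\Omega,m^+)$, so $N=\Sym(\Omega,m^+)$; if it is not attained, then $m$ is a limit cardinal, $N\subseteq\Sym(\Omega,m)$, and every $g$ with $|\supp(g)|<m$ lies in $\llangle x\rrangle$ for a suitable $x\in N$, so $N=\Sym(\Omega,m)$. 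In all cases $N$ is $\FAlt(\Omega)$ or $\Sym(\Omega,D)$ with $\aleph_0\le D$, and $D<B$ because $N$ is proper. The two corollaries then follow: for $\FSym_n=\Sym(X_n,\aleph_0)$ the range $\aleph_0\le D<\aleph_0$ is empty, leaving only $\FAlt_n$; and since $|X_n|=\aleph_0$ forces every support to have size at most $\aleph_0$, $\Sym(X_n,D)$ equals $\FSym_n$ for $D=\aleph_0$ and equals $\Sym_n$ for $D>\aleph_0$, so the proper nontrivial normal subgroups of $\Sym_n$ are exactly $\FSym_n$ and $\FAlt_n$.

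I expect Baer's lemma to be the real obstacle; note that for this paper only $\Omega=X_n$ (countably infinite) is relevant, where the lemma simplifies to the statement that any permutation of infinite support normally generates all of $\Sym(X_n)$. To prove it I would proceed in three moves: first, manufacture inside $\llangle x\rrangle$ a permutation $z$ that is a disjoint product of exactly $m$ transpositions, via a bounded product of conjugates of $x^{\pm1}$ and a short case split on the cycle type of $x$ on its support (multiply $x$ by a suitable conjugate when $x$ has many finite cycles; slide the support onto a disjoint copy and multiply when $x$ has an infinite cycle); second, write an arbitrary $g$ with $|\supp(g)|\le m$ as a product of two involutions, each itself a disjoint product of at most $m$ transpositions, using that an infinite cycle, like a finite cycle, is a product of two ``reflections''; third, express any disjoint product of at most $m$ transpositions as a product of two conjugates of $z$ by padding it up to exactly $m$ transpositions (possible since $2m\le|\Omega|$), both the padded permutation and the padding then being conjugate to $z$. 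Composing these three factorizations gives $g\in\llangle z\rrangle\subseteq\llangle x\rrangle$. The delicate points are this ``product of conjugates'' bookkeeping in the first move and the separate handling of $B$ a successor versus a limit cardinal in the trichotomy.
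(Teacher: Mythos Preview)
The paper does not prove this theorem at all; it is quoted verbatim from \cite[11.3.3]{Scott} and used as a black box. You correctly recognized this in your opening sentence. Your sketch is the standard Baer--Schreier--Ulam argument and is essentially correct as an outline: the containment $\FAlt(\Omega)\le N$ via a commutator with a $3$-cycle, the trichotomy on $\sup\{|\supp(x)|:x\in N\}$, and the reduction to Baer's lemma are all the right moves, and your derivation of the two corollaries from the general statement is clean. Since there is no proof in the paper to compare against, there is nothing further to say.
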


\begin{Cor}\label{cor:normal subgroup}
If $N$ is a nontrivial normal subgroup of $\calH_n$, then
$N\bigcap\FSym_n$ is either $\FAlt_n$ or $\FSym_n$. In particular $N$ contains $\FAlt_n$.
\end{Cor}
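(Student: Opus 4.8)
The plan is to intersect $N$ with the normal subgroup $\FSym_n \trianglelefteq \calH_n$ and apply the classification of normal subgroups of $\FSym_n$ (Theorem \ref{thm:scott}(1)), then separately rule out the possibility that the intersection is trivial. First I would observe that $N \cap \FSym_n$ is normal in $\calH_n$, hence in particular normal in $\FSym_n$; so by Theorem \ref{thm:scott}(1) it is one of $\{1\}$, $\FAlt_n$, or $\FSym_n$. To finish, I must exclude $N \cap \FSym_n = \{1\}$ under the assumption $N \neq \{1\}$.

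For that step, suppose $N \cap \FSym_n = \{1\}$. Since $\FSym_n$ is normal in $\calH_n$, for any $x \in N$ and any $f \in \FSym_n$ the commutator $[x,f] = x f x^{-1} f^{-1}$ lies in $N \cap \FSym_n = \{1\}$ (it is in $\FSym_n$ because $\FSym_n$ is normal, and in $N$ because $N$ is normal, using $x^{-1}(f x f^{-1}) \in N$). Hence every element of $N$ centralizes $\FSym_n$. But the centralizer of $\FSym_n$ inside $\calH_n$ — indeed inside all of $\Sym_n = \Sym(X_n)$ — is trivial: $\FSym_n$ acts on the infinite set $X_n$ with trivial pointwise stabilizers of no point being fixed by the whole group, and a standard argument shows that an eventual translation $g$ commuting with every transposition $(a\,b) \in \FSym_n$ must satisfy $a \cdot g = a$ for all $a \in X_n$, i.e. $g = \id$. (Concretely, if $a \cdot g = b \neq a$, pick $c \notin \{a,b\}$; then $g$ and the transposition $(a\,c)$ fail to commute.) Therefore $N = \{1\}$, contradicting $N \neq \{1\}$. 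So $N \cap \FSym_n$ is $\FAlt_n$ or $\FSym_n$, and since $\FAlt_n \subseteq N \cap \FSym_n \subseteq N$ in either case, $N$ contains $\FAlt_n$.

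The one point requiring a little care — the ``main obstacle'' in an otherwise short argument — is the verification that $C_{\calH_n}(\FSym_n) = \{1\}$, i.e. that no nontrivial eventual translation commutes with all finitely supported permutations; this is exactly the computation sketched above with a transposition $(a\,c)$ moving a point that $g$ would otherwise have to fix. Everything else is a direct application of Theorem \ref{thm:scott} together with the normality of $\FSym_n$ in $\calH_n$ recorded in Section \ref{sec:Houghton}.
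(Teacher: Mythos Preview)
Your proof is correct, and it takes a genuinely different route from the paper in the key step ruling out $N\cap\FSym_n=\{1\}$.

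The paper argues as follows: if $N\cap\FSym_n=\{1\}$ then $\pi$ embeds $N$ into $\bbz^{n-1}$, so $N$ contains an element $g$ of infinite order; choosing a point $x$ with infinite $g$-orbit and a transposition $\beta$ swapping two points of that orbit, one computes explicitly that $[\beta,g]\in N$ is a $3$-cycle or a product of two disjoint transpositions, hence a nontrivial torsion element of $N\cap\FSym_n$, contradiction.

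Your argument instead observes that $[x,f]\in N\cap\FSym_n=\{1\}$ for all $x\in N$ and $f\in\FSym_n$, so $N\subset C_{\calH_n}(\FSym_n)$, and then verifies $C_{\Sym_n}(\FSym_n)=\{1\}$ by the standard transposition trick. This is cleaner and more conceptual: it avoids producing an explicit torsion element and never uses the quotient $\bbz^{n-1}$ or the existence of an infinite-order element. The paper's approach, on the other hand, yields a concrete element (a $3$-cycle) of $N\cap\FSym_n$, which is in the spirit of the hands-on cycle computations used later in the paper. Either argument is perfectly adequate here.
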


\begin{proof}
If $N$ is a normal subgroup of $\calH_n$, then $N\bigcap\FSym_n$ is a normal subgroup of $\FSym_n$.
By Part (1) of Theorem~\ref{thm:scott}, $N\bigcap\FSym_n$ is either $1$, $\FAlt_n$ or $\FSym_n$.

If $N\bigcap\FSym_n=\FAlt_n$, then $\FAlt_n\subset N$;
if $N\bigcap\FSym_n=\FSym_n$, then $\FAlt_n\subset\FSym_n\subset N$.

We now consider the case $N\bigcap\FSym_n=1$. This implies that the surjection $\calH_n\to\bbz^{n-1}$ maps
$N$ isomorphically into $\bbz^{n-1}$.
If $N\ne 1$, then $N$ contains an element $g\in\calH_n$ of infinite order. There exists a point $x\in\supp(g)$ whose orbit under $g$ is infinite.
Observe that for any transposition $\beta$ exchanging two points in the orbit of $x$, $[\beta,g]=g^\beta g^{-1}\in N$ is a $3$-cycle or a product of two commuting transpositions.
In fact, if $\beta$ exchanges two consecutive points $x$ and $(x)g$ in $\supp(g)$,
then $[\beta,g]=((x)g^{-1},x,(x)g)$ is a $3$-cycle;
if $\beta$ exchanges two non-consecutive points $x$ and $y$ in $\supp(g)$,
then we can show easily that $[\beta,g]=(x,y)((x)g^{-1},(y)g^{-1})$ is a product of two commuting transpositions.
In any case, $N$ contains a non-trivial torsion element, and so $N\bigcap \FSym_n \neq 1$. Consequently, $N=1$ if $N\bigcap\FSym_n=1$. This completes the proof.
\end{proof}

\begin{Cor}[\cite{CGP}]
The Houghton's group $\calH_n$ is not residually finite.
\end{Cor}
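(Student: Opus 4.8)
The statement to prove is the corollary that $\calH_n$ is not residually finite. The plan is to leverage Corollary~\ref{cor:normal subgroup}, which shows that every nontrivial normal subgroup of $\calH_n$ contains $\FAlt_n$. Recall that a group $G$ is residually finite precisely when the intersection of all its finite-index normal subgroups is trivial; equivalently, for every nontrivial $g \in G$ there is a finite quotient of $G$ in which the image of $g$ is nontrivial. I would therefore argue by contradiction: suppose $\calH_n$ were residually finite, and pick any nontrivial element $g$ lying in $\FAlt_n$ (for instance a $3$-cycle, which exists since $X_n$ is infinite and $n\ge1$).

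First I would invoke residual finiteness to produce a normal subgroup $N \trianglelefteq \calH_n$ of finite index with $g \notin N$; in particular $N$ is nontrivial. By Corollary~\ref{cor:normal subgroup}, $N \supseteq \FAlt_n$, and hence $g \in \FAlt_n \subseteq N$, contradicting the choice of $N$. This already gives the result. An essentially equivalent way to phrase it, which I might use instead for cleanliness, is to observe that the intersection of all finite-index normal subgroups of $\calH_n$ contains $\FAlt_n$ (each such subgroup being nontrivial, hence containing $\FAlt_n$ by the corollary), so this intersection is nontrivial; therefore $\calH_n$ is not residually finite. One should only note the trivial edge case: if $\calH_n$ itself had no proper finite-index subgroup the claim is immediate, and otherwise any proper finite-index normal subgroup is a fortiori nontrivial, so the corollary applies.

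There is essentially no obstacle here; the work has all been done in Corollary~\ref{cor:normal subgroup}. The only point requiring a word of care is that one must apply the corollary to a \emph{nontrivial} normal subgroup, so one should make sure the finite-index normal subgroup produced (or intersected over) is genuinely nontrivial — which holds as long as $\calH_n$ is infinite, and $\calH_n$ is infinite since it surjects onto $\bbz^{n-1}$ for $n\ge 2$ and contains $\FSym_1$ for $n=1$. Thus the proof is a two-line deduction from the preceding corollary, and I would present it as such.
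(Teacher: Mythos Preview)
Your proposal is correct and follows essentially the same argument as the paper: assume residual finiteness, pick a nontrivial element of $\FAlt_n$, obtain a finite-index normal subgroup $N$ avoiding it, and then apply Corollary~\ref{cor:normal subgroup} to conclude $\FAlt_n\subset N$, a contradiction. The paper's proof is just this two-line deduction, with the same observation that $N$ is nontrivial because $\calH_n$ is infinite.
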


\begin{proof}
Suppose that $\calH_n$ is residually finite. By definition, for any $x\in\calH_n-\{1\}$
there exists a homomorphism $\phi$ from $\calH_n$ to a finite group such that $\phi(x)\ne1$.
Equivalently, for any $x\in\calH_n-\{1\}$ there exists a finite index normal subgroup $N_x$ of $\calH_n$ such that $x\notin N_x$. Since $N_x\ne1$, Corollary~\ref{cor:normal subgroup} implies that $\FAlt_n\subset N_x$ for all $x\in\calH_n-\{1\}$; in particular, for $x\in\FAlt_n-\{1\}$. This is a contradiction.
\end{proof}

\begin{Rmk}
The Houghton's group $\calH_n$ is not co-Hopfian for $n\geq 3$ (\cite{BCMR}).
That is, there is an injection which is not an isomorphism.
Indeed, $\phi: g_i\mapsto g_i^2$, $2\leq i\leq n$, defines such an injection. It is direct to check that $\phi(r)=1$ for all relators $r$ of the presentation in Theorem \ref{C}. For the injectivity of $\phi$ one can realize $\phi(\calH_n)$ as disjoint union of two subgroups each of which is isomorphic to $\calH_n$. (See Example \ref{ex:mono})
\end{Rmk}
\begin{Rmk}\label{rmk:central extension}
Recall that for a short exact sequence of groups $0\to A\to G\to Q\to 1$ with $A$ abelian, the following are equivalent:
\begin{enumerate}
\item $Q$ acts trivially on $A$.
\item $A$ lies in the center of $G$.
\end{enumerate}
In this case, we say that the short exact sequence is \emph{central}. Remark that if $A\cong\bbz_2$ then the exact sequence is always central. In fact, for any $g\in G$ and $a\ne1$ in $A$, we must have that $gag^{-1}=a$.
\end{Rmk}

\begin{Cor}[\cite{CGP}]\label{cor:Hopfian}
The Houghton's group $\calH_n$ is Hopfian.
\end{Cor}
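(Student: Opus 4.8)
The plan is to prove that every surjective endomorphism $\phi\colon\calH_n\to\calH_n$ is injective, and hence an automorphism. Essentially all the work has already been done: the key input is Corollary~\ref{cor:normal subgroup}, to the effect that every nontrivial normal subgroup of $\calH_n$ contains $\FAlt_n$.

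First I would suppose, toward a contradiction, that $\ker\phi\neq 1$. Since $\ker\phi$ is a normal subgroup of $\calH_n$, Corollary~\ref{cor:normal subgroup} gives $\FAlt_n\subseteq\ker\phi$, so $\phi$ descends to a \emph{surjection} $\bar\phi\colon\calH_n/\FAlt_n\twoheadrightarrow\calH_n$. The reason for passing to this quotient is that $\calH_n/\FAlt_n$ has \emph{finite} derived subgroup, whereas $\calH_n$ does not. Indeed, $\calH_n'=\FSym_n$ for $n\ge3$ (it is the kernel in $1\to\FSym_n\to\calH_n\to\bbz^{n-1}\to1$), so
$$
(\calH_n/\FAlt_n)'=\calH_n'\,\FAlt_n/\FAlt_n=\FSym_n/\FAlt_n\cong\bbz_2,
$$
while for $n=1,2$ one has $\calH_n'=\FAlt_n$ and hence $(\calH_n/\FAlt_n)'=1$; in every case it is finite. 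A surjective homomorphism carries derived subgroup onto derived subgroup, so $\bar\phi$ would map the finite group $(\calH_n/\FAlt_n)'$ onto $\calH_n'$, which is the infinite group $\FSym_n$ (respectively $\FAlt_n$). This contradiction forces $\ker\phi=1$; a bijective endomorphism is an automorphism, so $\calH_n$ is Hopfian.

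I do not anticipate a real obstacle here: once Corollary~\ref{cor:normal subgroup} is available the argument is just the observation that a group cannot be a proper homomorphic image of a ``smaller'' group. The only points needing a moment's attention are the cases $n=1,2$, where $\calH_n'$ is $\FAlt_n$ rather than $\FSym_n$ and so must be recorded separately (as above), and the routine identification $\FSym_n/\FAlt_n\cong\bbz_2$ via the sign homomorphism.

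A variant that sidesteps the quotient computation uses simplicity of $\FAlt_n$ directly: since $\phi$ is onto, it maps the second derived subgroup $\calH_n''=\FAlt_n$ onto $\calH_n''=\FAlt_n$, so $\phi|_{\FAlt_n}$ is a surjective endomorphism of the infinite simple group $\FAlt_n$, hence injective; thus $\ker\phi\cap\FAlt_n=1$, which together with the inclusion $\FAlt_n\subseteq\ker\phi$ (forced by Corollary~\ref{cor:normal subgroup} when $\ker\phi\neq1$) again yields $\ker\phi=1$. Both routes are short; I would present the first.
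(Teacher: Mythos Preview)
Your argument is correct, and it is genuinely different from the paper's. Both proofs start from Corollary~\ref{cor:normal subgroup} to get $\FAlt_n\subseteq\ker\phi$ whenever $\ker\phi\ne1$, but then diverge. The paper performs a case split on whether $\ker\phi\cap\FSym_n$ equals $\FSym_n$ or $\FAlt_n$: in the first case $\calH_n$ would be a quotient of the abelian group $\bbz^{n-1}$; in the second, the paper observes that $\calH_n\cong\calH_n/\ker\phi$ sits in a central extension $1\to\bbz_2\to\calH_n/\ker\phi\to\bbz^{n-1}/\text{(image)}\to1$ and hence has nontrivial center, contradicting a separately proved Lemma~\ref{lemma:centerless} that $\calH_n$ is centerless. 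Your route bypasses both the case split and the centerless lemma: once $\FAlt_n\subseteq\ker\phi$, the surjection factors through $\calH_n/\FAlt_n$, whose commutator subgroup is finite (either $\bbz_2$ or trivial), and this cannot surject onto the infinite group $\calH_n'$. This is shorter and more self-contained; the paper's approach, on the other hand, yields Lemma~\ref{lemma:centerless} as a byproduct, though that lemma is not used elsewhere. Your variant via simplicity of $\FAlt_n$ is also correct and arguably the cleanest of the three.
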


\begin{proof}
Let $\phi:\calH_n\to\calH_n$ be an epimorphism, but not an isomorphism.
Put $K=\ker(\phi)$; then $K\ne1$, $\calH_n/K\cong\calH_n$
and from the proof of Corollary~\ref{cor:normal subgroup}, $K\bigcap\FSym_n$ is either $\FAlt_n$ or $\FSym_n$.

If $K\bigcap\FSym_n=\FSym_n$, then $\FSym_n\subset K$ and thus
we have
$$
\calH_n/\FSym_n(\cong\bbz^{n-1})\ -\!\!\!\twoheadrightarrow \calH_n/K\overset\cong\lra\calH_n.
$$
This implies that $\calH_n$ is abelian, which is absurd.

Consider next the case $K\bigcap\FSym_n=\FAlt_n$. Then we have a commuting diagram
$$
\CD
@.1@.1@.1\\
@.@AAA@AAA@AAA\\
1@>>>\FSym_n/K\cap \FSym_n@>>>\calH_n/K@>>>\calH_n/K\cdot\FSym_n@>>>1\\
@.@AAA@AAA@AAA\\
1@>>>\FSym_n@>>>\calH_n@>>>\bbz^{n-1}@>>>1\\
@.@AAA@AAA@AAA\\
1@>>>K\cap \FSym_n@>>>K@>>>K/K\cap \FSym_n@>>>1\\
@.@AAA@AAA@AAA\\
@.1@.1@.1
\endCD
$$
Since $\FSym_n/K\cap \FSym_n=\FSym_n/\FAlt_n\cong\bbz_2$, the top row exact sequence is a central extension by Remark~\ref{rmk:central extension}. This implies that $\calH_n/K\cong\calH_n$ has a nontrivial center.
But Lemma~\ref{lemma:centerless} below shows that $\calH_n$ has trivial center.
\end{proof}

\begin{Lemma}\label{lemma:centerless}
For all $n$, the center of $\calH_n$ is trivial.
\end{Lemma}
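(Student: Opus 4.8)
The plan is to prove that every element $z$ of the center of $\calH_n$ fixes each point of $X_n$, so that $z=\id$. The mechanism is that $\calH_n$ contains \emph{every} transposition of $X_n$: for distinct points $a,b\in X_n$ the transposition $(a,b)$ has the finite support $\{a,b\}$ and acts as translation by $0$ on every ray off this finite set, hence satisfies the defining condition of an eventual translation; equivalently, $\FSym_n\subseteq\calH_n$.

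First I would record how conjugation acts on a transposition. With the paper's right-action convention $h^z=z^{-1}hz$, a direct computation gives $(a,b)^z=(a\cdot z,\ b\cdot z)$. If $z$ is central it commutes with $(a,b)$, so $(a\cdot z,\ b\cdot z)=(a,b)$ as permutations; since $z$ is injective the two sides are transpositions, and equality forces the equality of two-element sets $\{a\cdot z,\ b\cdot z\}=\{a,b\}$.

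Next I would use that $X_n=\{1,\cdots,n\}\times\bbn$ is infinite. Fix an arbitrary point $a\in X_n$ and choose two further points $b,c$ with $a,b,c$ pairwise distinct. Applying the previous step to the pairs $\{a,b\}$ and $\{a,c\}$ yields $a\cdot z\in\{a,b\}$ and $a\cdot z\in\{a,c\}$; since $b\ne c$, the intersection of these sets is $\{a\}$, so $a\cdot z=a$. As $a$ was arbitrary, $z=\id$, which is the claim.

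I do not expect a genuine obstacle: the argument is elementary and uniform in $n\ge1$. The only step deserving an explicit sentence is the observation that all transpositions lie in $\calH_n$, which is immediate from the definition of eventual translation. One could instead split the proof according to whether $\supp(z)$ is finite — in which case $z$ centralizes $\FSym_n$ and vanishes by centerlessness of the symmetric group on an infinite set — or infinite, where some translation component $\pi_k(z)$ is nonzero; but the transposition argument handles both cases at once, so I would take that route.
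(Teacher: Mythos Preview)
Your proof is correct but takes a different route from the paper. The paper's argument is a two-line application of Corollary~\ref{cor:normal subgroup}: if $\calZ(\calH_n)$ were nontrivial it would be a nontrivial normal subgroup, hence would contain $\FAlt_n$; but then $\FAlt_n$ would be abelian, a contradiction. Your argument instead proves directly that any element commuting with every transposition of $X_n$ must fix each point, using only that $\FSym_n\subset\calH_n$ and that $|X_n|\ge3$. This is essentially the classical fact that the centralizer of $\FSym(\Omega)$ in $\Sym(\Omega)$ is trivial for $|\Omega|\ge3$. The paper's version is shorter in context because the normal-subgroup corollary is already in hand for other purposes (Hopficity, non-residual-finiteness); your version is self-contained and would stand even without that structural result.
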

\begin{proof}
Assume that $\calZ(\calH_n)$ is nontrivial.
By Corollary \ref{cor:normal subgroup}, $\FAlt_n\subset\calZ(\calH_n)$. This implies that $\FAlt_n$ is abelian, a contradiction.
\end{proof}

Let $\varphi$ be an endomorphism of $\calH_n$.
In this section, we shall assume $K=\ker(\varphi)\ne1$.
[Since $\calH_n$ is Hopfian by Corollary~\ref{cor:Hopfian}, $\varphi$ cannot be epic.]
By Corollary~\ref{cor:normal subgroup}, $K\cap\FSym_n$ is either $\FAlt_n$ or $\FSym_n$.

Consider first the case $K\cap\FSym_n=\FSym_n$.
Since $\FSym_n\subset K$,
$\varphi$ induces the following diagram
$$
\CD
\calH_n@>>>\calH_n/\FSym_n@>>>\calH_n/K\\
@VV{\varphi}V@VV{\bar\varphi}V@VV{\hat\varphi}V\\
\calH_n@>>>\calH_n/\FSym_n@>>>\calH_n/K
\endCD
$$
where the horizontal maps are canonical surjections.
It is immediate from the definition of the growth rate that
$$
\GR(\hat\varphi)\le\GR(\bar\varphi)\le\GR(\varphi).
$$
We claim now that $\GR(\hat\varphi)=\GR(\bar\varphi)=\GR(\varphi)$.
This follows from the fact that the endomorphism $\hat\varphi:\calH_n/K\to\calH_n/K$ induced by $\varphi$
is simply the restriction of $\varphi$ on the $\varphi$-invariant subgroup $\varphi(\calH_n)$ of $\calH_n$,
$\varphi|_{\varphi(\calH_n)}:\varphi(\calH_n)\to\varphi(\calH_n)$. It is known that
$\GR(\varphi)=\GR(\varphi|_{\varphi(\calH_n)})=\GR(\hat\varphi)$, see for example \cite[Sect.~4]{Canovas}.
Therefore, we have
$$
\GR(\varphi)=\GR(\bar\varphi).
$$
Recall also that $\GR(\bar\varphi)$ is the spectral radius of the integer matrix determined by
the endomorphism $\bar\varphi$ of $\calH_n/\FSym_n=\bbz^{n-1}$.

Next we consider the case $K\bigcap\FSym_n=\FAlt_n$. Since $\FAlt_n\subset K$,
$\varphi$ induces the following diagram
$$
\CD
\calH_n@>>>\calH_n/\FAlt_n@>>>\calH_n/K\\
@VV{\varphi}V@VV{\tilde\varphi}V@VV{\hat\varphi}V\\
\calH_n@>>>\calH_n/\FAlt_n@>>>\calH_n/K
\endCD
$$
where the horizontal maps are canonical surjections.
Repeating the same argument as above, we have
$$
\GR(\varphi)=\GR(\tilde\varphi).
$$
Since $\FSym_n$ is the subgroup of $\calH_n$ consisting of all elements that have finite order, $\FSym_n$ is $\phi$-invariant.
Consequently, $\phi$ induces the following diagram
$$
\CD
\FSym_n/\FAlt_n@>>>\calH_n/\FAlt_n@>>>\calH_n/\FSym_n\\
@VV{\phi'}V@VV{\tilde\phi}V@VV{\bar\phi}V\\
\FSym_n/\FAlt_n@>>>\calH_n/\FAlt_n@>>>\calH_n/\FSym_n
\endCD
$$
By \cite[Lemma~2.5]{FJL}, we have
$$
\GR(\tilde\phi)=\max\left\{\GR(\phi'),\GR(\bar\phi)\right\}.
$$
Remark also that $\FSym_n/\FAlt_n=\bbz_2$, hence $\GR(\phi')=0$ or $1$ depending on $\phi'$ is trivial or not trivial (then $\phi'$ is the identity).
If $\GR(\bar\phi) =\sp[\bar\phi]<1$ then $\bar\phi$ is eventually trivial or  $\GR(\bar\phi)=0$. This implies that $\GR(\bar\phi) \geq 1$ if $\bar \phi$ is not eventually trivial. So we can summarize our discussion in this section as follows.

\begin{Thm}
Let $\phi$ be an endomorphism of $\calH_n$ with nontrivial kernel.
Then $\FSym_n$ is $\phi$-invariant and so there results in the following commutative diagram
$$
\CD
\FSym_n/\FAlt_n@>>>\calH_n/\FAlt_n@>>>\calH_n/\FSym_n\\
@VV{\phi'}V@VV{\tilde\phi}V@VV{\bar\phi}V\\
\FSym_n/\FAlt_n@>>>\calH_n/\FAlt_n@>>>\calH_n/\FSym_n
\endCD
$$
If $\phi'$ is nontrivial and $\bar\phi$ is eventually trivial then $\GR(\phi)=1$, and otherwise we have
$$
\GR(\phi)
= \sp[\bar\phi].
$$
\end{Thm}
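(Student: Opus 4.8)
The plan is to assemble the statement from the two case analyses already carried out in the body of this section, so the theorem is essentially a bookkeeping consolidation. First I would recall that if $\phi$ has nontrivial kernel $K$, then by Corollary~\ref{cor:normal subgroup} the intersection $K\cap\FSym_n$ is either $\FAlt_n$ or $\FSym_n$, and that in either case $\FSym_n$ is $\phi$-invariant, since $\FSym_n$ is precisely the set of torsion elements of $\calH_n$ (any homomorphism carries torsion to torsion, and conversely an element mapping into $\FSym_n$ must have been torsion modulo $K$, which is already contained in $\FSym_n$). This yields the displayed commutative diagram with the three vertical maps $\phi'$ on $\FSym_n/\FAlt_n\cong\bbz_2$, $\tilde\phi$ on $\calH_n/\FAlt_n$, and $\bar\phi$ on $\calH_n/\FSym_n\cong\bbz^{n-1}$.

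Next I would invoke the identification $\GR(\phi)=\GR(\tilde\phi)$, which was established above: when $K\cap\FSym_n=\FAlt_n$ this is the restriction-to-image argument (\cite[Sect.~4]{Canovas}) applied to $\calH_n/\FAlt_n \to \calH_n/K$, and when $K\cap\FSym_n=\FSym_n$ one has $\FAlt_n\subset\FSym_n\subset K$, so $\tilde\phi$ factors through $\bar\phi$ and the same reasoning gives $\GR(\phi)=\GR(\bar\phi)=\GR(\tilde\phi)$. Then I would apply \cite[Lemma~2.5]{FJL} to the short exact sequence $1\to\FSym_n/\FAlt_n\to\calH_n/\FAlt_n\to\calH_n/\FSym_n\to1$ to get
$$
\GR(\tilde\phi)=\max\{\GR(\phi'),\GR(\bar\phi)\}.
$$
Since $\FSym_n/\FAlt_n\cong\bbz_2$ is finite, $\phi'$ is either trivial (then $\GR(\phi')=0$) or an automorphism of $\bbz_2$, i.e.\ the identity (then $\GR(\phi')=1$). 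Meanwhile $\GR(\bar\phi)$ is the spectral radius $\sp[\bar\phi]$ of the integer matrix representing $\bar\phi$ on $\bbz^{n-1}$, and by \cite[Lemma~2.3]{FJL} the alternative $\sp[\bar\phi]<1$ forces $\bar\phi$ to be eventually trivial; equivalently, if $\bar\phi$ is not eventually trivial then $\sp[\bar\phi]\ge1$.

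The conclusion is now a case split. If $\phi'$ is nontrivial and $\bar\phi$ is eventually trivial, then $\GR(\phi')=1$ while $\GR(\bar\phi)$ can only be $0$ (an eventually trivial endomorphism of a free abelian group has spectral radius $0$), so the maximum is $1$ and $\GR(\phi)=1$. In every other case I claim $\GR(\phi)=\sp[\bar\phi]$: if $\bar\phi$ is not eventually trivial then $\sp[\bar\phi]\ge1\ge\GR(\phi')$, so the maximum is $\sp[\bar\phi]$; if $\bar\phi$ is eventually trivial but $\phi'$ is trivial, then both $\GR(\phi')$ and $\GR(\bar\phi)=\sp[\bar\phi]$ are $0$, and the maximum is $0=\sp[\bar\phi]$. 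This exhausts the cases and gives the stated formula. The only mildly delicate point — and the one I would state carefully rather than the routine diagram-chasing — is the justification that $\GR(\phi)=\GR(\tilde\phi)$ uniformly in both subcases of $K\cap\FSym_n$; everything else is assembling already-proven ingredients.
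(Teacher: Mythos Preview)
Your proposal is correct and follows essentially the same route as the paper: the $\phi$-invariance of $\FSym_n$ via torsion, the reduction $\GR(\phi)=\GR(\tilde\phi)$ through the restriction-to-image argument of \cite{Canovas}, the splitting $\GR(\tilde\phi)=\max\{\GR(\phi'),\GR(\bar\phi)\}$ from \cite[Lemma~2.5]{FJL}, and the dichotomy on $\sp[\bar\phi]$ via \cite[Lemma~2.3]{FJL}. The only cosmetic difference is that you unify the two subcases $K\cap\FSym_n=\FAlt_n$ and $K\cap\FSym_n=\FSym_n$ under the single identity $\GR(\phi)=\GR(\tilde\phi)$ before the final case split, whereas the paper treats them sequentially; and your parenthetical about elements ``mapping into $\FSym_n$'' is unnecessary (torsion $\to$ torsion already gives $\phi(\FSym_n)\subset\FSym_n$).
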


\section{Monomorphisms of $\calH_n$}

Let $2\leq n$ and let $\phi$ denote a monomorphism of $\calH_n$ throughout this section unless otherwise stated. Let {$\supp g$} denote the support of $g\in \calH_n$.
Recall from Theorem \ref{C} that if $n\ge3$ then $\calH_n$ has the following presentation
\begin{equation}\label{eq:relations}
\alpha^2=1,\
(\alpha\alpha^{g_2})^3=1,\
[\alpha,\alpha^{g_2^2}]=1,\
\alpha=[g_i,g_j],\
\alpha^{ g_i^{-1}}=\alpha^{ g_j^{-1}}\
\end{equation} for $2\le i\ne j\le n$.

For $i\neq j$ and $k\in \bbn$, the support of the transposition $\alpha^{g_i^{k+1}}$ belongs to $R_{i}$ and does not intersect $\supp g_j$. Consequently we have the following identities
\begin{equation}\label{eq:r_6}
[\alpha^{g_i^{k+1}}, g_j]=1
\end{equation}
for $k\in\bbn$ and $i\neq j$. By the same reason, we also have
\begin{equation}\label{eq:r_8}
[\alpha, \alpha^{g_i^{k+1}}]=1 
\end{equation}for all $k \in \bbn$.
Note that the actions of $g_i^{-1}$ and $g_j^{-1}$ coincide on $R_1$. So we have identities
\begin{equation}\label{eq:r_7}
\alpha^{g_i^{-k}}= \alpha^{g_j^{-k}} 
\end{equation}
for all $2\leq i,j\leq n$ and $k \in \bbn$.

Note that any homomorphism of $\calH_n$ must preserve all identities $r_1$ through $r_5$ as well as identities (\ref{eq:r_6}), (\ref{eq:r_7}) and (\ref{eq:r_8}).\\

We say $P\in \supp g$ is an \emph{essential point} of $g$ if its orbit under $g$ is infinite. Let $\Es g$ denote the set of essential points of $g$.
For two elements $g,h\in \calH_n$, we say $g$ \emph{intersects} $h$ if $\supp g \cap \supp h \neq \emptyset$.
Similarly we say $g$ \emph{intersects} $X \subset X_n$ if $\supp g \cap X\neq \emptyset$.

For each pair $2\leq i \neq j\leq n$, $[\phi(g_i),\phi(g_j)]=\phi(\alpha)$ is nontrivial and has order $2$ for a monomorphism $\phi$. So $\phi(\alpha)$ can be written
\begin{equation}\label{eq:cycle_alpha}
\phi(\alpha) = \tau_1 \cdots \tau_\ell
\end{equation} as a product of commuting transpositions. Let $T_\alpha$ denote the set of transpositions in (\ref{eq:cycle_alpha}), and let $S_\alpha= \supp \phi(\alpha)$. Note that each $P\in S_\alpha$ determines a \emph{unique} transposition in $T_\alpha$ which moves $P$. Let $T_i$ be the collection of transpositions $\tau$ in (\ref{eq:cycle_alpha}) with $\supp \tau \cap\Es \phi(g_i)\neq \emptyset$, $2\leq i \leq n$.

\begin{Lemma}\label{lemma:transposition_on_ess_orbit}
Suppose $\phi$ is a monomorphism of $\calH_n$ with $n\geq 2$. For each $i$, $T_i \neq \emptyset$.
\end{Lemma}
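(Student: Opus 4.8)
The plan is to play the injectivity of $\phi$ against an infinite subgroup of $\calH_n$ attached to $g_i$. For a fixed $i$, a direct computation from the definition of the generators shows that $\alpha$ is the transposition exchanging $(1,1)$ and $(1,2)$, and hence that $\alpha^{g_i^{-k}}$ exchanges $(1,k+1)$ and $(1,k+2)$ for $k\ge 0$, that $\alpha^{g_i}$ exchanges $(1,1)$ and $(i,1)$, and that $\alpha^{g_i^{k}}$ exchanges $(i,k-1)$ and $(i,k)$ for $k\ge 2$; these are the transpositions of consecutive points along the unique infinite $g_i$-orbit $R_1\cup R_i=\Es g_i$. In particular the elements $\alpha^{g_i^{k}}$, $k\in\bbz$, are pairwise distinct, so the subgroup $A_i:=\langle\,\alpha^{g_i^{k}}:k\in\bbz\,\rangle$ of $\calH_n$ is infinite (in fact $A_i=\FSym(R_1\cup R_i)$). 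Since $\phi$ is a monomorphism, $\phi$ restricts to an isomorphism $A_i\to\phi(A_i)=\langle\,\phi(\alpha)^{\phi(g_i)^{k}}:k\in\bbz\,\rangle$, so $\phi(A_i)$ is infinite as well. For $n=2$ the relation $\alpha=[g_i,g_j]$ is not available, but the description of $A_2$ as $\FSym(R_1\cup R_2)$ still holds, so the argument is uniform in $n\ge 2$.

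Next I would argue by contradiction. Suppose $T_i=\emptyset$. By the definition of $T_i$ this means precisely that $\supp\phi(\alpha)\cap\Es\phi(g_i)=\emptyset$, i.e.\ every point of the finite set $S:=\supp\phi(\alpha)$ lies in a finite $\phi(g_i)$-orbit. Let $F$ be the union of the $\phi(g_i)$-orbits of the points of $S$. Then $F$ is finite (a finite union of finite orbits), it is $\phi(g_i)$-invariant, and it contains $S$. Since we act on the right, $\supp(h^{g})=(\supp h)\,g$ for all $h,g$, so for every $k$ we get $\supp\bigl(\phi(\alpha)^{\phi(g_i)^{k}}\bigr)=S\cdot\phi(g_i)^{k}\subseteq F$. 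Thus every generator $\phi(\alpha)^{\phi(g_i)^{k}}$ of $\phi(A_i)$ lies in the finite group $\Sym(F)$, whence $\phi(A_i)\le\Sym(F)$ is finite. This contradicts the previous paragraph, so $T_i\ne\emptyset$.

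The argument is short, and the only real work is choosing the right witness subgroup: one needs a subgroup built from $g_i$ and $\alpha$ that is both \emph{infinite} and, under the hypothesis $T_i=\emptyset$, has all of its conjugacy-translated generators confined to a single finite set; $A_i$ does this because each generator is a transposition whose support $\phi$ carries along powers of $\phi(g_i)$ exactly as it carries $\supp\phi(\alpha)$. Naive choices such as $\langle\alpha\rangle$ (too small) or $\langle g_i,\alpha\rangle$ (whose elements have infinite support, hence cannot be trapped in $\Sym(F)$) do not work. Beyond this choice, the only items to verify are the elementary facts recalled in the first paragraph and the bookkeeping of supports under conjugation, neither of which presents a genuine obstacle.
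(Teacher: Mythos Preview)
Your proof is correct and follows essentially the same approach as the paper: both exploit that the conjugates $\alpha^{g_i^k}$ are pairwise distinct, and both observe that if $S_\alpha\cap\Es\phi(g_i)=\emptyset$ then every point of $S_\alpha$ has a finite $\phi(g_i)$-orbit, forcing the images $\phi(\alpha)^{\phi(g_i)^k}$ to repeat. The paper finds an explicit $k_0$ (the lcm of the finite orbit lengths) with $\phi(\alpha)^{\phi(g_i)^{k_0}}=\phi(\alpha)$, while you package the same finiteness as $\phi(A_i)\le\Sym(F)$; this is a cosmetic difference.

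One remark: the paper's proof goes on to establish an additional fact not in the Lemma statement, namely that if $\tau\in T_i$ moves an essential point $P$ of $\phi(g_i)$ then the other point $(P)\tau$ is also essential. This stronger conclusion is obtained via a separate argument using the commutation relations $[\alpha,\alpha^{g_i^k}]=1$ and Lemma~\ref{lemma:comm_involution}. Your argument does not yield this, but it is not required for the Lemma as stated; just be aware that the paper relies on this refinement later when classifying the transpositions in $T_i$.
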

\begin{proof}
We first consider a monomorphism when $n\ge 3$. Fix $i$. The conjugations $\alpha^{g_i^k}$ are all distinct for $k \in \bbz$. A monomorphism $\phi$ induces a bijection between infinite sets $\{\alpha^{g_i^k}\,|\, k \in \bbz\} \leftrightarrow \{\phi(\alpha^{g_i^k})\,|\, k \in \bbz\}=A$. Let us denote by $\beta_k$ conjugation
$$
\beta_k = \phi(\alpha)^{\phi(g_i)^k}=(\tau_1 \cdots \tau_\ell)^{\phi(g_i)^k}.
$$
If all points of $\supp \phi(\alpha)$ are not essential points of $\phi(g_i)$, then the set $A$ must be finite since $\phi(g_i)^{k_0}$ conjugates $\phi(\alpha)$ to itself for some integer $k_0\neq 0$.
Suppose that all points of $S_\alpha$ are not essential points of $\phi(g_i)$. If $S_\alpha\bigcap \supp\phi(g_i)=\emptyset$, then it is clear that $\phi(\alpha)^{\phi(g_i)}=\phi(\alpha)$. Next consider $S_\alpha\bigcap \supp\phi(g_i)=\{P_1,\cdots,P_s\}\ne\emptyset$.
By the supposition, each $P_j$ is not an essential point of $\phi(g_i)$, i.e., $(P_j)\phi(g_i)^{k_j}=P_j$ for some $k_j>0$. Let $k_0=\mathrm{lcm}(k_1,\cdots,k_s)$. Hence $(P_j)\phi(g_i)^{k_0}=P_j$ for all $j=1,\cdots,s$ and so
$(P)\phi(g_i)^{k_0}=P$ for all $P\in S_\alpha$. This shows that $\beta_{k_0}=\phi(\alpha)$. 

Let $P\in \supp \phi(\alpha)$ be an essential point of $\phi(g_i)$. Let $\tau'$ denote the unique transposition in (\ref{eq:cycle_alpha}) which moves $P$ to $Q=(P)\tau'$. We claim that $Q$ is also an essential point of $\phi(g_i)$. Assume the contrary that
$$(Q)\phi(g_i)^{k_0}=Q
$$ for some integer $k_0 \neq 0$ ($k_0=1$ when $Q$ is fixed by $\phi(g_i)$, and $|k_0| \geq 2$ when $Q\in \supp \phi(g_i)$). We need to examine $\beta_k$ when $k$ is a multiple of $k_0$.

Since $\phi(g_i)^{mk_0}$ fixes $Q$ for each nonzero $m\in \bbz$, $\phi(g_i)^{mk_0}$ conjugates $\tau' = (P, Q)$ to the transposition, denoted by $\tau'_m$, which exchanges $(P)\phi(g_i)^{mk_0}$ and $Q$.
See Figure \ref{fig:on_p}. Observe that
$$
Q \in \supp\tau' \cap \supp\tau'_m \subset S_\alpha\cap\supp \beta_{mk_0}
$$
for each nonzero $m\in \bbz$. To draw a contradiction, we use identities (\ref{eq:r_8}) which say in particular that $\phi(\alpha)$ commute with $\beta_k$ for all $k=mk_0$ with $|mk_0|\geq 2$. We apply Lemma \ref{lemma:comm_involution} to see that the  intersection $I_m=S_\alpha \cap \supp \beta_{mk_0}$ satisfies that
\begin{equation}
(I_m)\phi(\alpha)=I_m=(I_m)\beta_{mk_0}
\end{equation} for all $m$ with $|mk_0|\geq 2$. In particular, $S_\alpha$ contains $(Q)\beta_{mk_0}=(Q)\tau'_m =(P)\phi(g_i)^{mk_0}$ for infinitely many $m\in \bbz$. 
Since $P$ is an essential point of $\phi(g_i)$, $S_\alpha$ must be an infinite set. This contradicts that an element $\phi(\alpha) \in\FSym_n$ has a finite support. Therefore $Q$ is also an essential point of $\phi(g_i)$, and hence the transposition $\tau'$ exchanges two essential points $P$ and $Q$ of $\phi(g_i)$.

For $n=2$, recall that $\calH_2$ has a presentation
\begin{equation}\label{eq:presentation H_2}
\calH_2=\langle g_2,\alpha \mid \alpha^2=1, (\alpha\alpha^{g_2})^3=1,
[\alpha,\alpha^{g_2^k}]=1 \text{ for all } |k|>1\rangle
\end{equation} It follows from analogous arguments applied to $g_2$ with commutation relations above that $T_i \neq \emptyset $.
\end{proof}
\begin{figure}[h]
    \includegraphics[width=.27\textwidth]{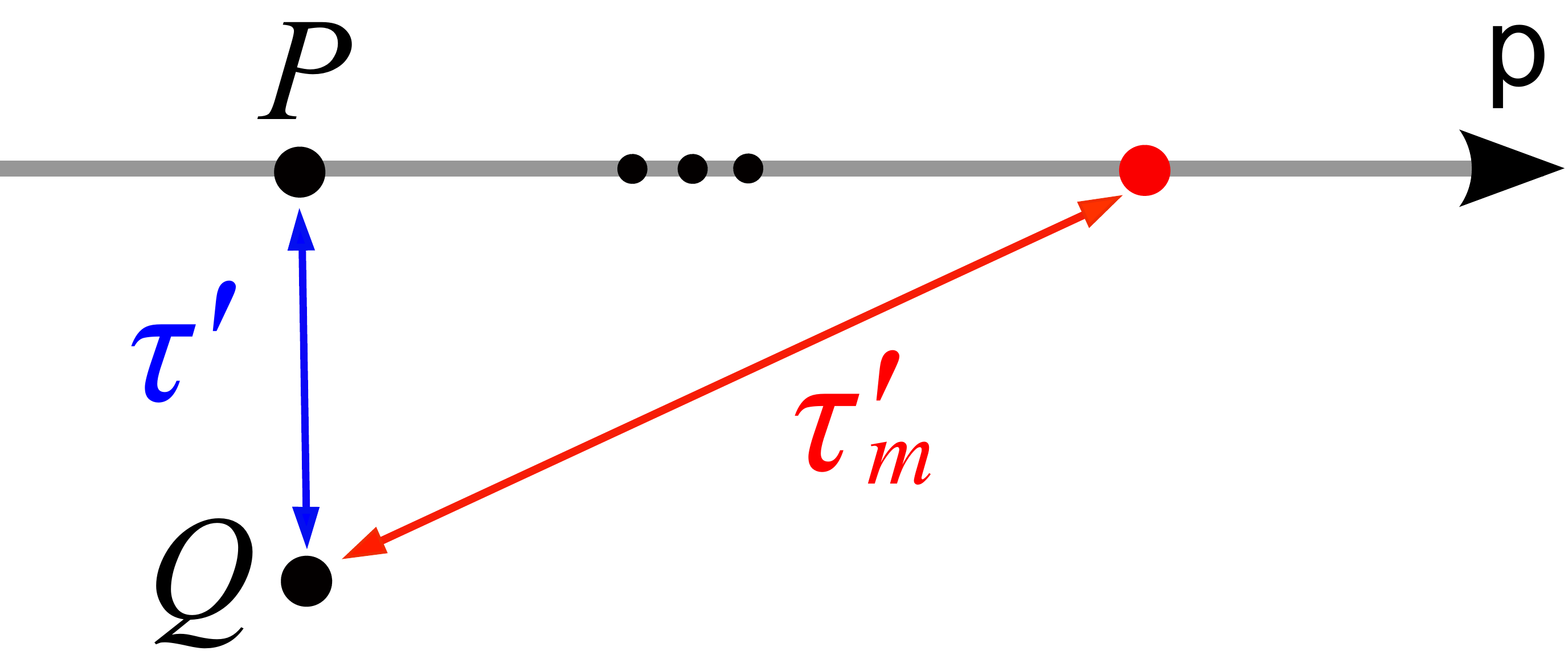}
    \caption{$\tau'_m$ moves $Q$ to $(P)\phi(g_i)^{mk_0}$ for infinitely many $m\in \bbz$.}
    \label{fig:on_p}
\end{figure}

\begin{Lemma}\label{lemma:comm_involution}
Suppose two involutions $\beta, \gamma \in \FSym_n$ commute. Then $I=(I)\beta = (I)\gamma$ where $I=\supp\beta \cap \supp \gamma$.
\end{Lemma}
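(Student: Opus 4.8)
The plan is to deduce the lemma from the elementary fact that two commuting permutations preserve each other's supports as sets. First I would record this fact: if $g$ and $h$ are bijections of a set with $gh=hg$, then $(\supp h)g=\supp h$. Indeed, let $x\in\supp h$; if $((x)g)h=(x)g$, then since $ghg^{-1}=h$ we would get $(x)h=(x)(ghg^{-1})=\bigl(((x)g)h\bigr)g^{-1}=\bigl((x)g\bigr)g^{-1}=x$, contradicting $x\in\supp h$. Hence $(x)g\in\supp h$, so $(\supp h)g\subseteq\supp h$; applying the same to $g^{-1}$, which also commutes with $h$, gives the reverse inclusion, and therefore $(\supp h)g=\supp h$. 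Symmetrically $(\supp g)h=\supp g$.

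Now I apply this twice. Taking $g=\beta$ and $h=\gamma$: since $\beta$ and $\gamma$ commute, $(\supp\gamma)\beta=\supp\gamma$, and trivially $(\supp\beta)\beta=\supp\beta$, so the intersection $I=\supp\beta\cap\supp\gamma$ satisfies $(I)\beta\subseteq I$. Because $\beta$ is an involution, $I=(I)\beta^2\subseteq(I)\beta\subseteq I$, forcing $(I)\beta=I$. Interchanging the roles of $\beta$ and $\gamma$ gives $(I)\gamma=I$ as well, which is exactly the assertion of the lemma.

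The argument is short and I do not anticipate any genuine obstacle. The only points that require care are the bookkeeping distinction between a set being invariant under a permutation and being fixed pointwise, and the observation that it is the involution hypothesis $\beta^2=\gamma^2=1$ — equivalently, the finiteness of the supports together with the injectivity of $\beta$ and $\gamma$ — that upgrades the inclusion $(I)\beta\subseteq I$ to the equality $(I)\beta=I$.
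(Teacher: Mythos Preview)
Your proof is correct and considerably more streamlined than the paper's. The key observation---that commuting permutations permute each other's supports---immediately gives $(\supp\gamma)\beta=\supp\gamma$, and combined with the trivial $(\supp\beta)\beta=\supp\beta$ you get $(I)\beta\subseteq I$; the involution condition then forces equality.

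The paper takes a more hands-on route: it decomposes $\beta=\beta_B\cdot\beta_{B^c}$ and $\gamma=\gamma_C\cdot\gamma_{C^c}$ where $B=I\cup(I)\beta$ and $C=I\cup(I)\gamma$, uses $\beta^\gamma=\beta$ to deduce $(\beta_B)^{\gamma_C}=\beta_B$, and then argues by examining individual transpositions that $B=C$, from which $(I)\beta=(I)\gamma=I$ is extracted. This is longer and less transparent as a proof of the bare statement, but it has a payoff: the explicit transposition-level analysis feeds directly into the ``square'' construction of Remark~\ref{rmk:square_in_commuting_involutions}, which is used repeatedly afterwards. Your argument proves the lemma cleanly but does not by itself surface that combinatorial structure; of course, once the lemma is in hand, the square remark can be established separately.

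One small expository quibble: in your closing paragraph, ``equivalently'' is not quite the right word---the involution hypothesis and the finiteness-plus-injectivity observation are two \emph{alternative} ways to upgrade $(I)\beta\subseteq I$ to equality, not equivalent hypotheses. This does not affect the correctness of the proof.
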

\begin{proof}
We may assume the intersection $I =\supp\beta \cap \supp \gamma$ is non-empty. Since $\beta$ is an involution, $B=I\cup (I)\beta$ is a $\beta$-invariant subset of $\supp\beta$. Similarly $C =I\cup (I)\gamma$ satisfies $(C)\gamma=C$. Let $\beta_B$ and $\gamma_C$ denote the restrictions of $\beta$ and $\gamma$ on $B$ and $C$ respectively. We denote by $\beta_{B^c}$ and $\gamma_{C^c}$ the restrictions on the complements $B^c=\supp \beta \setminus B$ and $C^c = \supp \gamma \setminus C$ respectively. We can write
$$
\beta = \beta_{B}\cdot\beta_{B^c} \text{  and  }  \gamma = \gamma_{C}\cdot\gamma_{C^c}.
$$
Note that $ B^c \cap \supp \gamma=\emptyset$ and $C^c \cap \supp \beta=\emptyset$.

Since $\beta^\gamma = \beta$ and
$$
\beta^\gamma=(\beta_B \cdot \beta_{B^c} )^\gamma = (\beta_B)^\gamma  \cdot (\beta_{B^c})^\gamma =(\beta_B) ^\gamma \cdot \beta_{B^c} =(\beta_B) ^{\gamma_C} \cdot \beta_{B^c},
$$
we have $(\beta_B) ^{\gamma_C} \cdot \beta_{B^c} =\beta=\beta_B\cdot \beta_{B^c}$.
So, $\gamma_C$ conjugates $\beta_B$ to itself, and $B$ is an invariant set of $\gamma_C$.  Similarly $\gamma^\beta = \gamma$ implies that $(\gamma_C)^{\beta_B}=\gamma_C$.

We first show $B\subset C$. If $P \in B\setminus C \subset(I)\beta\setminus (I)\gamma$, there exists $Q \in I$ such that $P=(Q)\beta_B$. Observe that $P$ and $Q$ determine a unique transposition in the cycle decomposition of $\beta_B$ such that
$$
P=(Q)\tau.
$$
Since $\gamma_C$ moves $Q$ to $Q'\neq P$ while fixing $P\notin C$, $\gamma_C$ conjugates $\tau$ to $\tau^{\gamma_C}\neq \tau$ which exchanges $P$ and $Q'$. However this contradicts the uniqueness of the transposition $\tau$ of $\beta_B$. In other words $\gamma_C$ fails to conjugate $\beta_B$ to itself. Therefore $(I)\beta\subset(I)\gamma$ and so $B \subset C$.

For the converse $(I)\gamma\subset (I)\beta$ or $C \subset B$, one can use analogous arguments applied to $\gamma^\beta = \gamma$. In all,
$$
(I)\beta=(I)\gamma,\
B= I\cup (I)\beta = I\cup (I)\gamma =C.
$$
Furthermore, if $P\in (I)\beta=(I)\gamma$ then $P=(Q)\beta=(Q')\gamma$ for some $Q,Q'\in I$, thus $P\in\supp\beta\cap\supp\gamma=I$. The reverse inclusion follows since $\beta$ and $\gamma$ are involutions, hence $(I)\beta =(I)\gamma=I$.
\end{proof}

\begin{Rmk}\label{rmk:square_in_commuting_involutions}\textbf{(Squares on $I$)}
An interesting consequence of Lemma~\ref{lemma:comm_involution} is that
a point $P\in I$ with $(P)\beta \neq (P)\gamma$ determines a certain \emph{square}
(or \emph{$4$-cycle of transpositions}) on $I$.
Let $\tau_1$ and $\tau'_1$ be the transpositions of $\beta$ and $\gamma$ respectively
which moves $P$ to distinct points.
The Lemma~\ref{lemma:comm_involution} says $Q=(P)\tau_1 \in I \subset\supp \gamma$
and $R=(P)\tau'_1 \in I \subset \supp \beta$.
Let $\tau_2$ and $\tau_2'$ be the unique transpositions of $\beta$ and $\gamma$ respectively
so that $R \in \supp \tau_2$ and $Q \in \supp \tau'_2$.
The intersection $\supp \tau_2 \cap \supp \tau_2'$ contains a point $S$
since $\beta\gamma =\gamma \beta$ applied to $P$ implies
\begin{align*}
&P \xrightarrow {\beta} (P)\beta = (P)\tau_1=Q \xrightarrow {\gamma} (Q)\gamma = (Q) \tau'_2 = S,\\
& P\xrightarrow {\gamma} (P)\gamma = (P)\tau'_1 =R\xrightarrow {\beta} (R)\beta = (R) \tau_2=S.
\end{align*}
Applying alternate compositions of $\beta$ and $\gamma$,
we have a square on the four points
$P\xrightarrow{\tau'_1}R\xrightarrow{\tau_2}S\xrightarrow{\tau'_2}Q
\xrightarrow{\tau_1}P$.
Reading off four transpositions of $\beta$ and $\gamma$ in order
we have a $4$-cycle $\tau'_1 -\tau_2 -\tau'_2-\tau_1$ of transpositions. (This should not be confused with a $4$-cycle of $\FSym_n$.) Figure \ref{fig:square} illustrates the square on four points and corresponding $4$-cycle of transpositions.

\begin{center}
\begin{figure}[h]
    \includegraphics[width=.18\textwidth]{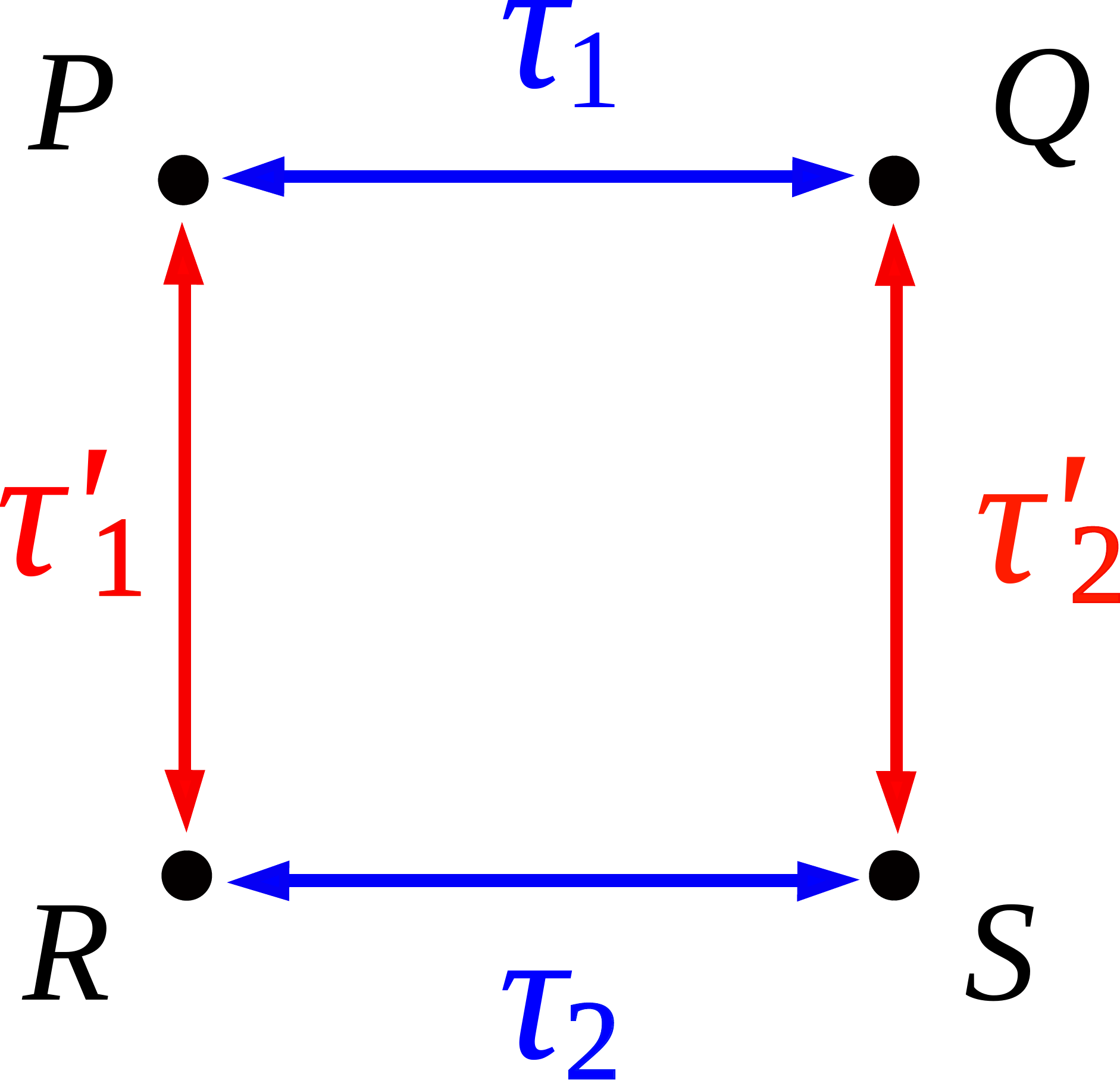}
    \caption{}
    \label{fig:square}
\end{figure}
\end{center}
\end{Rmk}

Next we consider a useful necessary condition when two involutions $\beta, \gamma\in \FSym_n$
form $\beta \gamma$ of order $3$.
Suppose $\beta$ and $\gamma$ have cycle decompositions
\begin{equation*}
\beta=\tau_1 \cdots \tau_k \text {  and  }\gamma=\tau'_1 \cdots \tau'_\ell.
\end{equation*}
Let $\calS=\{\tau_1, \cdots, \tau_k\}$ and $\calT= \{\tau'_1 , \cdots, \tau'_\ell\}$,
and let $\frakP(\calS)$ and $\frakP(\calT)$ denote the power sets of $\calS$ and $\calT$ respectively.
We define assignments
\begin{equation}\label{eq:xi and chi}
\xi:\calS \lra \frakP(\calT)\ \text{  and  }\ \chi:\calT\lra\frakP(\calS)
\end{equation}
so that $\supp \tau_i$ has non-trivial intersection with each support of $\tau'_j \in \xi(\tau_i)$.
The assignment $\chi(\tau_i)$ is defined by the same manner after swapping $\calS$ and $\calT$.
As we shall see below the cardinalities of $\xi(\ast)$ and $\chi(\ast)$ are either one or two.
Let $\calS_1 \subset \calS$ and $\calT_1 \subset \calT$ be defined by
\begin{equation*}
\calS_1= \{\tau_i \in \calS\,:\,| \xi(\tau_i)|=1\}, \quad \calT_1=\{\tau'_j \in \calT :| \chi(\tau'_j)|=1\}.
\end{equation*}

\begin{Lemma}\label{lemma:3-cycle} With the notations for two involutions $\beta$ and $\gamma$ such that $\beta \gamma$ has order $3$ as above, $1\leq |\xi(\tau_i)| \leq 2 $ and $1 \leq |\chi(\tau'_j)| \leq 2$ for each $\tau_i\in \calS$ and $\tau'_j \in \calT$. There exists a bijection $\calS_1 \leftrightarrow \calT_1$
where correspondence is given by $\xi(\tau_i) = \{\tau'_j\} \Leftrightarrow\chi(\tau'_j) = \{\tau_i\}$.
\end{Lemma}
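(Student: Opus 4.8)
The plan is to analyze the action on $X_n$ of the subgroup $H=\langle\beta,\gamma\rangle\le\FSym_n$. Since $\beta\gamma$ has order $3$ we have $\beta\neq\gamma$, and as $\beta,\gamma$ are involutions, $\beta(\beta\gamma)\beta^{-1}=\gamma\beta=(\beta\gamma)^{-1}$; thus $H$ is a quotient of the dihedral group of order $6$. On the other hand $H$ contains the order-$3$ element $\beta\gamma$ together with the order-$2$ element $\beta\notin\langle\beta\gamma\rangle$, so $|H|\ge 6$. Hence $|H|=6$, i.e. $H\cong S_{3}$.

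Next I would decompose $X_n$ into $H$-orbits. The support of any transposition occurring in the cycle decomposition of $\beta$ or of $\gamma$ is contained in a single $H$-orbit, and supports lying in distinct orbits are disjoint; so $\xi$ and $\chi$ can be read off one orbit at a time. Each orbit $\calO$ has size $d\mid|H|=6$, so $d\in\{1,2,3,6\}$, and size-$1$ orbits (common fixed points of $\beta$ and $\gamma$) contribute nothing. For $d\ge 2$ the image of $H\to\Sym(\calO)$ is a transitive subgroup of $\Sym(\calO)$ and a quotient of $H\cong S_{3}$; since the only quotients of $S_{3}$ have orders $1,2,6$, the image is $\bbz_2$ when $d=2$ and $S_{3}$ when $d\in\{3,6\}$, the latter action being faithful and regular when $d=6$.

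Then I would treat the three cases. If $d=2$, both $\beta$ and $\gamma$ restrict on $\calO$ to the unique transposition of the two points, so there is exactly one $\tau_i\in\calS$ and one $\tau'_j\in\calT$ with support in $\calO$, they are equal, and $\xi(\tau_i)=\{\tau'_j\}$, $\chi(\tau'_j)=\{\tau_i\}$. If $d=3$, $\beta|_\calO$ and $\gamma|_\calO$ are two distinct transpositions on a $3$-element set, so their supports meet in exactly one point; again there is a unique $\beta$-transposition $\tau_i$ and a unique $\gamma$-transposition $\tau'_j$ supported in $\calO$, and $\xi(\tau_i)=\{\tau'_j\}$, $\chi(\tau'_j)=\{\tau_i\}$. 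If $d=6$, the restrictions $\beta|_\calO$ and $\gamma|_\calO$ are fixed-point-free involutions, each a product of three disjoint transpositions; for a $\beta$-transposition $\{P,(P)\beta\}$, the point $P$ lies in the $\gamma$-transposition $\{P,(P)\gamma\}$ and $(P)\beta$ lies in the $\gamma$-transposition $\{(P)\beta,(P)\beta\gamma\}$, and these two $\gamma$-transpositions are distinct because $(P)\beta\neq(P)\gamma$, as $\beta\gamma$ acts without fixed points in the regular representation; hence $|\xi(\tau_i)|=2$, and symmetrically $|\chi(\tau'_j)|=2$ for every transposition supported in $\calO$. In all cases $1\le|\xi(\tau_i)|\le 2$ and $1\le|\chi(\tau'_j)|\le 2$. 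Moreover this shows that $\tau_i\in\calS_1$ precisely when $\supp\tau_i$ lies in an orbit of size $2$ or $3$, and likewise for $\calT_1$; in each such orbit the above pairs the unique $\beta$-transposition with the unique $\gamma$-transposition, and this orbit-wise assignment is exactly the correspondence $\xi(\tau_i)=\{\tau'_j\}\Leftrightarrow\chi(\tau'_j)=\{\tau_i\}$, which is therefore a bijection $\calS_1\leftrightarrow\calT_1$.

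The bulk of the argument is the elementary case check on an $S_{3}$-orbit, which is routine. The one point deserving care is the $d=6$ case: one must verify that the two $\gamma$-transpositions meeting a given $\beta$-transposition are genuinely distinct, so that no transposition arising from a size-$6$ orbit slips into $\calS_1$ — and this is exactly where the hypothesis that $\beta\gamma$ has order $3$ (hence acts freely in the regular representation) is used.
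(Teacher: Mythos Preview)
Your argument is correct and is a genuinely different, more structural approach than the paper's. The paper proves the bounds $1\le|\xi(\tau_i)|\le2$ by direct computation: the lower bound comes from observing that if $\gamma$ fixes both points of $\tau_i$ then $\tau_i$ survives in $(\beta\gamma)^3$, and the bijection $\calS_1\leftrightarrow\calT_1$ is established by a contradiction argument in which one assumes $\xi(\tau_i)=\{\tau'_j\}$ but $|\chi(\tau'_j)|=2$, and then traces a specific point through $(\beta\gamma)^3$ to see it is not fixed. Your approach instead recognizes that $\langle\beta,\gamma\rangle\cong S_3$ and decomposes $X_n$ into $S_3$-orbits of size $1,2,3,6$; the orbit type then dictates everything at once. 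What your method buys is a clean conceptual picture that in fact anticipates the paper's subsequent Remark on ``hexagons'': the size-$6$ orbits are exactly the hexagons of Remark~\ref{rmk:hexagon}, and the size-$2$ and size-$3$ orbits correspond to the $\calS_1\leftrightarrow\calT_1$ pairs. The paper's hands-on approach, on the other hand, requires no group-theoretic preliminaries and makes the later orbit-tracing arguments (squares and hexagons) feel continuous with this lemma.
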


\begin{proof}
Fix a transposition $\tau_i \in \calS$ which exchanges two points $P$ and $Q$.
It is obvious that $\calT$ contains at most two transpositions which move $P$ or $Q$.
So the cardinality of $\xi(\tau_i)$ is at most $2$.
To show $|\xi(\tau_i)|\neq 0$, assume that every transposition of $\calT$ fixes $P$ and $Q$.
For convenience we write
$\beta=\tau_i \cdot \beta'$ where $\beta'= \tau_1 \cdots \widehat{\tau_i} \cdots \tau_k$ fixes $P$ and $Q$.
If $\gamma$ fixes $P$ and $Q$ then the product $(\beta \gamma)^3$ can be written as
$$
(\beta \gamma)^3 = (\tau_i \beta' \cdot \gamma)^3 = \tau_i^3 (\beta'\gamma)^3 = \tau_i(\beta'\gamma)^3
$$
since $\tau_i$ commutes with $\beta'$ and $\gamma$.
The support of  $\tau_i(\beta'\gamma)^3$ contains $P$ and $Q$ and so it is not empty.
This means that $\beta \gamma$ is not an element of order $3$.
We have shown that $|\xi(\tau_i)| $ equals $1$ or $2$.
For the cardinality of $\chi(\ast)$ one applies analogous arguments after swapping $\beta$ and $\gamma$.

To establish the correspondence $\calS_1 \leftrightarrow \calT_1$,
suppose $\xi(\tau_i) = \{\tau'_j\}$, i.e., $\tau'_j \in \calT$ is the unique transposition which moves $P$ or $Q$.
If $\tau_i=\tau'_j$ it is obvious that $\xi(\tau_i) = \{\tau'_j\} \Leftrightarrow\chi(\tau'_j) = \{\tau_i\}$.
We assume $\tau_i\neq\tau'_j$ and show that $\chi(\tau'_j) = \{\tau_i\}$ as follows.
It suffices to show that $|\chi(\tau'_j)|=1$ since $\tau_i \in \chi(\tau'_j)$.
If $|\chi(\tau'_j)|=2$ then $\calS$ contains a transposition $\tau \neq\tau_i$,
whose support intersects $\supp \tau'_j$ nontrivially.
Since $\tau_j'$ has to move $P$ or $Q$, so we may assume $\tau'_j$ moves $P$. Let $P' = (P)\tau'_j$. Then $\tau$ exchanges $P'$ and $R=(P')\tau$, and the point $Q$ is fixed by $\gamma$.
Applying the product $\beta \gamma$ repeatedly to the point $R$ we have $(R)(\beta\gamma)^3 = P'\neq R$ since
\begin{align*}
&R \xrightarrow {\beta} (R)\beta =(R)\tau=P'  \xrightarrow {\gamma} (P')\gamma = (P')\tau'_j = P,\\
&P \xrightarrow {\beta} (P)\beta = (P)\tau_i = Q \xrightarrow {\gamma} (Q) \gamma =Q,\\
&Q\xrightarrow {\beta} (Q)\beta = (Q)\tau_i = P \xrightarrow {\gamma} (P) \gamma =(P)\tau'_j = P'.
\end{align*}So $R\in\supp (\beta\gamma)^3\neq \emptyset$. This contradicts that the product $\beta \gamma$ has order $3$. Figure \ref{figure:1-1} illustrates this situation.

So far we have shown that $\xi(\tau_i) = \{\tau'_j\}$ implies $\chi(\tau'_j) = \{\tau_i\}$. The converse follows because a symmetric argument shows that $\gamma \beta = (\beta \gamma)^{-1}$ fails to have  order $3$ if $\chi(\tau'_j) =\{\tau_i\}$ but $|\xi(\tau_i)|=2$. Therefore $\xi(\tau_i) = \{\tau'_j\} \Leftrightarrow\chi(\tau'_j) = \{\tau_i\}$ determines the correspondence $\calS_1 \leftrightarrow \calT_1$.
\end{proof}

\begin{figure}[h]
    \includegraphics[width=.16\textwidth]{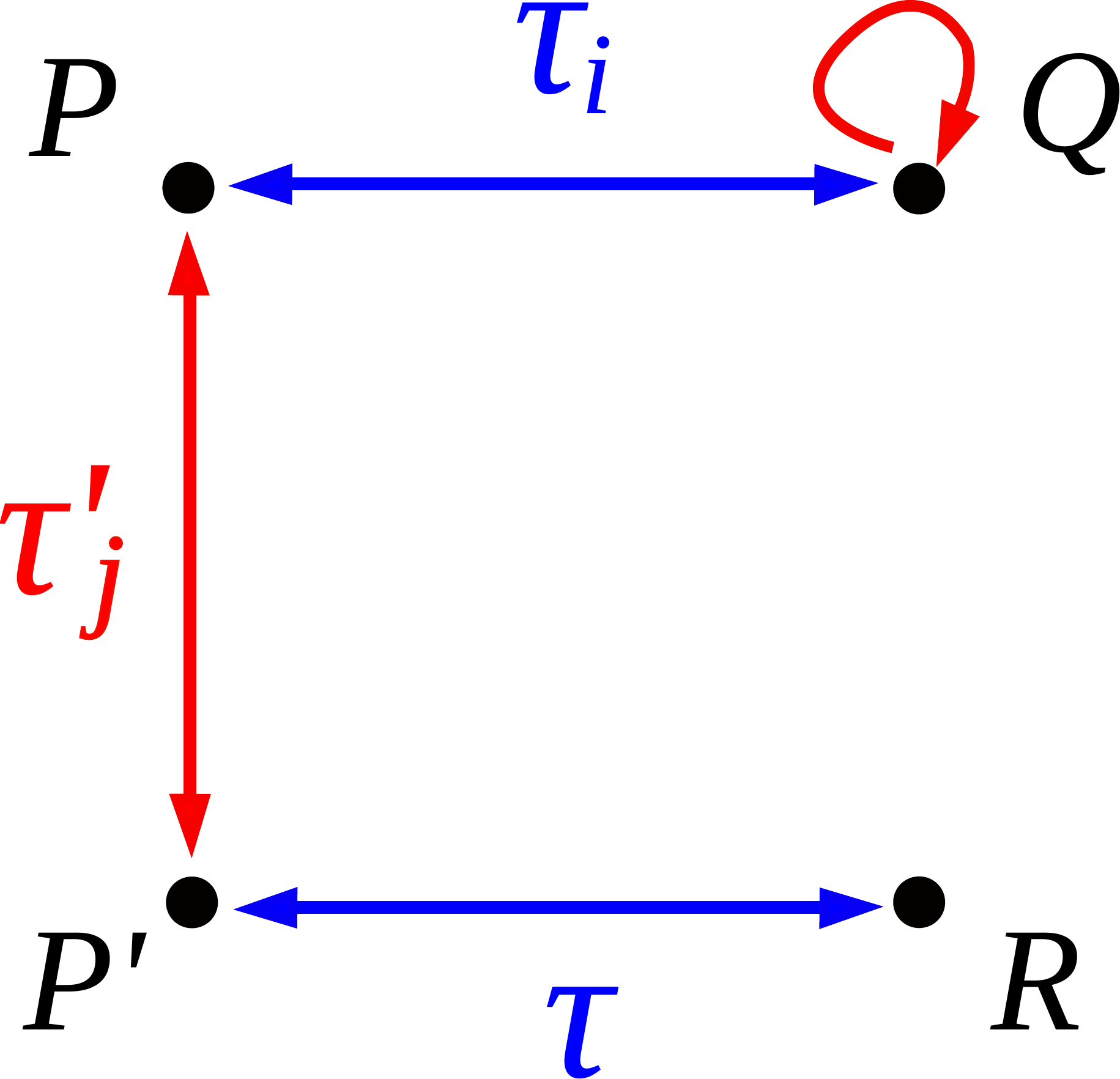}
    \caption{}\label{figure:1-1}
\end{figure}

\begin{Rmk}\label{rmk:hexagon}\textbf{(Hexagons on $I$)}
To understand the relation between complements of $\calS_1$ and $\calT_1$ consider assignments $\xi$ and $\chi$ on
$$
\calS_2 = \calS\setminus \calS_1\text{  and   } \calT_2 = \calT\setminus \calT_1.
$$ Lemma \ref{lemma:3-cycle} implies that $|\xi(\tau_i)|=2$ for all $\tau_i \in \calS_2$, and conversely $|\chi(\tau'_j)|=2 $ for all $\tau'_j \in \calT_2$.
It is interesting to see that each element $\tau_i \in \calS_2$ involves a certain \emph{hexagon} (or $6$-cycle of transpositions) on $I= \supp \beta \cap \supp \gamma$. Suppose $\tau_i\in \calS_2$ exchanges $P$ and $Q$. Let us describe the other $5$ transpositions. Let $\xi(\tau_i) =\{\tau'_P, \tau'_Q\}$ where $\tau'_P$ exchanges $P$ and $(P)\tau'_P = P_1$ and $\tau'_Q$ exchanges $Q$ and $(Q)\tau'_Q = Q_1$. Since $|\chi(\tau'_P)|=2$ and $\tau_i\in\chi(\tau'_P)$, $\chi(\tau'_P)$ contains $\tau_{P_1}\in \calS_2$ which exchanges $P_1$ and $P_2 =(P_1) \tau_{P_1}$. Similarly $\chi(\tau'_Q)$ contains two transpositions $\tau_i$ and $\tau_{Q_1}\in \calS_2$ which exchanges $Q_1$ and $Q_2 =(Q_1) \tau_{Q_1}$. One can check that $\tau_{P_1} \neq \tau_{Q_1}$. If $\tau_{P_1} = \tau_{Q_1}$ the four transpositions $\tau_i$, $\tau'_Q$, $\tau_{P_1}=\tau_{Q_1}$, and $\tau'_P$ form a square as described in Remark \ref{rmk:square_in_commuting_involutions}. It is direct to see that $\{P_1, Q_1\} \subset \supp (\beta \gamma)^3$, which contradicts that $\beta \gamma$ has order $3$. So far we have $5$ transpositions $\tau_i, \tau'_P, \tau'_{Q'}, \tau_{P_1}$, and $\tau_{Q_1}$  . Next we check that $\tau_{P_1}$ and $\tau_{Q_1}$ determine a unique transposition $\tau'_j \in \calT_2$ so that $\tau'_j \in \xi(\tau_{P_1})\cap \xi(\tau_{Q_1})$, namely $\tau'_j$ exchanges $P_2$ and $Q_2$.
Applying $\beta\gamma$ to the point $Q_2$ twice we have
\begin{align*}
&Q_2 \xrightarrow {\beta} (Q_2)\beta = (Q_2)\tau_{Q_1} = Q_1 \xrightarrow {\gamma} (Q_1)\gamma = (Q_1)\tau'_Q=Q\\
&Q \xrightarrow {\beta} (Q)\beta=(Q)\tau_i = P\xrightarrow {\gamma} (P)\gamma=(P)\tau'_P =P_1.\\
\end{align*}Since $\beta \gamma$ has order $3$ we must have $(Q_2)(\beta\gamma)^{3}=(P_1)\beta\gamma= Q_2$. The later identity means that
$$
(P_1)\tau_{P_1}\gamma =(P_2)\gamma = (P_2)\tau'_j=Q_2.
$$
Thus the transposition $\tau'_j$ exchanges $P_2$ and $Q_2$, and so it is the unique transposition of $\calT_2$ such that $\tau'_j \in \xi(\tau_{P_1})\cap \xi(\tau_{Q_1})$ as desired. Alternate composition $\beta$-and-then-$\gamma$ applied to $P$ determines $6$ points
$$
P\xrightarrow{\tau_i}Q\xrightarrow{\tau'_Q}Q_1\xrightarrow{\tau_{Q_1}}Q_2
\xrightarrow{\tau'_j}P_2\xrightarrow{\tau_{P_1}}P_1\xrightarrow{\tau'_{P'}}P.
$$ The six transpositions in order form a hexagon or $6$-cycle of transpositions: $\tau_i-\tau'_Q-\tau_{Q_1}-\tau'_j-\tau_{P_1}-\tau'_P$ (Again, one should distinguish this from a $6$-cycle of $\calH_n$.) Figure \ref{fig:hexagon} illustrates the hexagon on four points and corresponding $6$-cycle of transpositions.
\begin{figure}[h]
    \includegraphics[width=.2\textwidth]{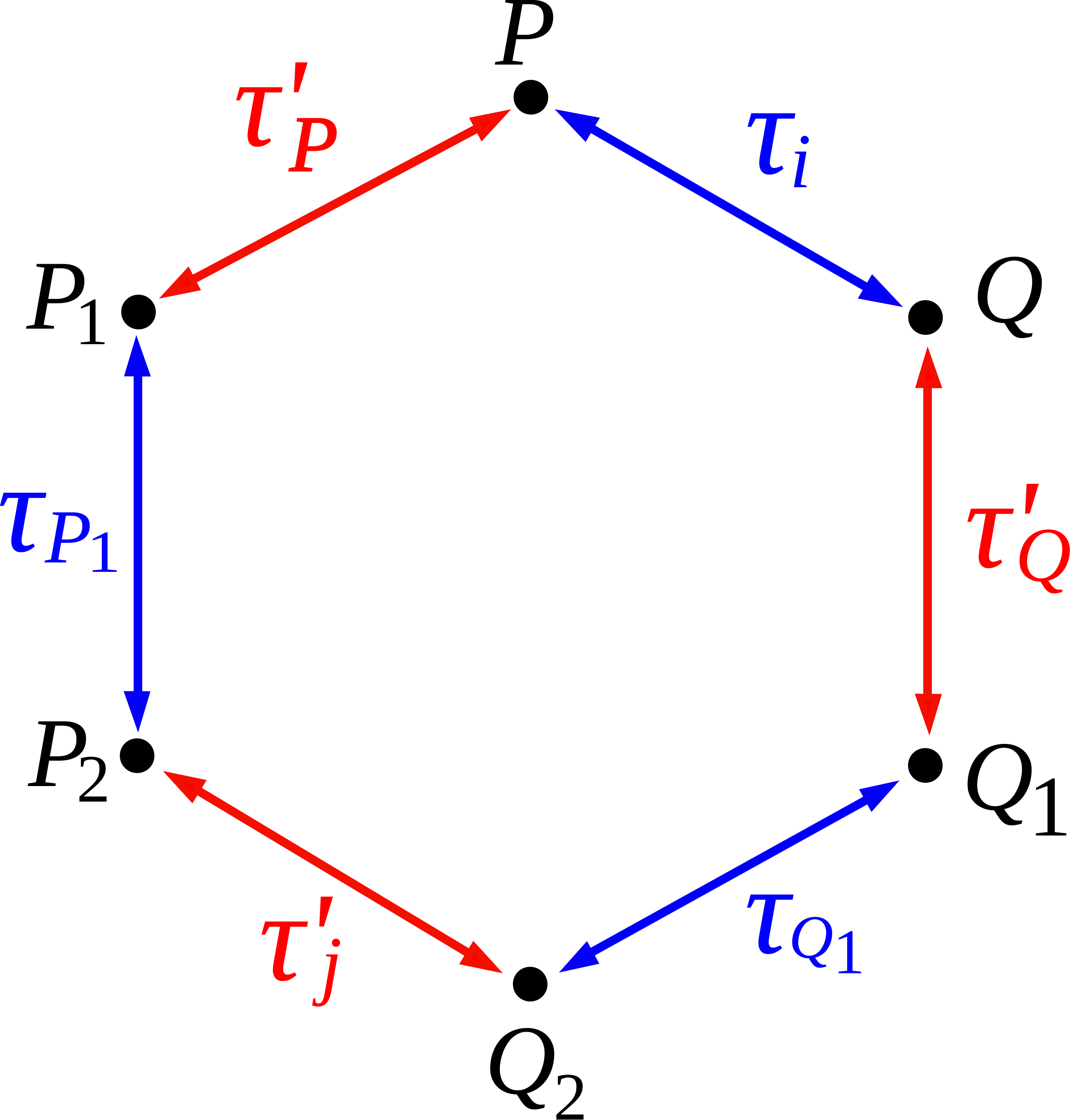}
    \caption{}
    \label{fig:hexagon}
\end{figure}
\end{Rmk}

\textbf{Partial translations.} For $g\in \calH_n$, it is useful to decompose $g$ into \emph{essential part} and \emph{finitary part}. Let $E(g)$ and $F(g)$ denote the restrictions of $g$ on $\Es g$ and $\supp g \setminus \Es g$ respectively. As an element of $\Sym_n$, the restriction of $g$ on $\Es g$ has a cycle decomposition. The set $\Es g$ is partitioned into orbits of essential points of $g$. The element $g$ restricts on each orbit to define an infinite cycle, which will be called a \emph{partial translation} of $g$. Denoting partial translations of $g$ by $\pt_{1} ,\cdots, \pt_{\ell}$, we can write $g$ as \begin{equation}\label{eq:partial_translation}
g=\pt_{1} \cdots \pt_{\ell}\cdot f
\end{equation} where $\supp f \subset \supp g \setminus \Es g$. Partial translations of $g$ commute with each other. In case $g$ has no essential points, (\ref{eq:partial_translation}) becomes $g= f$.
Note that a partial translation is not an element of $\calH_n$ in general. For example, $g_2^2$ has two partial translations, which are not eventual translations.

As an infinite cycle on $X_n$, each partial translation $\pt$ can be realized as an embedding $\bbz \hookrightarrow X_n$. Picking a base point $x_0\in \supp \pt$ we identify $\supp \pt =\{(x_0)\pt^k|k\in \bbz\}$ with $\bbz$. Let $[k]_\pt$ denote the point of $\supp \pt$ corresponding to $k\in \bbz$:
\begin{equation}\label{eq:identification_with_Z}
(x_0)\pt^k  \leftrightarrow [k]_\pt.
\end{equation}Under the identification (\ref{eq:identification_with_Z}), a partial translation $\pt$ of $g$ translates points on $\supp \pt$ by $+1$ in the above identification, i.e.,
\begin{equation}\label{eq:translation on pt}
[k]_\pt g^m =[k]_\pt \pt^m \leftrightarrow [k+m]_\pt
\end{equation}for each $m\in \bbz$.

By the definition (\ref{eq:eventual_translation}) an eventual translation $g \in\calH_n$ acts as a translation by $m_i$ on $R_i$ outside a finite set $F_g$ where $\pi(g) =(m_1, \cdots, m_n)$. We call a ray $R_i$ a \emph{source} of $g$ if $m_i<0$, and we call a ray $R_i$ a \emph{target} of $g$ if $m_i>0$. As a restriction of $g$ on one of its orbits, a partial translation $\pt$ of $g$ has unique \emph{source and target} so that $\pt$ moves points \emph{from the source towards the target}. More precisely there exist exactly two rays $R^-_\pt$ and $R^+_\pt$ in $\{R_1, \cdots, R_n\}$, and $k_0\in \bbn$ such that
\begin{equation}\label{eq:source_to_target}
[-k]_\pt \in R^-_\pt \text {   and   } [k]_\pt \in R^+_\pt
 \end{equation} for all $k\geq k_0$.

Note that the generators $g_2, \cdots, g_{n}$ of $\calH_n$ are all partial translations. Generators $g_i$'s share a unique source $R_1$ on which the actions of $g_i^{-1}$'s coincide as translations by $+1$. Moreover those generators have all distinct targets. Intuitively this behavior can be characterized as, for each pair $i\neq j$,
\begin{enumerate}\label{eq:cold_fact_for_a_couple}
\item[$(1)$] two actions of $g_i$ and $g_j$ are identical on $R_1\setminus (1,1)$,
\item[$(2)$] once $g_i$ and $g_j$ `diverge' from $(1,1)$ then they never meet again,
\item[$(3)$] the commutator $[g_i, g_j]=\alpha$ is the transposition which exchanges $(1,1)$ and $(1,2)$.
\end{enumerate}
Note that $\supp \alpha$ consists of two points which are the last two points before $g_i$ and $g_j$ diverge. It turns out that generators $\phi(g_2),\cdots, \phi(g_{n})$ of $\phi(\calH_n)$ follow the same rule provided $\phi$ is a monomorphism. See Lemmas~\ref{lemma:partial_translation} and \ref{lemma:common transp on pt}.

\begin{Example}\label{ex:mono}
Consider the monomorphism $\phi:\calH_3\to \calH_3, g_i\mapsto g_i^2$ for $i=2,3$. The generators $g_2^2$ and $g_3^2$ of $\phi(\calH_3)$ share a common source $R_1$ and their targets, $R_2$ and $R_3$ respectively, are distinct. Both $g_2^2$ and $g_3^2$ are products of two partial translations. Let $\pt_i$ and $\pt'_i$ denote the two partial transpositions of $g_i^2$ moving $(1,1)$ and $(1,2)$ respectively for $i=2,3$. Note that $R_1$ is the unique source for all four partial translations. The ray $R_2$ is the target of $\pt_2$ and $\pt'_2$, and $R_3$ is the target of $\pt_3$ and $\pt'_3$.  As in (\ref{eq:identification_with_Z}), we label $\supp \pt_2$ and $\supp \pt_3$ by $\bbz$ with a base point $(1,3)$:
$$
(1,3) \leftrightarrow [0]_{\pt_2}\text{  and  } (1,3) \leftrightarrow [0]_{\pt_3}.
$$ It is direct to check that $[k]_{\pt_2} = [k]_{\pt_3}$ for all $k\leq 0$ and so $\pt_2$ and $\pt_3$ agree on $\{[k]_{\pt_2} |k\leq 1\}$. Note that $[\pt_2, \pt_3] $ is the transposition which exchanges $(1,3) \leftrightarrow [0]_{\pt_2}$ and $(1,1) \leftrightarrow [1]_{\pt_2}$, the last two consecutive points before they diverge. The partial translations $\pt'_2$ and $\pt'_3$ follow the same rule: they agree on the source $R_1$ except $(1,2)$, and $[\pt'_2, \pt'_3]$ exchanges $(1,2)$ and $(1,4)$, the last two consecutive points before they diverge.

We also remark that two pairs of partial translations induce decomposition of the underlying set $X_3$. Let $X$ denote the orbit of $(1,1)$ under $\phi(\calH_3)$. In other words, $X$ is the union of two supports of $\pt_2$ and $\pt_3$. Similarly $\pt'_2$ and $\pt'_3$ determine a subset $X'$ which is the orbit of $(1,2)$ under $\phi(\calH_3)$. Obviously two invariant subsets provide a partition $X_3=X\sqcup X'$. The corresponding decomposition of $\phi(\calH_3)$ says that $\phi(\calH_3)$ is the disjoint union of two identical subgroups $H_3$ and $H'_3$ where $H_3$ and $H'_3$ are generated by $\pt_i$ and $\pt'_i$ respectively, $i=2,3$. The isomorphism between $H_3$ and $\calH_3$ can be described by the obvious bijection between $X$ and $X_3$. In particular, $\phi$ is injective.
\end{Example}

\begin{Lemma}\label{lemma:partial_translation}
A monomorphism $\phi:\calH_n \to \calH_n$ determines $\ell \in \bbn$ and a ray $R_l$
such that $R_l$  is a unique common source of $\phi(g_i)$,
and $\phi(g_i)$ has translation length $-\ell$ on $R_l$ for all $2\leq  i\leq n$.
Moreover targets of $\phi(g_i)$ and $\phi(g_j)$ do not share a ray if $2\leq i \neq j\leq n$.
\end{Lemma}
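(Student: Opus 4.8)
Write $h_i=\phi(g_i)$, $a=\phi(\alpha)$ and $S=\supp a$. Since $\alpha$ has order $2$ and $\phi$ is injective, $a$ has order $2$; hence $a\in\FSym_n$ and $S$ is finite and non-empty. Also each $h_i$ has infinite order, so $\pi(h_i)\ne0$ (a torsion element lies in $\FSym_n$), and in particular $\Es h_i\ne\emptyset$. If $n=2$ the conclusion is immediate --- $\pi(h_2)$ is then a nonzero vector of the form $(m,-m)\in\bbz^2$, so exactly one of $R_1,R_2$ is a source of $h_2$, which we take as $R_l$ with $\ell=|m|$, and the ``distinct targets'' clause is vacuous --- so assume $n\ge3$. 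The plan is to feed the identities $\alpha^{g_i^{-k}}=\alpha^{g_j^{-k}}$ of $(\ref{eq:r_7})$ and $[\alpha^{g_i^{k+1}},g_j]=1$ of $(\ref{eq:r_6})$, which $\phi$ preserves, into two elementary facts about an eventual translation $h$ with $\pi(h)=(m_1,\dots,m_n)$: \emph{(i)} if $m_p\ne0$ then all but finitely many points of $R_p$ lie in $\Es h$, while if $m_p=0$ then $\Es h\cap R_p$ is finite, and moreover every partial translation of $h$ runs out to infinity along a single ray --- its source --- under $h^{-m}$ and along a single ray --- its target --- under $h^{m}$, as $m\to\infty$; and \emph{(ii)} any finite-support permutation $\gamma$ commuting with $h$ has $\supp\gamma\cap\Es h=\emptyset$, since $\gamma$ fixes each infinite $h$-orbit $O\cong\bbz$ setwise and a finite-support bijection of $\bbz$ commuting with the unit shift is trivial. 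Using Lemma~\ref{lemma:transposition_on_ess_orbit}, fix for each $i$ a point $P_i\in S\cap\Es h_i$.

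First I would pin down the \emph{targets}. Let $R_{p(i)}$ be the target ray of the partial translation of $h_i$ through $P_i$, so $\pi(h_i)_{p(i)}>0$. Applying $\phi$ to $(\ref{eq:r_6})$ gives $[a^{h_i^m},h_j]=1$ for $m\ge2$ and $j\ne i$, hence by (ii) $\supp(a^{h_i^m})\cap\Es h_j=\emptyset$; but $\supp(a^{h_i^m})=Sh_i^m$ contains $(P_i)h_i^m$, which for large $m$ lies arbitrarily far out on $R_{p(i)}$. By (i) this forces $\pi(h_j)_{p(i)}=0$ for all $j\ne i$. Consequently $R_{p(2)},\dots,R_{p(n)}$ are pairwise distinct: if $p(i)=p(j)$ with $i\ne j$ then $\pi(h_i)_{p(i)}>0$ yet $\pi(h_i)_{p(j)}=0$.

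Next the \emph{source} and the \emph{common length}. Let $R_{s(i)}$ be the source ray of the partial translation of $h_i$ through $P_i$, so $\pi(h_i)_{s(i)}<0$; then $R_{s(i)}$ is none of the rays $R_{p(m)}$ (for $m\ne i$ because $\pi(h_i)_{p(m)}=0$, for $m=i$ because $\pi(h_i)_{p(i)}>0$). As $R_{p(2)},\dots,R_{p(n)}$ are $n-1$ distinct rays, exactly one ray is left over, so $R_{s(i)}$ is that ray \emph{independently of $i$ and of the choice of $P_i$}; call it $R_l$. Thus $\pi(h_i)_l<0$ for every $i$, while $\pi(h_i)_{p(m)}=0$ for $m\ne i$, so $\pi(h_i)$ is supported on $\{l,p(i)\}$ and, being balanced, has $\pi(h_i)_l=-c_i$ and $\pi(h_i)_{p(i)}=c_i$ with $c_i\in\bbn$. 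In particular $R_l$ is the unique source and $R_{p(i)}$ the unique target of $h_i$, and the targets are pairwise distinct. To see $c_i$ is the same for all $i$: substituting the $-k$ instance of $(\ref{eq:r_7})$ into the $-(k+1)$ instance shows that $a^{h_i^{-k}}$ commutes with $h_i^{-1}h_j$, so $\supp(a^{h_i^{-k}})\cap\Es(h_i^{-1}h_j)=\emptyset$ by (ii); but $\supp(a^{h_i^{-k}})=Sh_i^{-k}$ contains $(P_i)h_i^{-k}$, which runs out along $R_l$, so by (i) the $R_l$-component of $\pi(h_i^{-1}h_j)$ must be $0$, i.e.\ $c_i=c_j$. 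Setting $\ell:=c_i$ completes the proof.

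The step I expect to cost the most is fact (i): one must check that an infinite orbit of an element of $\calH_n$ really escapes to infinity along a \emph{single} ray in each time direction (this is exactly where the eventual-translation hypothesis enters, through the deflection of orbits near the origin), and one must make ``arbitrarily far out'' uniform over the finitely many points of $S$, so that for large $m$ (resp.\ $k$) the set $Sh_i^{m}$ (resp.\ $Sh_i^{-k}$) splits cleanly into a bounded part and a part lying deep inside the appropriate target (resp.\ source) ray. Granting those dynamical statements, the remaining bookkeeping --- chiefly that the $n$ rays are exhausted by $R_l$ together with the private targets $R_{p(2)},\dots,R_{p(n)}$ --- is routine.
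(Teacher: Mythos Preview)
Your proof is correct and noticeably cleaner than the paper's for this particular lemma, though it proceeds by a genuinely different route.

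The paper argues \emph{source first}: in its Step~1 it uses the identities~(\ref{eq:r_7}) and the equality of the sets $(S_\alpha)\phi(g_i)^{-k}$ on a ray to force all $\phi(g_j)$ to share a source ray $R_l$ with common translation length~$-\ell$; then in a long Step~2 it develops the ``diverging point'' picture --- for each partial translation $\pt$ of $\phi(g_i)$ there is a unique $\qt$ of $\phi(g_j)$ agreeing with $\pt$ on a half-line and never meeting it again --- and only then, in Steps~3--4, reads off the disjointness of targets and the uniqueness of the source.

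You instead argue \emph{targets first}, and the engine is your fact~(ii): a finite-support permutation commuting with $h$ misses $\Es h$. Feeding $[a^{h_i^m},h_j]=1$ into~(ii) immediately gives $\pi(h_j)_{p(i)}=0$ for $j\ne i$, so the $n-1$ target rays are private; a pigeonhole then forces the single remaining ray to be the common (and unique) source, after which the pair of identities~(\ref{eq:r_7}) at levels $k$ and $k+1$ yields $[a^{h_i^{-k}},h_j^{-1}h_i]=1$ and, via~(ii) again, the equality of the $c_i$. This avoids the orbit-by-orbit case analysis of the paper's Step~2 entirely.

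What each approach buys: yours is a short, self-contained proof of the lemma as stated. The paper's Step~2, however, establishes strictly more --- the precise way a partial translation of $\phi(g_i)$ and one of $\phi(g_j)$ coincide up to a threshold and then separate --- and that extra structure is exactly what drives the next results (Lemma~\ref{lemma:common transp on pt}, Corollary~\ref{coro:summary}, and the expanding-map machinery). So your argument is the more efficient proof of Lemma~\ref{lemma:partial_translation} in isolation, while the paper's longer proof is doing double duty for what follows.
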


\begin{proof}
The group $\calH_2$ has only one generator $g_2$ of infinite order. So $\phi(g_2)$ satisfies the above automatically. To deal with a monomorphism $\phi$ of $\calH_n$ with $3\leq n$ recall notations:
$S_\alpha= \supp \phi(\alpha)$,
$T_\alpha=$ the set of all transpositions in (\ref{eq:cycle_alpha}),
$T_i\subset T_\alpha$ consisting of transpositions which intersect $\Es(\phi(g_i))$.
Being a monomorphism, $\phi$ implies that the order of every $\phi(g_i)$ is infinite.
So $\phi(g_i)$ contains at least one partial translation when written as in (\ref{eq:partial_translation}).
Let $\calP_i$ denote the set of all partial translations of $\phi(g_i)$.

Fix $i$. Lemma \ref{lemma:transposition_on_ess_orbit} says that $T_i\neq \emptyset$, and so $\calP_i$ contains a partial translation $\pt$ such that
\begin{equation}\label{eq:initial condition}
\supp \pt \cap S_\alpha\neq \emptyset.
\end{equation}
Consider the source $R_l$ of $\pt$ with the property (\ref{eq:source_to_target}).
Since $R_l$ is also a source of $\phi(g_i)$,
$\phi(g_i)$ has translation length $-\ell<0$ on $R_l$.

\textbf{Step 1.}
In this step we show $\phi(g_j)$ has the same translation length $-\ell<0$ on $R_l$ for all $j\neq i$.
The identities (\ref{eq:r_7}) imply that two involutions $\phi(\alpha)^{\phi(g_i)^{-k}}$ and $\phi(\alpha)^{\phi(g_j)^{-k}}$ are identical for all $k\in \bbn$.
So the two involutions must share their supports,
 \begin{equation}\label{eq:translstion_by_-k}
 (S_\alpha)\phi(g_i)^{-k}=(S_\alpha)\phi(g_j)^{-k}
 \end{equation} for all $k\in \bbn$.
Since $\pt$ intersects $S_\alpha$, one can take $A\in\supp\pt\cap S_\alpha\ne\emptyset$.
By \eqref{eq:source_to_target}, we have
$$
(A)\pt^{-k}\in\supp\pt\cap R_l
$$
for all $k\geq k_0$ where $k_0\in \bbn$.
Hence $(A)\pt^{-k}\in (S_\alpha)\pt^{-k}\cap R_l$.
Moreover, $(S_\alpha)\pt^{-k}\subset (S_\alpha)\phi(g_i)^{-k}$ for each integer $k$
because $\pt$ is the restriction of $\phi(g_i)$ on $\supp \pt$.
Therefore $(S_\alpha)\phi(g_i)^{-k}\cap R_l$ is non-empty for all but finitely many $k$.
The identities (\ref{eq:translstion_by_-k}) imply
\begin{equation}\label{eq:translstion_on_R_phi}
(S_\alpha)\phi(g_i)^{-k}\cap R_l=(S_\alpha)\phi(g_j)^{-k}\cap R_l
\end{equation}
for all $k\in \bbn$.
The identities (\ref{eq:translstion_on_R_phi}) force $\phi(g_j)$
to have the same translation length $-\ell<0$ on $R_l$.
Consider the two smallest balls on $X_n$ (centered at the origin) which contain two sets in (\ref{eq:translstion_on_R_phi}) respectively. If $\phi(g_j)$ moves points on $R_l$ (up to a finite set) by $\ell'\neq -\ell$, the two balls have to have different sizes for infinitely many $k$. This means $\phi(g_i)$ and $\phi(g_j)$ fail to satisfy the identities (\ref{eq:translstion_on_R_phi}). Therefore $\phi(g_i)$ and $\phi(g_j)$ have the same translation length $-\ell<0$ on $R_l$. Since $i$ and $j$ were arbitrary the first assertion of the Lemma is verified. The uniqueness of the ray will be verified in the last step.

\textbf{Step 2.}
Identify $\supp \pt$ with $\{[k]_\pt:k\in \bbz\}$ as described in (\ref{eq:identification_with_Z}).
We will show that there exists $k_0 \in \bbz$ such that for each $j\neq i$,
$\pt$ intersects only one partial translation $\qt \in \calP_j$ satisfying
\begin{equation}\label{eq:diverge}
\pt=\qt \text{  on  } \{[k]_{\pt}: k\leq k_0 \}
\end{equation} and
\begin{equation}\label{eq:common_support}
\supp \pt \cap \supp \qt = \{[k]_{\pt}: k \leq k_0+1 \}.
\end{equation}

From Step $1$ we see that actions of $\phi(g_j)$ and $\phi(g_i)$ are identical on $R_l$ as a translation by $-\ell<0$ up to a finite set. In particular, there exists $k_1 \in \bbz$ such that
\begin{equation}\label{eq:identical action of i and j}
(P)\phi(g_i) = (P)\phi(g_j)
\end{equation} for all $P \in\{[k]_\pt: k \leq k_1\}$. The orbit of (any such point) $P$ under $\phi(g_j)$ determines a partial translation $\qt$ as $\pt$ is the restriction of $\phi(g_i)$ on the orbit of $P$ under $\phi(g_i)$.
From the identity (\ref{eq:identical action of i and j}) it is immediate to check that $\qt$ satisfies
\begin{equation}\label{eq:map pt}
\qt=\pt \text{  on  } \{[k]_\pt: k \leq k_1\}.
\end{equation}
We first claim that $\pt \neq \qt$. If $\pt=\qt$ then $\phi(\alpha)$ restricted to $\supp \pt$ is the identity map since
$$(P)\phi(g_i) \phi(g_j) = (P)\pt \qt =(P) \pt^2 =(P)\qt\pt=(P)\phi(g_j) \phi(g_i)
$$ for all $P \in \supp \pt$. So $\pt=\qt$ implies that $\pt$ does not intersect $\phi(\alpha)$. However this contradicts the condition (\ref{eq:initial condition}).

From the condition (\ref{eq:map pt}) together with $\pt \neq \qt$ we can consider the largest $k_2\geq 0$ such that
$$
([k_1+k_2]_\pt)\pt = ([k_1+k_2]_\pt)\qt,
$$
or equivalently $k_2$ is the smallest number such that
$$
([k_1+k_2+1]_\pt)\pt \neq ([k_1+k_2+1]_\pt)\qt.
$$
It is obvious that $k_0= k_1+k_2$ satisfies (\ref{eq:diverge})
and $\supp \pt \cap \supp \qt \supset \{[k]_{\pt}: k \leq k_0+1 \}$.
For the reverse inclusion we assume $[k]_\pt \in  \supp \qt$ with $k\geq k_0+2$
and then draw a contradiction using the identities (\ref{eq:r_6}). Observe that $[k_0]_\pt \in S_\alpha$. By definition of $k_0$, $[k_0]_\pt$ satisfies
$$
 ([k_0]_\pt)\phi(g_i) \phi(g_j) = ([k_0]_\pt)\pt \phi(g_j) = ([k_0+1]_\pt) \phi(g_j) = ([k_0+1]_\pt) \qt
$$
which is distinct from
$$
 ([k_0+1]_\pt) \pt= ([k_0+1]_\pt) \phi(g_i)=([k_0]_\pt)\qt \phi(g_i)  = ([k_0]_\pt)\phi(g_j)\phi(g_i).
$$
So there exists a transposition $\tau \in T_\alpha$ which moves $[k_0]_\pt$, and hence
\begin{equation}\label{eq:first_pt}
 [k_0]_\pt \in S_\alpha.
\end{equation}
Moreover the condition (\ref{eq:diverge}) implies that $[k]_\pt \notin S_\alpha$ for all $k< k_0$, i.e., $k_0$ is the smallest number in
\begin{equation}\label{eq:pt and S}
N_\pt=\{k\in \bbz : [k]_\pt \in S_\alpha\}.
\end{equation}

The identities in (\ref{eq:r_6}) imply that $[\beta_k,\phi(g_j)]=1$ for all $k\geq k_0+2$ where $\beta_k$ is the conjugation defined by
\begin{equation}\label{eq:beta_translated}
\beta_k:=\phi(\alpha)^{\phi(g_i)^{k-k_0}}.
\end{equation}
To complete the proof we show that $\phi(g_j)$ does not commute with  $\beta_k$ under the assumption $[k]_\pt \in \supp \qt$ for some $k\geq k_0+2$. Our claim follows basically from that the involution $\beta_k\in \FSym_n$ intersects $\qt$. Since the cycle decomposition of $\beta_k$ contains $\tau_k:=\tau^{\phi(g_i)^{k-k_0}}$,
$\beta_k$ intersects $\qt$ at the point
$$
([k_0]_\pt)\phi(g_i)^{k-k_0} = ([k_0]_\pt)\pt^{k-k_0} = [k_0+(k-k_0)]_\pt = [k]_\pt
$$
by the observation (\ref{eq:translation on pt}).

If $\supp \tau_k \subset \supp \qt$ then we can apply a similar argument used in Corollary \ref{cor:normal subgroup} which shows that an infinite order element $g$ does not commute with a transposition $\beta$ if $\supp \beta\subset \Es g$. Identify $\supp \qt$ with $\bbz$ with an appropriate base point so that $[k]_\pt = [k]_\qt$. From $[k]_\qt \in \supp \tau_k$ and the minimality of $k_0 \in N_\pt$ we need to consider two cases: $\tau_k =([k]_\qt,\, [k+1]_\qt)$ or $\tau_k =([k]_\qt,\, [k']_\qt)$ with $k'\geq k+2$. For the first case we check that $([k]_\qt)\beta_k \phi(g_j) \neq ([k]_\qt)\phi(g_j)\beta_k$ by
$$
[k]_\qt\xrightarrow{\beta_k}([k]_\qt)\tau_k = [k+1]_\qt \xrightarrow{\phi(g_j)}([k+1]_\qt)\qt=[k+2]_\qt,
$$
$$
[k]_\qt\xrightarrow{\phi(g_j)} ([k]_\qt)\qt = [k+1]_\qt\xrightarrow{\beta_k}
([k+1]_\qt)\tau_k = [k]_\qt.
$$ So $ \supp [\beta_k, \phi(g_j)] \ni [k]_\qt $ is nontrivial, and hence $[\beta_k, \phi(g_j)] \neq 1$. In case  $\tau_k$ exchanges $[k]_\pt$ and $[k']_\qt$ with $k'\geq k+2$ we check $[\beta_k, \phi(g_j)]$ applied to $[k]_\qt$;
$$
[k]_\qt\xrightarrow{\beta_k}([k]_\qt)\tau_k = [k']_\qt \xrightarrow{\phi(g_j)}([k']_\qt)\qt=[k'+1]_\pt \xrightarrow{\beta_k^{-1}}([k'+1]_\qt)\beta_k^{-1} \xrightarrow{\phi(g_j)^{-1}}([k'+1]_\qt)\beta_k^{-1} \phi(g_j)^{-1}.
$$If $\beta_k$ commutes with $\phi(g_j)$ then we have to have $([k'+1]_\qt)\beta_k^{-1} \phi(g_j)^{-1} = [k]_\qt$ or equivalently
$$
([k'+1]_\qt)\beta_k =( [k]_\qt)\phi(g_j)=[k+1]_\qt.
$$ In other words $[\beta_k, \phi(g_j)] = 1$ implies that $\beta_k$ contains a transposition exchanging $[k+1]_\qt$ and $[k'+1]_\qt$. Inductively one applies the same argument to show that $\beta_k$ contains infinitely many transpositions in its cycle decomposition. Consequently $\supp \beta_k$ is an infinite set provided $[\beta_k, \phi(g_j)] = 1$. However every element of $\FSym_n$ must have a finite support.

We can apply an analogous argument when $\tau_k$ exchanges $[k]_\qt$ and $P \notin \supp \qt$. Our claim is that $\beta_k$ has an infinite support if it commutes with $\phi(g_j)$. From $\beta_k \phi(g_j) = \phi(g_j) \beta_k$ and
$$
[k]_\qt\xrightarrow{\beta_k}([k]_\qt)\tau_k = P \xrightarrow{\phi(g_j)}(P)\phi(g_j),
$$
$$[k]_\pt \xrightarrow{\phi(g_j)}([k+1]_\qt) \xrightarrow{\beta_k}([k+1]_\qt)\beta_k
$$we see that $(P)\phi(g_j) = ([k+1]_\qt)\beta_k$. Since $\supp \qt$ is an $\phi(g_j)$-invariant subset of $\supp \phi(g_j)$, $(P)\phi(g_j) \notin \supp \qt$. In particular $[k+1]_\qt \in \supp \beta_k$. By induction argument as before, we can show that $\supp \beta_k$ is an infinite set, which is a contradiction.

So far we have found a partial translation $\qt \in \calP_j$ and $k_0 \in \bbz$ with the properties (\ref{eq:diverge}) and (\ref{eq:common_support}). For the uniqueness of $\qt$ suppose $\pt$ intersects $\qt' \in \calP_j$ with $\qt' \neq \qt$. Since $\qt$ does not intersect $\qt'$, if $[k]_\pt \in \supp \qt'$ then $k\geq k_0+2$ by (\ref{eq:diverge}). One can apply the same argument as above to show that $\beta_k$ defined in (\ref{eq:beta_translated}) does not commute with $\qt'$, which contradicts (\ref{eq:r_6}).

We remark that the smallest number $k_0 \in N_\pt$ does not depend on $j$. This follows from the relation $\alpha=[g_i,g_j]$ of (\ref{eq:relations}) which implies that $\phi(\alpha)=\phi([g_i, g_{j'}])=\phi([g_i, g_{j}])$ for pairwise distinct $i,j$ and $j'$ in $\{2,\cdots, n\}$.
The minimality of $k_0$ in $N_\pt$ implies that $\phi(g_{j'})$ determines a unique partial translation $\qt_{j'}\in \calP_{j'}$ which agrees with $\pt$ on $\{[k]_\pt:k\leq k_0\}$ and that $([k_0]_\pt)\qt_{j'}^2 \neq ([k_0]_\pt)\pt^2$. In other words $k_0$ satisfies identities (\ref{eq:diverge}) and (\ref{eq:common_support}) where $\qt$ is replaced by $\qt_{j'}$.

\textbf{Step 3.} In this step we verify the second part of the Lemma. For $2\leq i\leq n$, let $\frakT_i$ denote the set of all targets of $\phi(g_i)$, i.e., $\phi(g_i)$ has a positive translation length on a ray $R_k \in \frakT_i$. If $\frakT_i$ and $\frakT_j$ share a ray $R_k$ then $\phi(g_i)$ and $\phi(g_j)$ have positive translation lengths on $R_k$. Using the (positive) least common multiple of the two translation lengths one can find $\pt\in \calP_i$ and $\qt \in \calP_j$ such that $\supp\pt \cap \supp \qt$
contains infinitely many points on the target $R_k$. However this pair of partial translations fail to satisfy the conditions (\ref{eq:diverge}) and (\ref{eq:common_support}) of Step $2$. Therefore $\phi(g_i)$ and $\phi(g_j)$ can not share any ray in their targets provided $i\neq j$.

\textbf{Step 4.} From Step $1$, we know that there exists a ray $R_l$ which is a source of $\phi(g_i)$ for all $2\leq i\leq n$. So each target $\frakT_i$ is a subset of $\{R_1, \cdots, R_n\} \setminus \{R_l\}$. The only way for the targets $\frakT_2, \cdots, \frakT_{n}$ to be pairwise disjoint is when they are all distinct singleton sets. A monomorphism $\phi$ defines a bijection $\gamma:\{2,\cdots, n\} \to \{1, \cdots, \hat{l}, \cdots, n\}$ such that
\begin{equation}\label{eq:permutation on n-1}
\frakT_i = \{R_{\gamma(i)}\}
\end{equation} for $2\leq i\leq n$. Now it is obvious that $R_l$ is only ray on which $\phi(g_i)$ has a negative translation length for all $i$. Consequently $\phi$ determines a unique ray $R_l$ which is a common source of $\phi(g_i)$ for all $i$.
\end{proof}

As we checked in Step $2$ in the proof of Lemma \ref{lemma:partial_translation}, if two partial translations $\pt\in \calP_i$ and $\qt \in \calP_j$ ($i \neq j$) intersect then there exists a unique point $[k_0]_\pt$ satisfying conditions (\ref{eq:diverge}) and (\ref{eq:common_support}). Intuitively those two conditions can be interpreted as (1) for each $\pt \in \calP_i$ there exists $\qt\in \calP_j$ such that $\pt$ and $\qt$ are identical all the way up to $[k_0]_\pt$, (2) once $\pt$ develops a different orbit from $[k_0+1]_\pt$, $\pt$ never intersects $\qt$ again. Compare the above with the characterization $(1)$ and $(2)$ of the generators of $\calH_n$ in page~\pageref{eq:cold_fact_for_a_couple}. With this intuition, let us call the point $[k_0+1]_\pt$ \emph{the diverging point} of $\pt$ and denote it by $D_\pt$. For all partial translations in $\calP=\cup _{i=2}^{n}\calP_i$ we label those supports with $\bbz$ appropriately such that
\begin{equation}\label{eq:convention}
D_\pt =[0]_\pt
\end{equation}for each $\pt\in \calP$. From now on let us use the above default labeling for $\supp \pt$ for all $\pt \in \calP$.


\begin{Lemma}\label{lemma:common transp on pt}Let $i \in \{ 2, \cdots, n\}$.
Each partial translation $\pt$ of $\phi(g_i)$ intersects exactly one transposition $\tau_\pt\in T_i$ which exchanges $[-1]_\pt$ and $[0]_\pt$.
\end{Lemma}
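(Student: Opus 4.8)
The plan is to combine the two structural facts established in Lemma~\ref{lemma:partial_translation} and Lemma~\ref{lemma:transposition_on_ess_orbit} with the analysis of the diverging point. First I would recall what we already know about $\pt \in \calP_i$: by Step~2 of Lemma~\ref{lemma:partial_translation}, for each $j \neq i$ there is a unique $\qt \in \calP_j$ intersecting $\pt$, and under the default labeling \eqref{eq:convention} we have $D_\pt = [0]_\pt$, with $\pt = \qt$ on $\{[k]_\pt : k \leq -1\}$ and $\supp\pt \cap \supp\qt = \{[k]_\pt : k \leq 0\}$. The key computation from that proof (see \eqref{eq:first_pt} and the surrounding discussion) shows that $[-1]_\pt \in S_\alpha$: indeed, since $\pt$ and $\qt$ diverge exactly at the step from $[-1]_\pt$ to $[0]_\pt$, applying $\phi(g_i)\phi(g_j)$ versus $\phi(g_j)\phi(g_i)$ to $[-1]_\pt$ gives distinct outputs, so $\phi(\alpha) = [\phi(g_i), \phi(g_j)]$ moves $[-1]_\pt$; hence there is a transposition $\tau_\pt \in T_\alpha$ moving $[-1]_\pt$.

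Next I would identify the partner of $[-1]_\pt$ under $\tau_\pt$. The point $[-1]_\pt$ is an essential point of $\phi(g_i)$ (it lies on $\supp\pt$), so by the argument in the proof of Lemma~\ref{lemma:transposition_on_ess_orbit}, the transposition $\tau_\pt$ must exchange two \emph{essential} points of $\phi(g_i)$; call the other one $Q$. I then want to show $Q = [0]_\pt$. For this I would use the explicit action: chasing $[-1]_\pt$ through the commutator $[\phi(g_i),\phi(g_j)]$, one gets $[-1]_\pt \xrightarrow{\phi(g_i)} [0]_\pt$, then $[0]_\pt \xrightarrow{\phi(g_j)} [1]_\qt \neq [1]_\pt$ (divergence), then applying $\phi(g_i)^{-1}$ and $\phi(g_j)^{-1}$; unwinding shows $([-1]_\pt)\phi(\alpha) = [0]_\pt$ precisely because $\qt$ still agrees with $\pt$ at $[0]_\pt$ (it is in the common support) but disagrees at $[1]$. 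Since $\phi(\alpha)$ is an involution expressed as a product of commuting transpositions, the unique transposition moving $[-1]_\pt$ must send it to $[0]_\pt$, i.e. $\tau_\pt = ([-1]_\pt, [0]_\pt)$. Because $[0]_\pt \in \supp\pt \subset \Es\phi(g_i)$, we get $\tau_\pt \in T_i$ by definition of $T_i$.

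Finally I would prove \emph{exactness}: that $\pt$ intersects no transposition of $T_\alpha$ other than $\tau_\pt$. A transposition $\tau' \neq \tau_\pt$ intersecting $\pt$ would move some $[k]_\pt$ with $k \neq -1, 0$. If $k \leq -1$ (so in fact $k \leq -2$), then $[k]_\pt \in S_\alpha$ contradicts the statement from Step~2 that $[k]_\pt \notin S_\alpha$ for $k < k_0 = 0$ (equivalently, $k_0$ is the smallest element of $N_\pt$ in \eqref{eq:pt and S}). If $k \geq 1$, I would run the $\beta_k$-argument from Lemma~\ref{lemma:partial_translation}: conjugating $\phi(\alpha)$ by $\phi(g_i)^{k+1}$ (or the appropriate power shifting $[-1]_\pt$ to $[k]_\pt$) produces an involution $\beta$ with $\supp\phi(\alpha) \cap \supp\beta \ni [k]_\pt$, and the commutation identity \eqref{eq:r_8} forces, via Lemma~\ref{lemma:comm_involution}, that this intersection be $\phi(\alpha)$- and $\beta$-invariant; iterating shows $S_\alpha$ is infinite, contradicting $\phi(\alpha) \in \FSym_n$. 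The main obstacle is bookkeeping in this last step: one must be careful that the labeling convention \eqref{eq:convention} and the identification \eqref{eq:identification_with_Z} are applied consistently when translating "the transposition moving $[-1]_\pt$" under powers of $\phi(g_i)$, and that the exclusion of $k \geq 1$ genuinely uses only the already-available identities \eqref{eq:r_6}, \eqref{eq:r_8} and not circular appeals to the present lemma. I expect nailing down $Q = [0]_\pt$ (rather than just "$Q$ essential") to require the most care, since it is exactly the point where the diverging-point structure feeds back into the cycle structure of $\phi(\alpha)$.
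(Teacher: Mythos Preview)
Your proposal has a genuine gap at the step you yourself flag as requiring the most care: the identification $Q = [0]_\pt$. The commutator chase does not close. After $[-1]_\pt \xrightarrow{\phi(g_i)} [0]_\pt \xrightarrow{\phi(g_j)} [1]_\qt$, you must apply $\phi(g_i)^{-1}$ to $[1]_\qt$, but $[1]_\qt \notin \supp\pt$ (since $\supp\pt \cap \supp\qt = \{[k]_\pt : k \le 0\}$), and nothing established so far tells you what $\phi(g_i)^{-1}$ does to this point; it could be fixed, or moved by a finite cycle of $\phi(g_i)$, or moved by a \emph{different} partial translation $\pt' \in \calP_i$. The fact you seem to want---that $[1]_\pt$ is fixed by $\phi(g_j)$ and that $\pt$ meets no other cycle of $\phi(g_j)$---is exactly Corollary~\ref{coro:summary}, which is proved \emph{from} the present lemma, so invoking it here would be circular.

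Consequently you never rule out the possibility that $\tau_\pt$ exchanges $[-1]_\pt$ with a point on another partial translation $\pt' \in \calP_i$ (what the paper calls a Type~III transposition). This is the crux of the lemma and where the paper expends most of its effort: it organizes the transpositions of $T_i$ into Types I (consecutive on one $\pt$), II (non-consecutive on one $\pt$), and III (across two partial translations), and eliminates II and III separately. Type~II falls to the square argument of Remark~\ref{rmk:square_in_commuting_involutions} together with the minimality of $-1$ in $N_\pt$; Type~III requires a substantially longer argument producing a cycle $\pt_0 \to \pt_1 \to \cdots \to \pt_m \to \pt_0$ of partial translations linked by Type~III transpositions and then showing, via the identities \eqref{eq:r_6}, that $\phi(g_i)$ would be forced to contain infinitely many disjoint $(m{+}1)$-cycles in its cycle decomposition. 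Your exactness step (ruling out $[k]_\pt \in S_\alpha$ for $k \ge 1$) is essentially the paper's Claim~I, but that claim \emph{presupposes} that the transposition at $[-1]_\pt$ is already Type~I; and even there, the case $k=1$ needs the braid relation $r_2$ and the hexagon argument of Remark~\ref{rmk:hexagon}, not just the commutation identities \eqref{eq:r_8} you cite.
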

\begin{proof}
Any transposition $\tau \in T_i$ is one of the three types:
\begin{enumerate}
\item [] Type I: $\tau$ exchanges two consecutive points of one partial transposition of $\calP_i$.
\item [] Type II: $\tau$ exchanges two non-consecutive points of one partial transposition of $\calP_i$.
\item [] Type III: $\tau$ intersects two distinct partial transpositions of $\calP_i$.
\end{enumerate}We show that each $\pt \in \calP_i$ can intersect only one transposition $\tau \in T_i$ of type I. From (\ref{eq:first_pt}) we see that $k_0 =-1$ is indeed the smallest integer in $N_\pt$ defined in (\ref{eq:pt and S}).  We first focus on the case $n\geq3$.

\textbf{Claim I:} If $\pt \in \calP_i$ intersects $\tau$ of type I, then $\pt$ does not intersect any other transposition of $T_i$. The first case to consider is when $\tau$ exchanges $[-1]_\pt$ and $[0]_\pt$ where $-1$ is the smallest number in $N_\pt$. Suppose $\pt$ intersects $\tau' \in T_\alpha$ of any type whose support contains $[q]_\pt$. Since $\tau$ and $\tau'$ does not intersect, $q\geq 1$.

In case $q=1$, we use the relation $r_2$ of (\ref{eq:relations}) and the hexagon argument described in Remark \ref{rmk:hexagon}. Two involutions $\phi(\alpha)$ and $\beta:=\phi(\alpha)^{\phi(g_i)}$ satisfy
$$
(\phi(\alpha) \beta)^3 =1.
$$ Observe that $\beta$ contains a transposition $\tau^{\phi(g_i)} =\tau^{\pt} = ([0]_\pt, [1]_\pt)$ which intersects two transpositions $\tau$ and $\tau'$ of $\phi(\alpha)$. Therefore there exists a hexagon on $6$ points
$$
[-1]_\pt\xrightarrow{\tau}[0]_\pt\xrightarrow{\tau^{\pt}}[1]_\pt
\xrightarrow{\tau'}Q\xrightarrow{\beta}
(Q)\beta\xrightarrow{\phi(\alpha)}S\xrightarrow
{\beta}[-1]_\pt
$$ where $Q=([1]_\pt)\tau'$ and $S=([-1]_\pt)\beta$. In particular, $\supp \beta$ contains $[-1]_\pt$. This means that $S_\alpha$ contains $[-2]_\pt$, a contradiction to the minimality of $-1 \in N_\pt$.

Next, we may assume $q \geq 2$ and $q$ is the smallest number in $N_\pt\setminus \{-1, 0\}$. Consider the conjugation defined by
\begin{equation*}
\beta=\phi(\alpha)^{\phi(g_i)^{q}}.
\end{equation*} Since $q\geq2$ the identity (\ref{eq:r_8}) says that $\phi(\alpha)$ and $\beta$ commute. Two involutions $\phi(\alpha)$ and $\beta$ move $[q]_\pt$ to distinct points since
\begin{equation*}\label{eq:distinct points}
[q]_\pt \phi(\alpha)= ([q]_\pt) \tau' \neq [q-1]_\pt =([q]_\pt)\beta =([q]_\pt)\tau^{\pt^{q}}=[q-1]_\pt
\end{equation*} where the inequality follows from the minimality of $q$ in $N_\pt\setminus \{-1, 0\}$.
We are in good position to apply the `square argument' described in Remark \ref{rmk:square_in_commuting_involutions}. The square on $I=S_\alpha \cap \supp \beta$ involving $4$ points
\begin{equation*}
[q-1]_\pt\xrightarrow{\beta}[q]_\pt\xrightarrow{\tau'}([q]_\pt)\tau'\xrightarrow
{\beta}([q-1]_\pt)\phi(\alpha) \xrightarrow{\phi(\alpha)}[q-1]_\pt
\end{equation*} implies that $[q-1]_\pt$ must belong to $N_\pt\setminus \{-1, 0\}$. Again, this contradicts to the minimality of $q$.

\textbf{Claim II:} If $\pt \in \calP_i$ intersects $\tau \in T_i$ then $\tau$ can not be of type II. First we argue that $\supp \tau$ dose not contain $[-1]_\pt$ if $\tau$ is of type II. Assume $\tau= ([-1]_\pt, [q]_\pt)$ for some $q\geq 1$. The involution $\beta$ defined by
\begin{equation*}
\beta=\phi(\alpha)^{\phi(g_i)^{q+1}}
\end{equation*} contains $\tau^{\pt^{q+1}}$ in its cycle decomposition. So $\beta$ moves $[q]_\pt$ to $[q+(q+1)]_\pt $ but $\phi(\alpha)$ moves $[q]_\pt$ to $[q-(q+1)]_\pt = [-1]_\pt$. Therefore we can apply the square argument to see that $[-1]_\pt$ belongs to $\supp \beta$, or equivalently $[-1-(q+1)]_\pt \in N_\pt$ as in the proof of Claim I. However, the minimality of $-1$ in $N_\pt$ says that this can not happen.

To complete the proof, we need to rule out the case when $\pt$ intersects $\tau = ([q]_\pt, [r]_\pt)$ for some $q\geq 0$ and $r\geq q+2$. Recall that $-1$ is the smallest number in $N_\pt$, and so $T_i$ contains a transposition $\tau'$ which moves $[-1]_\pt$. From the discussion so far we may assume that $\tau'$ is of type III. Consequently the point $[-1]_\pt$ gets mapped to distinct points by $\phi(\alpha)$ and $\beta$ defined by
\begin{equation*}
\beta=\phi(\alpha)^{\phi(g_i)^{-(r+1)}}
\end{equation*}since $([-1]_\pt)\beta =[-1-(r-q)]_\pt\in \supp \pt$ but $([-1]\pt)\phi(\alpha) = ([-1]_\pt)\tau' \notin \supp \pt$. The square argument applied to two involutions $\phi(\alpha)$ and $\beta$ implies that the point $[-1-(r-q)]_\pt$ belongs to $N_\pt$, which can not happen.

\textbf{Claim III:} If $\pt$ intersects $\tau \in T_i$ of type III then $\tau$ fixes the point $[-1]_\pt$ where $-1$ is the smallest number in $N_\pt$. Observe that Claim III together with Claim II implies that the transposition $\tau_\pt$ of $T_i$, which moves $[-1]_\pt$, must be of type I. Consequently $\pt$ intersects only transposition $\tau_\pt$ which exchanges $[-1]_\pt$ and $[0]_\pt$, completing the proof for the Lemma.

To this end we draw a contradiction from the assumption that $\pt$ intersects $\tau \in T_i$ of type III with $[-1]_\pt \in \supp\tau$. Since $\tau$ is of type III, it relates $\pt$ to a partial translation $\pt_1 \in \calP_i$ that $\tau$ intersects. From Step II of Lemma \ref{lemma:partial_translation} we know that there exists a smallest number in
$$
N_{\pt_1} = \{k\in \bbz : [k]_{\pt_1} \in S_\alpha\}.
$$ By the convention \ref{eq:convention} we can further say that the smallest number is $-1$ (after pre-composing an appropriate translation on $\bbz$ to $\bbz \hookrightarrow \supp \pt_1$ if necessary). We can show that if $([-1]_\pt)\tau = [k_1]_{\pt_1}$ then $k_1\geq 0$. If $k_1 = -1$, we have to have
$$
([-1]_\pt)\pt \qt = ([-1]_\pt)\phi(g_i) \phi(g_j)  = ([-1]_{\pt_1})\phi(g_j) \phi(g_i) =([-1]_{\pt_1}) \qt_1 \pt_1
$$ where $\qt$ and $\qt_1$ are unique partial translations of $\calP_j$ which intersect $\pt$ and $\pt_1$ respectively. Consequently $\qt$ intersects both of $\pt$ and $\pt_1$, a contradiction to the uniqueness of $\qt$. Therefore $\tau$ fixes $[-1]_{\pt_1}$ and so $k_1 \geq 0$.

The partial translation $\pt_1$ intersects $\tau$ of type III and so Claim I implies that $\pt_1$ can only intersect transpositions of type III. In particular the transposition $\tau_1 \in T_i$ with $[-1]_{\pt_1} \in \supp \tau_1$ must be of type III. Observe that $\tau_1$ relates $\pt_1$ to $\pt_2 \in \calP_i \setminus \{\pt_1\}$ as $\tau$ relates $\pt$ to $\pt_1$. As argued above one can check that $\tau_1$ intersects $\pt_2$ at a point $[k_2]_{\pt_2}$ with $k_2>-1$ where $-1$ is the smallest number in $N_{\pt_2}= \{k\in \bbz : [k]_{\pt_2} \in S_\alpha\}$. Inductively we can consider a cycle of partial translations
\begin{equation}\label{eq:cycle pt}
\pt_0 = \pt \xrightarrow{\tau_0= \tau}\pt_1\xrightarrow{\tau_1} \cdots \xrightarrow
{\tau_{m-1}}\pt_{m}\xrightarrow
{\tau_{m}}\pt_0.
\end{equation} Let $k_1, \cdots, k_{m}$ be integers so that $\tau_r = ([-1]_{\pt_r}, [k_{r+1}]_{\pt_{r+1}})$ for $0\leq r\leq m-1$ and $\tau_m = ([-1]_{\pt_m}, [k_m]_{\pt_0})$.

Next we show that $k_r = 0$ for $1\leq r\leq m$. Assume the contrary that, for example, $k_1 \geq 1$. We can apply the square argument to two commuting involutions $\phi(\alpha)$ and $\beta$ defined by
\begin{equation*}
\beta=\phi(\alpha)^{\phi(g_i)^{k_1+1}}
\end{equation*} since $([k_1]_{\pt_1})\phi(\alpha) = [-1]_{\pt_0} \neq [k_2+k_1+1]_{\pt_2} = ([k_1]_{\pt_1})\beta$. Consequently $[-1]_{\pt_0} \in \supp \beta$ or equivalently $-1-(k_1+1) \in N_{\pt_0}$, contradicting to the minimality of $-1 \in N_{\pt_0}$. A similar argument shows that $k_r = 0$ for all $r$.

So far we have shown that if $\pt$ intersects $\tau\in T_i$ of type III then there exists a cycle of partial translations (\ref{eq:cycle pt}) which are related by transpositions $\tau_r = ([-1]_{\pt_r}, [0]_{\pt_{r+1}})$ for $0\leq r\leq m-1$ and $\tau_m = ([-1]_{\pt_m}, [0]_{\pt_0})$. Next we show that $\phi(g_i)$ restricts on the set $C_1=\{[1]_{\qt_0}, \cdots, [1]_{\qt_m}\}$ to define a $(m+1)$-cycle
$$
\sigma_1:[1]_{\qt_m} \to [1]_{\qt_{m-1}} \to\cdots \to [1]_{\qt_0}\to [1]_{\qt_m}.
$$ For each $0\leq r\leq m-1$, the transposition $\tau_r = ([-1]_{\pt_r}, [0]_{\pt_{r+1}})$ of $[\phi(g_i), \phi(g_j)]$ implies that
$$
[-1]_{\pt_r} \xrightarrow{\phi(g_i)}[0]_{\pt_r}= [0]_{\qt_r}\xrightarrow{\phi(g_j)} [1]_{\qt_r} \xrightarrow
{\phi(g_i)^{-1}}([1]_{\qt_r}){\phi(g_i)^{-1}}\xrightarrow
{{\phi(g_j)^{-1}}}[0]_{\qt_{r+1}}.
$$
From the last arrow we have $([1]_{\qt_r}){\phi(g_i)^{-1}}=[1]_{\qt_{r+1}}$, or $([1]_{\qt_{r+1}})\phi(g_i)= [1]_{\qt_r}$ for $0\leq r\leq m-1$. For $r=m$ it is direct to check that $[1]_{\qt_0}\phi(g_i) = [1]_{\qt_m}$. Therefore $\phi(g_i)$ restricted on the set $C_1$ is $\sigma_1$.

Finally we show that $\phi(g_i)$ contains infinitely many copies of $\sigma_1$ in its cycle decomposition. Observe that the conjugation $\phi(\alpha)^{\phi(g_j)^2}$ contains a factor $\beta_2$ given by
\begin{equation}\label{eq:beta_2}
\beta_2= (\tau_0 \tau_1 \cdots \tau_m)^{\phi(g_j)^2}.
\end{equation} The action of ${\phi(g_j)^2}$ on $\supp \beta$ is the translation by $+2$ along partial translations $\qt_0, \qt_1, \cdots, \qt_m$. So $\beta_2$ can be written as a product of transpositions
$$
([1]_{\qt_0}, [2]_{\qt_1}) \cdots ([1]_{\qt_{m-1}}, [2]_{\qt_m})([1]_{\qt_{m}}, [2]_{\qt_0})
$$ Therefore the conjugation $(\sigma_1)^{\beta_2}$ is the $(m+1)$-cycle
$$
\sigma_2:[2]_{\qt_m} \to [2]_{\qt_{m-1}} \to\cdots \to [2]_{\qt_0}\to [2]_{\qt_m}.
$$ Intuitively speaking, taking the conjugation of $\sigma_1$ by $\beta_2$ results in translating the $(m+1)$-cycle $\sigma_1$ globally by $+1$ along the partial translations $\qt_0, \cdots, \qt_m$. Now the identity (\ref{eq:r_7}) says that $\phi(g_i)$ commutes with $\phi(\alpha)^{\phi(g_j)^2}$. In particular $\phi(g_i)^{\beta_2} = \phi(g_i)$. This means that $\phi(g_i)$ already contains $\sigma_2$, a translated copy of $\sigma_1$, in its cycle decomposition. Applying identity (\ref{eq:r_7}) repeatedly to $\phi(g_i)$ and $\phi(\alpha)^{\phi(g_j)^k}$, we see that $\phi(g_i)^{\beta_k} = \phi(g_i)$ for $k\geq 2$ where $\beta_k$ is defined in an analogous way as in (\ref{eq:beta_2}) with the power $k$ instead of $2$. Thus $\phi(g_i)$ contains infinitely many copies of $\sigma_2$ in its cycle decomposition, which is absurd. In all, a partial translation $\pt$ which intersects $\tau \in T_i$ of type III forces that $\phi(g_i) \notin \calH_n$. Claim III is verified and so we are done.

If $n=2$, we can show the same result for a partial translation of $\phi(g_2)$ as a special case of the above discussion. The braid relation and commutation relation of (\ref{eq:presentation H_2}) can be used for Claim $1$ and Claim $2$ respectively. One can verify Claim $3$ immediately with braid relation again.
\end{proof}

\begin{Cor}\label{coro:summary} Let $3\leq n$. Suppose that a partial translation $\pt$ of $\phi(g_i)$ intersects a partial translation $\qt$ of $\phi(g_j)$. Then $\pt$ does not intersect any other cycles of $\phi(g_j)$ but $\qt$.
\end{Cor}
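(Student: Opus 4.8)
The plan is to leverage the structure established in Lemmas \ref{lemma:partial_translation} and \ref{lemma:common transp on pt}, together with the fact that $\phi(g_j)$ must commute with the various conjugates $\phi(\alpha)^{\phi(g_j)^k}$ via identity (\ref{eq:r_8}) and that $\phi(\alpha)^{\phi(g_j)^k}$ is (finitely) supported. Suppose, for contradiction, that $\pt\in\calP_i$ intersects both $\qt\in\calP_j$ and some other cycle $\qt'$ of $\phi(g_j)$ (a priori $\qt'$ could be another partial translation in $\calP_j$, or a finite cycle of the finitary part $F(\phi(g_j))$). First I would dispose of the case where $\qt'$ is a partial translation: this is precisely the uniqueness clause already proved at the end of Step 2 of Lemma \ref{lemma:partial_translation}, where the conjugate $\beta_k=\phi(\alpha)^{\phi(g_i)^{k-k_0}}$ was shown to fail to commute with $\qt'$, contradicting (\ref{eq:r_6}). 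So the genuinely new case is $\qt'$ a finite cycle of $\phi(g_j)$.

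For that case, the key observation is that $\pt$ and $\qt$ agree on $\{[k]_\pt:k\le k_0\}$ and their supports coincide exactly on $\{[k]_\pt:k\le k_0+1\}$ by (\ref{eq:diverge}) and (\ref{eq:common_support}); using the default labeling (\ref{eq:convention}) we have $k_0=-1$, so $\supp\pt\cap\supp\qt=\{[k]_\pt:k\le 0\}$. If $\pt$ intersects the finite cycle $\qt'$ at a point $[k]_\pt$, then since $[k]_\pt\notin\supp\qt$ we must have $k\ge 1$. I would then run the same induction-producing-infinitely-many-transpositions argument used in the proof of Lemma \ref{lemma:partial_translation} (the cases ``$\tau_k$ exchanges two consecutive points of $\qt$,'' ``$\tau_k$ exchanges non-consecutive points,'' and ``$\tau_k$ exchanges a point of $\qt$ with a point outside $\supp\qt$''), but now applied with $\qt'$ in place of $\qt$: the conjugate $\beta_k=\phi(\alpha)^{\phi(g_i)^{k+1}}$ contains the transposition $\tau^{\pt^{k+1}}$ moving $[k]_\pt$, and $[\beta_k,\phi(g_j)]=1$ by (\ref{eq:r_6}) forces $\beta_k$ to contain infinitely many transpositions, contradicting $\phi(\alpha)\in\FSym_n$. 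The point is that the earlier argument never really used that $\qt$ was an \emph{infinite} cycle beyond the point of intersection — it only used that $\phi(g_j)$ commutes with $\beta_k$ and has an invariant subset (the orbit $\supp\qt'$) disjoint from $\supp\pt$ past the diverging point; a finite orbit works just as well.

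The main obstacle I anticipate is bookkeeping the finite-cycle case cleanly: when $\qt'$ is finite, the ``induction'' producing new transpositions of $\beta_k$ must eventually wrap around the finite orbit, so one has to argue that the images $([k+\text{stuff}]_{\qt'})\beta_k$ stay inside $\supp\qt'$ (because $\supp\qt'$ is $\phi(g_j)$-invariant) yet force ever-new transpositions into $\beta_k$ — the contradiction is still that $\supp\beta_k$ would be infinite, which is impossible, but one should phrase it as: $\beta_k$ would have to act nontrivially on arbitrarily many $\phi(g_i)$-translates of $\supp\qt'$, and these translates are pairwise disjoint and infinite in number since $[0]_\pt\in S_\alpha$ propagates under $\phi(g_i)^{k+1}$. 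Once this is set up, the rest is a verbatim repetition of the estimates in Lemma \ref{lemma:partial_translation}. For $n=2$ the same reasoning goes through using the commutation relations of (\ref{eq:presentation H_2}).
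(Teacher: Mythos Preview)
Your route is different from the paper's, and the finite-cycle branch has a real gap.

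The paper does \emph{not} rerun the commutativity machinery of Lemma~\ref{lemma:partial_translation}. Instead it uses Lemma~\ref{lemma:common transp on pt} as a black box. First, a direct commutator calculation (using $([0]_\pt)\phi(\alpha)=[-1]_\pt$ together with $([-1]_\pt)\qt=[0]_\pt$) shows that $[1]_\pt$ is fixed by $\phi(g_j)$. Then, taking $f\ne\qt$ to be the first other cycle of $\phi(g_j)$ that $\pt$ meets --- so $[k]_\pt$ is fixed by $\phi(g_j)$ for $1\le k\le k_1-1$ and $[k_1]_\pt\in\supp f$ --- one computes
\[
([k_1-1]_\pt)\phi(g_i)\phi(g_j)=([k_1]_\pt)f\neq [k_1]_\pt=([k_1-1]_\pt)\phi(g_j)\phi(g_i),
\]
so $[k_1-1]_\pt\in\supp\phi(\alpha)$ with $k_1-1\ge 1$. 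That produces a second transposition of $\phi(\alpha)$ touching $\pt$, contradicting Lemma~\ref{lemma:common transp on pt}. No case distinction between finite and infinite cycles is needed; the whole proof is a handful of lines.

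Your proposal handles the partial-translation case correctly (that is exactly the uniqueness clause in Step~2 of Lemma~\ref{lemma:partial_translation}), but the finite-cycle argument does not go through as written. The ``infinitely many transpositions'' mechanism in Lemma~\ref{lemma:partial_translation} walks along the $\phi(g_j)$-orbit to manufacture new transpositions of $\beta_k$; when that orbit is the finite cycle $\qt'$, the walk wraps around after $|\supp\qt'|$ steps and yields only finitely many transpositions --- no contradiction. Your proposed fix, that $\beta_k$ ``would have to act nontrivially on arbitrarily many $\phi(g_i)$-translates of $\supp\qt'$'', conflates two things: a fixed $\beta_k$ has the same finite support size as $\phi(\alpha)$, and $\phi(g_i)$-translates of $\supp\qt'$ are not $\phi(g_j)$-invariant, so the relation $[\beta_k,\phi(g_j)]=1$ tells you nothing about them. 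The missing idea is precisely the one the paper uses: Lemma~\ref{lemma:common transp on pt} already forbids a second point of $\supp\phi(\alpha)$ on $\supp\pt$, and a second cycle touching $\pt$ would produce exactly that via the commutator $\phi(\alpha)=[\phi(g_i),\phi(g_j)]$.
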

\begin{proof}
We first check that $[1]_\pt$ is fixed under $\phi(g_j)$. Lemma \ref{lemma:common transp on pt} states that the unique transposition $\tau_\pt$, which intersects $\pt$, exchanges $[-1]_\pt$ and $[0]_\pt$. From the identity $ ([0]_\pt)[\phi(g_i), \phi(g_j)] =([0]_\pt)\tau_\pt = [-1]_\pt$ we have $([0]_\pt)\phi(g_i) \phi(g_j) = ([-1]_\pt)\phi(g_j)\phi(g_i)$, and so
\begin{align*}
([1]_{\pt})\phi(g_j)& = ([0]_\pt)\pt \phi(g_j)= ([0]_\pt)\phi(g_i)\phi(g_j) = ([-1]_\pt)\phi(g_j)\phi(g_i) \\
&= ([-1]_\pt)\qt \phi(g_i) = ([0]_\pt)\phi(g_i) = ([0]_\pt) \pt = [1]_\pt
\end{align*}since $([-1]_\pt)\qt = ([-1]_\pt)\pt=[0]_\pt$ by (\ref{eq:diverge}). Suppose $\pt$ intersects a component $f \neq \qt$ in the cycle decomposition of $\phi(g_j)$. We may further assume that if $k_1$ is the smallest integer in $\{k\in \bbz: [k]_\pt \in \supp f\}$ then $[k]_\pt$ is fixed under $\phi(g_j)$ for all $1\leq k\leq k_1-1$. To show $ [k_1-1]_\pt \in \supp \phi(\alpha)$ we check that
$$
[k_1-1]_\pt \xrightarrow{\phi(g_i)} ([k_1-1]_\pt)\pt = [k_1]_\pt \xrightarrow{\phi(g_j)} ([k_1]_\pt)\phi(g_j) = ([k_1]_\pt)f
$$which is distinct from
$$
[k_1-1]_\pt \xrightarrow{\phi(g_j)} ([k_1-1]_\pt)\phi(g_j) = [k_1-1]_\pt \xrightarrow{\phi(g_i)} ([k_1-1]_\pt)\pt = [k_1]_\pt.
$$Since $k_1-1 > 0$, $\phi(\alpha)$ contains a transposition $\beta \neq \tau_\pt$ which intersects $\pt$. However this contradicts Lemma \ref{lemma:common transp on pt}.
\end{proof}

The unique positive integer $\ell$ in Lemma \ref{lemma:partial_translation} is called \emph{the eventual length} of a monomorphism $\phi$, and denoted by $\ell(\phi)$. Lemma \ref{lemma:partial_translation} implies that $\calP_i$ contains precisely $\ell=\ell(\phi)$ partial translations $\pt_1, \cdots, \pt_\ell$ for each $2\leq i\le n$. Since they are only infinite cycles of $\phi(g_i)$ we have $\Es \phi(g_i) = \cup_{j=1}^\ell \supp \pt_j$.
Lemma \ref{lemma:partial_translation} also implies that each ray of $X_n$ is either a source or a target of $\phi(g_i)$ for some $i$. So the set $\cup_{i=2}^n \Es \phi(g_i)$ contains all points of $X_n$ but finitely many. Let $\ES\subset X_n$ be the set
$$
\ES= \bigcup_{g\in \calH_n} \Es \phi(g).
$$

\begin{Prop}\label{prop:decomposition}
With the notation defined above, $\ES= \cup_{i=2}^n \Es \phi(g_i)$.
\end{Prop}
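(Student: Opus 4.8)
The plan is to show the nontrivial inclusion $\ES \subset \cup_{i=2}^n \Es\phi(g_i)$, since the reverse inclusion is immediate from the definition of $\ES$. Fix an arbitrary $g \in \calH_n$ and a point $P \in \Es\phi(g)$; I must locate some $i$ with $P \in \Es\phi(g_i)$. The key structural input is Lemma~\ref{lemma:partial_translation}: it says that $\cup_{i=2}^n \Es\phi(g_i)$ consists of all of $X_n$ except finitely many points (each ray of $X_n$ is a source or target of some $\phi(g_i)$, and $\phi(g_i)$ acts as a genuine translation of length $\ell$ on each such ray outside a finite set). So the complement $Y := X_n \setminus \cup_{i=2}^n \Es\phi(g_i)$ is a \emph{finite} set, and the whole problem reduces to showing that an essential point $P$ of $\phi(g)$ cannot lie in this finite set $Y$.

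First I would argue that $\cup_{i=2}^n \Es\phi(g_i)$ is $\phi(\calH_n)$-invariant: each $\Es\phi(g_i)$ is a union of supports of partial translations $\pt$ of $\phi(g_i)$, and by the source/target analysis of Lemma~\ref{lemma:partial_translation} together with Corollary~\ref{coro:summary}, the orbits making up these essential sets are permuted among themselves by the generators $\phi(g_2),\dots,\phi(g_n)$ (a partial translation of $\phi(g_i)$ running from $R_l$ to $R_{\gamma(i)}$ gets carried by another generator to another such orbit, up to finitely many points — and being a union of \emph{all but finitely many} points of a union of rays, the set is literally invariant, not just invariant up to finite error, once one checks the finite discrepancies cancel). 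Consequently its complement $Y$ is also $\phi(\calH_n)$-invariant, hence $\phi(g)$ restricts to a permutation of the finite set $Y$. A permutation of a finite set has finite order, so every point of $Y$ lies on a finite $\phi(g)$-orbit; that is, no point of $Y$ is an essential point of $\phi(g)$. Therefore $\Es\phi(g) \subset X_n \setminus Y = \cup_{i=2}^n \Es\phi(g_i)$, and taking the union over all $g \in \calH_n$ gives $\ES \subset \cup_{i=2}^n \Es\phi(g_i)$.

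The main obstacle is making the invariance claim precise: \emph{a priori} $\phi(g_j)$ need only act as a translation on a ray outside a \emph{finite} set, so one must verify that the finitely many "exceptional" points near the origin do not leak out of $\cup_{i=2}^n \Es\phi(g_i)$ under the action, i.e. that the finite discrepancies at the sources and targets genuinely match up. Here I would lean on Lemma~\ref{lemma:common transp on pt} and Corollary~\ref{coro:summary}: they pin down exactly which points of each partial translation are moved by $\phi(\alpha)$ (only the diverging points $[-1]_\pt,[0]_\pt$) and show partial translations of different generators interact only through their unique shared initial segment. This rigidity forces the set $\cup_{i=2}^n \Es\phi(g_i)$ — equivalently its finite complement $Y$ — to be exactly invariant under each $\phi(g_j)$, not merely coarsely invariant. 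Once that bookkeeping is done, the finiteness-of-$Y$ argument above closes the proof.
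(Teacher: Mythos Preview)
Your proposal is correct and follows essentially the same route as the paper: both arguments show that the finite complement $F = X_n \setminus \bigcup_{i=2}^n \Es\phi(g_i)$ is $\phi(\calH_n)$-invariant (hence every point of $F$ has finite orbit under every $\phi(g)$), with the invariance established via Corollary~\ref{coro:summary}. The paper carries this out by an explicit induction on word length, checking that a finite cycle of $\phi(g_i)$ can only meet finite cycles of $\phi(g_j)$; your phrasing that the partial-translation orbits are ``permuted among themselves'' is not quite the mechanism, but the tools you name and the overall structure are exactly what the paper uses.
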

\begin{proof}
One side inclusion is clear. For the reverse inclusion we claim that
$F = X_n\setminus\cup_{i=2}^n \Es \phi(g_i)$ is invariant under $\phi(g)$ for all $g\in \calH_n$. Note that $P\in F$ if and only if $P$ has a finite orbit under $\phi(g_i)$ for all $2\leq i\leq n$. In other words each $\phi(g_i)$ contains a finite cycle which moves $P$. We argue by induction on the word length of $g$. If a finite cycle $f$ of $\phi(g_i)$ moves $P$ then obviously $(P)\phi(g_i) = (P)f$ has the same finite orbit under $\phi(g_i)$. If $(P)\phi(g_i)$ is fixed under $\phi(g_j)$ for some $j$ then we have nothing to prove because the orbit under $\phi(g_j)$ is trivial. To show $(P)\phi(g_i) \in F$ for every $i$, we need to check that if $\phi(g_i)$ has a finite cycle $f$ such that $(P)f \in \supp \phi(g_j)$ for some $j \neq i$, then $\phi(g_j)$ also has a finite cycle which moves $(P)\phi(g_i)$. This follows from the last assertion of Corollary \ref{coro:summary} which implies that if $f$ intersects $\phi(g_j)$ then it can only intersect finite cycles of $\phi(g_j)$ for all $j\neq i$. The same argument shows that if $(P)\phi(g_i)^{-1}$ belongs to $\supp \phi(g_j)$ then it has a finite orbit under $\phi(g_j)$ for every $j\neq i$. So $(P)\phi(g_i)^{\pm1} \in F$. This establishes the base case.

Suppose $g\in \calH_n$ with $|g|=k+1$. Say the last letter of $g$ is $g_i^{\pm1}$, i.e., $g= g'\cdot g_i^{\pm1}$ with $|g'|=k$. By induction assumption $(P)\phi(g') \in F$. So, for each $i$, if $(P)\phi(g') \in \supp \phi(g_i)$ then there exists a finite cycle of $\phi(g_i)$ which moves the point $(P)\phi(g')$. A similar argument applied to $(P) \phi(g')\phi(g_i)^{\pm1}$, instead of $(P)\phi(g_i)^{\pm1}$ as in the base case, shows that the point $(P)\phi(g')\phi(g_i)^{\pm1}$ has a finite orbit under $\phi(g_j)$ for all $2\leq j\leq n$. Therefore $(P)\phi(g)\phi(g_i)^{\pm1} \in F$, and hence $F$ is an invariant set under $\phi(g)$ for all $g\in \calH_n$. Since $F$ is finite $P \in F$ must have a finite orbit under $\phi(g)$. This means that $P \notin \ES$.
\end{proof}

Consider the decomposition of $X_n =\ES \sqcup F$ where $F$ is the finite set as in the above proof. Let $\E(\phi(g))$ denote the restriction of $\phi(g)$ on the set $\ES$. From the decomposition we have $\E(\phi(g)) \E(\phi(h)) = \E(\phi(gh))$ for all $g,h\in \calH_n$. One crucial observation is that, for each $2\leq i\leq n$, $\E(\phi(g_i)) $ is nothing but the product of its commuting partial translations;
\begin{equation}\label{eq:prod partial translations}
\E(\phi(g_i)) =\prod_{\pt \in \calP_i} \pt.
\end{equation} Consequently we have
\begin{equation}\label{eq:essup partial translations}
\Es (\phi(g_i)) = \bigcup_{\pt\in \calP_i} \supp \pt.
\end{equation}
So Lemma \ref{lemma:common transp on pt} implies that $T_i$ consists of $\ell$ transpositions each of which exchanges $[-1]_{\pt}$ and $[0]_{\pt}$ for some $\pt \in \calP_i$. Two conditions (\ref{eq:diverge}) and (\ref{eq:common_support}) show that $T_i = T_j$ for each pair $2\le i\neq j\le n$. It follows from that $\E(\phi(\alpha))$ is the product of $\ell$ transpositions in $T_i$ for any $i$. Indeed we have the following.

\begin{Cor}\label{cor:number of diverging pt}
Let $D$ be the set of all diverging points of partial translations in $\calP = \cup_{i=2}^n \calP_i$. There exists bijections
$$
\calP_i \leftrightarrow D \leftrightarrow T_i
$$for each $2\leq i \leq n$.
\end{Cor}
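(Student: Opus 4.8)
The plan is to exhibit two explicit bijections $\Phi\colon\calP_i\to T_i$ and $\Psi\colon\calP_i\to D$ and then take $\Phi\circ\Psi^{-1}\colon D\to T_i$ as the second bijection in the statement. Throughout I assume $3\le n$ (so that the relations (\ref{eq:relations}) are available) and I use the $\bbz$--labelings of the partial translations normalized as in (\ref{eq:convention}), under which Lemma~\ref{lemma:common transp on pt} says that the unique transposition $\tau_\pt\in T_i$ meeting $\pt\in\calP_i$ exchanges $[-1]_\pt$ and $[0]_\pt=D_\pt$; recall also $|\calP_i|=|T_i|=\ell$ and $T_i=T_j$ for all $i,j$.

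\textbf{Step 1: $\Phi\colon\pt\mapsto\tau_\pt$ is a bijection $\calP_i\to T_i$.} Well-definedness is Lemma~\ref{lemma:common transp on pt}. Since $\supp\tau_\pt=\{[-1]_\pt,[0]_\pt\}\subseteq\supp\pt$ and distinct partial translations of $\phi(g_i)$ are restrictions of $\phi(g_i)$ to distinct orbits, hence have disjoint supports, $\tau_\pt=\tau_{\pt'}$ forces $\pt=\pt'$, so $\Phi$ is injective. For surjectivity, any $\tau\in T_i$ has $\supp\tau\cap\Es\phi(g_i)\ne\emptyset$, so by (\ref{eq:essup partial translations}) it meets $\supp\pt$ for some $\pt\in\calP_i$, and the uniqueness clause of Lemma~\ref{lemma:common transp on pt} then gives $\tau=\tau_\pt=\Phi(\pt)$. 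This is of course consistent with the count $|\calP_i|=|T_i|=\ell$.

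\textbf{Step 2: $\Psi\colon\pt\mapsto D_\pt$ is a bijection $\calP_i\to D$.} Injectivity is immediate, as $D_\pt=[0]_\pt\in\supp\pt$ and distinct partial translations of $\phi(g_i)$ have disjoint supports. Since $D=\bigcup_j\{D_\qt:\qt\in\calP_j\}$ by definition, surjectivity follows once we show $\{D_\pt:\pt\in\calP_i\}=\{D_\qt:\qt\in\calP_j\}$ for each $j$. Fix $\qt\in\calP_j$ with $j\ne i$. Then $\tau_\qt\in T_j=T_i$, so by Step~1 there is a $\pt\in\calP_i$ with $\tau_\pt=\tau_\qt$; as their common support lies in $\supp\pt\cap\supp\qt$, the partial translations $\pt$ and $\qt$ intersect, and by Step~2 of the proof of Lemma~\ref{lemma:partial_translation} (applied to $\pt$, and again with the roles of $i,j$ interchanged to $\qt$) $\qt$ is then the unique partial translation of $\phi(g_j)$ that $\pt$ meets, and vice versa. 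Hence (\ref{eq:diverge}) and (\ref{eq:common_support}) apply to the pair $(\pt,\qt)$: they share a common source tail, agree on it as maps, and first disagree at a single point, which $\pt$ pushes into its target and $\qt$ pushes into its (distinct) target. That point is $D_\pt$ when read off from $\pt$, and by the manifest symmetry of ``first point of divergence'' it equals $D_\qt$ when read off from $\qt$; concretely, (\ref{eq:diverge}) gives $([-1]_\pt)\qt=([-1]_\pt)\pt=[0]_\pt$ while $([0]_\pt)\qt\notin\supp\pt$ by (\ref{eq:common_support}), and re-reading this configuration in $\qt$'s labeling identifies $[0]_\pt$ with $[0]_\qt=D_\qt$ (and $\tau_\pt=\tau_\qt$ forces $[-1]_\pt=[-1]_\qt$ as well). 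Thus $D_\pt=D_\qt$, and running the argument symmetrically yields the desired equality of sets.

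Putting the two steps together, $D\xrightarrow{\Psi^{-1}}\calP_i\xrightarrow{\Phi}T_i$ is a bijection, given explicitly by $D_\pt\mapsto\tau_\pt$, so $\calP_i\leftrightarrow D\leftrightarrow T_i$ as claimed. The only genuinely delicate point is the identification $D_\pt=D_\qt$ for intersecting partial translations arising from two different generators: it requires unwinding (\ref{eq:diverge}) and (\ref{eq:common_support}) together with the uniqueness assertion established in Step~2 of the proof of Lemma~\ref{lemma:partial_translation}. Everything else is bookkeeping with disjointness of orbit supports and the cardinality count $|\calP_i|=|T_i|=\ell$.
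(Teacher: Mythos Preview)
Your proof is correct and follows essentially the same approach as the paper. The paper organizes the argument as a cycle of three injections $\calP_i\xrightarrow{\iota_1}D\xrightarrow{\iota_2}T_i\xrightarrow{\iota_3}\calP_i$ whose composite is the identity, while you construct the two bijections $\Phi$ and $\Psi$ directly; but the substantive ingredients---disjointness of orbit supports, Lemma~\ref{lemma:common transp on pt} for the link to $T_i$, Step~2 of Lemma~\ref{lemma:partial_translation} together with $T_i=T_j$ for the identification $D_\pt=D_\qt$ across different generators---are identical.
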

\begin{proof}
Fix $i$ and consider the sequence of maps
$$
\calP_i \xrightarrow{\iota_1} D \xrightarrow{\iota_2} T_i \xrightarrow{\iota_3}\calP_i
$$ defined by
$$
\iota_1:\pt \mapsto[0]_\pt , \; \iota_2:[0]_\pt \mapsto \tau_\pt, \; \iota_3:\tau_\pt \mapsto \pt.
$$ We claim that all three maps are injective whose composition in a row yields the identity. Partial translations of $\calP_i$ do not intersect each other, which explains $\iota_1$ is injective. Every diverging point is given by $[0]_\qt$ for some $\qt\in \calP_j$. If $j=i$, we have nothing to show that $\iota_2$ is well-defined due to Lemma~\ref{lemma:common transp on pt}. If not, from Step 2 of Lemma~\ref{lemma:partial_translation} one can find a partial translation $\pt \in \calP_i$ with conditions (\ref{eq:diverge}) and (\ref{eq:common_support}). So we have $[0]_\qt= [0]_\pt$. The assignment $[0]_\pt\mapsto \tau_\pt$ is well-defined and injective due to Lemma~\ref{lemma:common transp on pt}. For $\iota_3$ it suffices to check if $\tau \in T_i$ then $\tau$ intersects a unique partial translation in $\calP_i$. By definition, if $\tau \in T_i$ then $\tau$ intersects $\Es \phi(g_i)$. The union (\ref{eq:essup partial translations}) forces $\tau$ to intersect at least one partial translation of $\calP_i$. By Lemma~\ref{lemma:common transp on pt} again, $\iota_3$ is well-defined and injective. It follows immediately from definitions of three maps that the composition is the identity map. We have established bijections as desired.
\end{proof}


\textbf{Expanding map} $\tilde{\phi}$. Recall that $\calH_n$ acts on the underlying set $X_n=\{1, \cdots, n\}\times \bbn$ transitively. So $X_n$ can be considered as a single orbit of a base point $(1,1)$. Let us to express points $(j,m)\in X_n$ ($1\leq j\leq n$, $m\in \bbn$) as
\begin{equation*}\label{eq:transitive action}
 (j,m)=\begin{cases}
(1,1)g_2^{-(m-1)} &\text{  if  $j=1$},\\
(1,1)g_j^{m} &\text{  if  } 2\leq j\leq n.
\end{cases}
\end{equation*}
In case $(j,m)\in R_1$ we simply choose $g_2^{-1}$ among inverses of $n-1$ generators whose actions coincide on $R_1$ as translations by $+1$. By taking $g_2^{-1}$ as a default translation on $R_1$ we can make the above expression unique. Take subscripts and exponents of the generators to obtain the following coordinate system for $(j,m)\in X_n$
 \begin{equation}\label{eq:coordinate}
 (j,m)\leftrightarrow\begin{cases}
[2,-(m-1)] &\text{  if  $j=1$},\\
[j,m] &\text{  if  }  2\leq j\leq n.
\end{cases}
\end{equation}

Corollary~\ref{cor:number of diverging pt} implies that a monomorphism $\phi$ determines $\ell=\ell(\phi)$ diverging points $D_1, \cdots, D_\ell$. Let $\calQ_l =\{\pt\in \calP:[0]_\pt=D_l\}$ for $1\leq l\leq \ell$. For each $2\leq i\leq n$, $\calP_i$ contains precisely one partial translation $\pt$ such that $[0]_\pt=D_l$. Let $\pt_{l,i}$ denote such a unique partial translation $\pt \in \calP_i$, i.e., $\pt_{l,i} =\iota_1^{-1}(D_l)$ where $\iota_1:\calP_i \to D$ is the bijection defined in proof of Corollary~\ref{cor:number of diverging pt}. We label partial translations of $\calQ_l$ as
$$
\calQ_l=\{\pt_{l,2}, \cdots, \pt_{l,n}\}
$$With this new labeling, $\calP_i =\{\pt_{l,i}: 1\leq l \leq \ell\}$ for each $2\leq i\leq n$, and the set of all partial translations $\calP$ determined by $\phi$ has decompositions $\calP = \sqcup_{i=2}^n \calP_i=\sqcup_{l=1}^\ell \calQ_l$. The last identity follows from the equivalence relation: partial translations $\pt, \qt\in \calP$ belong to $\calQ_l$ for some $l$ if and only if $\pt$ intersects $\qt$. For better notation, let $[m]_{l,i}$ denote the point $[m]_{\pt_{l,i}}$ from now on. Let $\calO(D_l)\subset \ES$ denote the set
$$\calO(D_l)=\bigcup_{2\leq i\leq n} \supp \pt_{l,i}.
$$
Since $D_l =[0]_{l,i}$ for all $2\leq i\leq n$, every point in $\calO(D_l)$ can be written as $[m]_{l,i}=(D_l)\pt_{l,i}^m$ for some $\pt_{l,i}\in \calQ_l$ and $m\in \bbz$. Note that this expression is not unique when $m\leq 0$. Each pair of partial translations of $\calQ_l$ satisfies conditions (\ref{eq:diverge}) and (\ref{eq:common_support}). The first condition implies that the point $[m]_{l,i}$ with $m\leq 0$ and $3\leq i\leq n$ is identified to $[m]_{l,2}$. The second condition implies that $[m]_{l,i} = [m]_{l,i'}$ if and only if $i=i'$ when $m\geq 1$. So every point $P\in\calO(D_l)$ can be expressed uniquely as
 \begin{equation}\label{eq:coordinate_Q_l}
P=\begin{cases}
[m]_{l,2} &  m\leq0,\\
[m]_{l,i} &   2\leq i\leq n, 1\leq m.
\end{cases}
\end{equation}
Comparing (\ref{eq:coordinate}) and (\ref{eq:coordinate_Q_l}) we obtain a canonical bijection $\tp_l : X_n \to \calO(D_l)$, for each $l=1, \cdots, \ell$, defined by
\begin{equation}\label{eq:bijection component}
([i,m])\tp_l = [m]_{l,i}.
\end{equation}The expanding map $\tp:X_n \to (X_n)^\ell$ is defined by
$$
\tp = \tp_1 \times \cdots \times \tp_\ell.
$$ Note that $\tp$ depends on the choice of diverging points. However components of $\tp$ are all distinct. This follows from that $\calO(D_l)$ does not intersect $\calO(D_{l'})$ if $l\neq l'$.

\begin{Rmk}
A monomorphism $\phi$ of $\calH_n$ determines a group $G_l\leq Sym_n$ which consists of restrictions $g \in\phi(\calH_n)$ on the orbit of $D_l$ under $\phi(\calH_n)$, $1\leq l\leq \ell$.
We remark that $G_l\cong\calH_n$ for all $l$. Observe that each $\calO(D_l)$ coincides with the orbit of $D_l$. This is because all partial translations in $\calP\setminus \calQ_l$ fix $D_l$. Indeed $G_l$ is generated by $n-1$ partial translations of $\calQ_l$. The map $\tp_l$ conjugates $\calH_n$ to $G_l$ in the ambient group $\Sym_n$.
\end{Rmk}

\begin{Prop}\label{prop:prod_transposition}
Suppose $\tau=(P,Q)$ is the transposition exchanging $P$ and $Q$. Then $\E(\phi(\tau)) $ coincides with a product of $\ell$ commuting transpositions $\prod_{j=1}^{\ell} \left((P)\tp_j , (Q)\tp_j\right)$.
\end{Prop}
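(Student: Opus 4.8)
The plan is to reduce the general case to the distinguished transposition $\alpha=\big((1,1),(1,2)\big)$, for which the formula is essentially already recorded, and then to transport it by conjugation, using that each component $\tp_j$ of the expanding map intertwines the action of $\calH_n$ on $X_n$ with the action of $\E(\phi(\calH_n))$ on the orbit $\calO(D_j)$. \textbf{Step 1 (the case $\tau=\alpha$).} By the coordinate system (\ref{eq:coordinate}) we have $(1,1)=[2,0]$ and $(1,2)=[2,-1]$, so by (\ref{eq:bijection component}) and the normalization (\ref{eq:convention}),
\begin{equation*}
([2,0])\tp_j=[0]_{j,2}=D_j,\qquad ([2,-1])\tp_j=[-1]_{j,2}.
\end{equation*}
By Lemma~\ref{lemma:common transp on pt} and Corollary~\ref{cor:number of diverging pt} (applied with $i=2$), restricting $\phi(\alpha)$ to $\ES$ leaves exactly the $\ell$ transpositions of $T_2$, each of the form $\big([-1]_{j,2},D_j\big)$; hence $\E(\phi(\alpha))=\prod_{j=1}^{\ell}\big(([2,-1])\tp_j,\,([2,0])\tp_j\big)$, which is the assertion for $\tau=\alpha$.

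\textbf{Step 2 (equivariance of $\tp_j$).} The key identity is
\begin{equation}\label{eq:equivariance_of_tp}
\big((x)\tp_j\big)\,\E(\phi(g))=\big((x)g\big)\tp_j\qquad\text{for all }x\in X_n,\ g\in\calH_n,\ 1\le j\le\ell;
\end{equation}
equivalently, $\tp_j$ conjugates the $\calH_n$-action to the action of $\E(\phi(\calH_n))$ on the invariant orbit $\calO(D_j)$, which is precisely the content of the Remark above on the subgroups $G_l$. Since $\E(\phi(\cdot))$ is a homomorphism, $\E(\phi(g))\E(\phi(h))=\E(\phi(gh))$, and $\calO(D_j)$ is $\E(\phi(\calH_n))$-invariant, it suffices to verify (\ref{eq:equivariance_of_tp}) for $g$ in the generating set $g_2,\dots,g_n,\alpha$ (for $n=2$, for $g_2,\alpha$) and then to pass to all $g$ by the homomorphism property. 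For $g=g_k$ one uses $\E(\phi(g_k))=\prod_{\pt\in\calP_k}\pt$ from (\ref{eq:prod partial translations}), together with the facts that $\pt_{j,k}$ translates by $+1$ along its support while the remaining factors $\pt_{l,k}$ with $l\ne j$ fix $\calO(D_j)$ (their supports lie in $\calO(D_l)$, which is disjoint from $\calO(D_j)$); one then compares with the action of $g_k$ under the coordinates (\ref{eq:coordinate}) and (\ref{eq:coordinate_Q_l}), case by case according as the coordinate of $x$ lies on $R_1$, on $R_k$, or on another ray. For $g=\alpha$, (\ref{eq:equivariance_of_tp}) is immediate from Step~1 and the definition of $\tp_j$.

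\textbf{Step 3 (general $\tau$).} Let $\tau=(P,Q)$ with $P\ne Q$. Since $\FSym_n\le\calH_n$ acts $2$-transitively on $X_n$, choose $g\in\calH_n$ with $(1,1)g=P$ and $(1,2)g=Q$; then $\tau=\alpha^g$, so $\phi(\tau)=\phi(\alpha)^{\phi(g)}$, and since $\E(\phi(\cdot))$ is a homomorphism, $\E(\phi(\tau))=\E(\phi(\alpha))^{\,\E(\phi(g))}$. Conjugating a transposition $(A,B)$ by a permutation $h$ yields $\big((A)h,(B)h\big)$, so with $h=\E(\phi(g))$, using Step~1 and then (\ref{eq:equivariance_of_tp}),
\begin{align*}
\E(\phi(\tau))&=\prod_{j=1}^{\ell}\Big(\big(([2,-1])\tp_j\big)\E(\phi(g)),\ \big(([2,0])\tp_j\big)\E(\phi(g))\Big)\\
&=\prod_{j=1}^{\ell}\Big(\big(([2,-1])g\big)\tp_j,\ \big(([2,0])g\big)\tp_j\Big)=\prod_{j=1}^{\ell}\big((Q)\tp_j,\,(P)\tp_j\big),
\end{align*}
since $([2,-1])g=(1,2)g=Q$ and $([2,0])g=(1,1)g=P$. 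Because the sets $\calO(D_1),\dots,\calO(D_\ell)$ are pairwise disjoint and $(P)\tp_j,(Q)\tp_j\in\calO(D_j)$, these $\ell$ transpositions commute, and $\prod_j\big((Q)\tp_j,(P)\tp_j\big)=\prod_j\big((P)\tp_j,(Q)\tp_j\big)$, which is the claim. The only genuinely nontrivial step is Step~2, namely the equivariance (\ref{eq:equivariance_of_tp}): this is a bookkeeping argument reconciling the coordinate systems (\ref{eq:coordinate}) and (\ref{eq:coordinate_Q_l}) under the diverging-point normalization (\ref{eq:convention}), and it rests on Lemma~\ref{lemma:common transp on pt}, on the conditions (\ref{eq:diverge})--(\ref{eq:common_support}) from Lemma~\ref{lemma:partial_translation}, and on the disjointness of the orbits $\calO(D_l)$. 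Once (\ref{eq:equivariance_of_tp}) is in hand, Steps~1 and~3 are purely formal.
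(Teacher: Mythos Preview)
Your proof is correct and takes a genuinely different route from the paper's. The paper proceeds by explicit case analysis: it splits into four cases according to which rays $P$ and $Q$ lie on, writes down in each case a specific word $h$ with $\tau=\alpha^h$, and then tracks by hand how $\E(\phi(h))$ moves the points $[0]_{l,2}$ and $[-1]_{l,2}$. Your argument replaces this by the single equivariance identity $\big((x)\tp_j\big)\E(\phi(g))=\big((x)g\big)\tp_j$, which you check once on generators and then apply uniformly; this is exactly the content of the Remark that $\tp_l$ conjugates $\calH_n$ to $G_l$, which the paper states but does not exploit in the proof of this Proposition.

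Your approach is cleaner and more conceptual: it explains \emph{why} the formula holds (because $\tp_j$ is an intertwiner) rather than verifying it coordinate by coordinate, and it avoids the four-way case split entirely. The paper's approach has the minor advantage of being self-contained and concrete---the explicit words $h$ are written down---whereas your Step~2, while routine, does require the reader to carry out the bookkeeping you sketch (matching the action of $g_k$ in the coordinates (\ref{eq:coordinate}) against the action of $\pt_{j,k}$ in the coordinates (\ref{eq:coordinate_Q_l}), using the diverging-point normalization and the disjointness of the $\calO(D_l)$). One small note: for $n\ge3$ you do not actually need to check equivariance for $\alpha$ separately, since $\alpha=[g_i,g_j]$ is already a word in the $g_k$; you only need it as a separate generator when $n=2$.
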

\begin{proof}
Assume $n\geq 3$.
We consider the following four cases depending on which rays $P$ and $Q$ lie. \\
\textbf{Case I}. Suppose both $P$ and $Q$ lie on $R_1$. Let $P=[2,m]=(1,-m+1)$ and $Q=[2,m']=(1,-m'+1)$ with $m < m'\leq 0$. From the definition of $\tp$ we have
$$
(P)\tp_l =  ([2, m])\tp_l = [m]_{l,2}, \text {  and  }
(Q)\tp_l =  ([2, m'])\tp_l = [m']_{l,2}
$$for each $l$. Since components of $\tp$ are all distinct, $\prod_{l=1}^{\ell} \Bigl((P)\tp_l , (Q)\tp_l\Bigr)=\prod_{l=1}^{\ell} \Bigl([m]_{l,2} , [m']_{l,2}\Bigr)$ is a product of $\ell$ commuting transpositions.

On the other hand, one can check that $\tau$ can be written as $\tau=\alpha ^h$ where
$$
h=g_2 g_3^{m-m'+1} g_2^{m'-1}.
$$ So $\E(\phi(\tau)) = \E(\phi(\alpha^h)) = \E(\phi(\alpha))^{E(\phi(h))}= \E(\phi(\alpha))^{\phi(h)}$. Recall that $\phi(g_2)$ contains partial translations $\pt_{1,2}, \cdots, \pt_{\ell, 2}$ with $D_l = [0]_{l,2}$ for $l=1, \cdots , \ell$. So the involution $\E(\phi(\alpha)) $ is the product of $\ell$ transpositions $\tau_1, \cdots, \tau_\ell$ where $\tau_l$ exchanges $[0]_{l,2}$ and $[-1]_{l,2}$. We examine the effect of $\E(\phi(h))$ on those points. Since $\phi(g_3)$ fixes $[m]_{l,2}$ for all $m\geq 1$, we have, for each $l$,
$$
 [0]_{l,2} \xrightarrow{\phi(g_2)}([0]_{l,2}) \pt_{l,2} = [1]_{l,2}\xrightarrow{\phi(g_3^{m-m'+1})} [1]_{l,2}\xrightarrow{\phi(g_2^{m'-1})}
([1]_{l,2})\pt_{l,2}^{m'-1}=[m']_{l,2},
$$
$$
[-1]_{l,2} \xrightarrow{\phi(g_2)} [0]_{l,2}\xrightarrow{\phi(g_3^{m-m'+1})} ([0]_{l,2})\qt_{l,3}^{m-m'+1}=[m-m'+1]_{l,2}\xrightarrow{\phi(g_2^{m'-1})}
([m-m'+1]_{l,2})\pt_{l,2}^{m'-1}=[m]_{l,2}
$$ where $\qt_{l,3}$ is the partial translation of $\phi(g_3)$ such that $\qt_{l,3}=\pt_{l,2}$ on $\{[m]_{l,2}:m\leq -1\}$. Therefore $\E(\phi(\tau))$ is the product of $\ell$ transpositions exchanging $[m']_{l,2}$ and $[m]_{l,2}$, $l=1, \cdots, \ell$, as expected.

To complete the proof, one can repeat similar calculation for $\E(\phi(h))$ in the following cases:\\
\textbf{Case II}. $P=[2, m]$ and $Q=[i, m']$, with $m\leq 0$, $m'\geq 1$. Then $\tau= \alpha^h$ where
$$
h = \begin{cases}g_2g_3g_2^{m'-1}g_3^{m-1}&i=2\\
                 g_2 g_i^{m'}g_2^{m-1} &3\geq i\leq n.
    \end{cases}
$$
\textbf{Case III}. $P=[i,m]$ and $Q=[i,m']$ with $1\leq m'< m$. Then $\tau= \alpha^h$ where
$$
h = g_2 g_3^{-(m-m'-1)}g_2^{-1}g_i^m.
$$
\textbf{Case IV}. $P=[i,m]$ and $Q=[j,m']$ with $i\neq j$. Then $\tau= \alpha^h$ where
$$
h = g_i g_j g_i^{m-1} g_j^{m'-1}.
$$

If $n=2$ we can apply similar argument with $\alpha=\bigl((1,1),(1,2)\bigr)$. Suppose $P =[2, m]$ and $Q=[2,m']$ with $1\leq m< m'$. The transposition $\tau=(P,Q)$ can abe written as a conjugation $\tau= \alpha^h$ where
$$
h=(g_2 \alpha)^{m'-m-1} g_2 ^{m+1}.
$$
Since $\phi(\alpha)$ exchanges $[-1]_{l,2}$ and $[0]_{l,2}$, but fixes all other points for each $l$ we have
$$
 [0]_{l,2} \xrightarrow{\phi(g_2\alpha)} [1]_{l,2}\xrightarrow{\phi(g_2\alpha)} ([2]_{l,2})\rightarrow \cdots \rightarrow
[m'-m-1]_{l,2}\xrightarrow{\phi(g_2)^{m+1}} [m']_{l,2},
$$
$$
[-1]_{l,2} \xrightarrow{\phi(g_2\alpha)} [-1]_{l,2}\rightarrow \cdots \rightarrow [-1]_{l,2}\xrightarrow{\phi(g_2^{m+1})}
[m]_{l,2}
$$ Therefore $\E(\phi(\tau))= \prod_{l=1}^{\ell} \Bigl([m]_{l,2} , [m']_{l,2}\Bigr)=\prod_{l=1}^{\ell} \Bigl((P)\tp_l ,(Q)\tp_l\Bigr)$. The other cases can be taken care of by similar calculation because a transposition of $\calH_2$ can be expressed as a conjugation of $\alpha$ by $(g_2\alpha)^{m_1}g_2^{m_2}$ for some $m_1, m_2$.
\end{proof}

\begin{Lemma}\label{lm:bound for ell=1}
Suppose that $\phi(g_i) =g_i f_i$ for all $2\leq i \leq n$ where $f_i \in \FSym_n$. If $\tau$ is a transposition, there exists a constant $A_2$, which do not depend on $k$, such that $|\phi^k(\tau)|\leq A_2k$ for all $k\in \bbn$.
\end{Lemma}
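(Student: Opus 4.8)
The plan is to exploit that the hypothesis $\phi(g_i)=g_if_i$ with $f_i\in\FSym_n$ forces $\ell(\phi)=1$, so by Proposition~\ref{prop:decomposition} we have $\ES=\calO(D_1)$, and by~(\ref{eq:prod partial translations}) the essential part $\E\circ\phi\colon\calH_n\to\Sym(\ES)$ sends $g_i\mapsto\pt_{1,i}$. Thus $\E\circ\phi$ coincides with the homomorphism $g\mapsto g^{\tp}$ obtained by transporting $g$ through the bijection $\tp=\tp_1\colon X_n\to\ES$ of~(\ref{eq:bijection component}), i.e. $g^\tp=\tp^{-1}g\tp$ read on $\ES$ (both are homomorphisms $\calH_n\to\Sym(\ES)$ agreeing on the generators, by the computation $g_i^{\tp}=\pt_{1,i}$; for $n=2$ use $\{g_2,\alpha\}$ together with Proposition~\ref{prop:prod_transposition}). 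I would record two features of $\tp$. First, since $\phi(g_i)$ agrees with $g_i$ outside a finite set, $\tp$ has bounded displacement, say $d\bigl((x)\tp,x\bigr)\le C_0$ for all $x$, and outside a finite set $\tp$ acts on each ray $R_i$ as a translation by a fixed integer $c_i$; hence $\tp^m$ displaces points by at most $C_0m$ and far out acts on $R_i$ as translation by $mc_i$. Second, $F:=X_n\setminus\ES$ is finite and $\phi(\calH_n)$-invariant, so $g\mapsto\phi(g)|_F$ is a homomorphism $\calH_n\to\Sym(F)$; its kernel is a nontrivial normal subgroup of $\calH_n$ (as $\calH_n$ is infinite and $\Sym(F)$ finite), hence contains $\FAlt_n$ by Corollary~\ref{cor:normal subgroup}, so it kills $\FAlt_n$ and carries $\FSym_n$ into a subgroup of $\Sym(F)$ of order $\le2$. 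Since $\FSym_n$ and $\FAlt_n$ are $\phi$-invariant, every $\phi^k(\tau)|_F$ lies in this fixed order-$\le2$ subgroup, so $|\phi^k(\tau)|_F|\le B_1$ for a constant $B_1$ independent of $k$.

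Next I would unroll the iteration. Write $u_k:=\phi^k(\tau)=v_kw_k$ with $v_k:=u_k|_{\ES}\in\FSym(\ES)$ and $w_k:=u_k|_F\in\Sym(F)$. For $k\ge2$, $u_{k-1}\in\phi(\calH_n)$ preserves $\ES$, so $v_k=\E(\phi(u_{k-1}))=u_{k-1}^{\tp}=v_{k-1}^{\tp}w_{k-1}^{\tp}$, while $v_1=\tau^{\tp}$ by Proposition~\ref{prop:prod_transposition}. Iterating,
$$
u_k\;=\;\tau^{\tp^{k}}\cdot\Bigl(\prod_{j=1}^{k-1}w_j^{\tp^{\,k-j}}\Bigr)\cdot w_k .
$$
The factor $\tau^{\tp^{k}}$ is a transposition (or the identity) whose two moved points lie within $C_0k$ of $\supp\tau$; since a transposition of two points at distance $\le R$ from the origin has word length $O(R)$ — realise it as a conjugate $\alpha^h$ with $|h|=O(R)$, exactly as in the case analysis in the proof of Proposition~\ref{prop:prod_transposition} — we get $|\tau^{\tp^k}|\le|\tau|+C_1k$. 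The factor $w_k$ has word length $\le B_1$ by the second feature above. So everything reduces to bounding the middle product.

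This middle product is the crux, and I expect it to be the main obstacle. A priori it is a product of up to $k$ permutations, each supported in the fixed finite set $F$ transported outward by an iterate of $\tp$, so that its support can have both size and depth of order $k$, and a crude "product of $k$ factors of length $O(k)$" estimate only yields $O(k^2)$. The resolution is the telescoping phenomenon already visible in $\calH_n$ in identities such as $\alpha\,\alpha^{g_2^{-1}}\cdots\alpha^{g_2^{-(s-2)}}=\alpha(g_2\alpha)^{s-2}g_2^{-(s-2)}$. Concretely: by the first feature of $\tp$, for all large $m$ the element $w_j^{\tp^m}$ is a fixed-shape finitary permutation rigidly translated outward along the rays by $(mc_1,\dots,mc_n)$, and this outward translation — restricted to the (eventually fixed) set of rays that its support meets, and modulo a bounded transient — is realised by conjugation by a single fixed $\delta\in\calH_n$ (a suitable product of $g_i^{\pm1}$'s, with a compensating shift on a ray not met by the support so that $\delta$ stays in $\calH_n$). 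Hence $w_j^{\tp^{k-j}}=b_j^{\,\delta^{\,k-j}}$ for elements $b_j$ ranging over a fixed finite set (recall $w_j$ takes at most two values for $j\ge1$), and the product telescopes,
$$
\prod_{j=1}^{k-1}b_j^{\,\delta^{\,k-j}}=\delta^{-(k-1)}\,b_1\,\delta\,b_2\,\delta\,b_3\cdots\delta\,b_{k-1}\,\delta ,
$$
whose word length is $\le 2k|\delta|+(k-1)\max_j|b_j|=O(k)$. (If the corrections' support meets several drifting rays with nonzero total drift one processes the rays one at a time; as $n$ is fixed this still costs $O(k)$, and the bounded transient is absorbed into the constant.) Combining the three estimates gives $|\phi^k(\tau)|\le A_2k$ for a suitable $A_2=A_2(\phi,\tau)$, which proves the lemma. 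The genuinely delicate point, and the one I would expect to need the most care, is precisely this last step: verifying that the accumulated finitary corrections — each lying in a fixed finite group but pushed out to depth $O(k)$ by the iterates of $\tp$ — assemble into an element of only linear word length, which is exactly where the finitariness of the $f_i$ (equivalently, that $\tp$ is eventually a ray-translation) is used.
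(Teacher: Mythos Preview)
Your approach is genuinely different from the paper's and in one respect more honest. The paper's proof simply asserts that $\phi^k(\tau)=\bigl((P)\tp^k,(Q)\tp^k\bigr)$ by citing Proposition~\ref{prop:prod_transposition}; but that proposition only computes the \emph{essential part} $\E(\phi(\tau))$, and iterating it tacitly uses $\phi^{k-1}(\tau)=\E(\phi^{k-1}(\tau))$, which fails whenever $F=X_n\setminus\ES$ is non-empty and $\phi(\FSym_n)$ acts non-trivially on $F$. You correctly detect this, decompose $\phi^k(\tau)=v_kw_k$ along $\ES\sqcup F$, and derive the exact formula $v_k=\tau^{\tp^k}\cdot\prod_{j=1}^{k-1}w_j^{\tp^{\,k-j}}$. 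The bounds on $\tau^{\tp^k}$ and on $w_k$ are unproblematic and match the paper's intended estimate.

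The gap in your argument is the telescoping of the middle product. A counting argument shows that the eventual ray-shifts $(c_1,\dots,c_n)$ of $\tp$ satisfy $\sum_ic_i=|F|$, so when $F\ne\emptyset$ there is \emph{no} $\delta\in\calH_n$ realising the full translation; your compensation-on-an-unused-ray trick only works if the eventual support of $\sigma^{\tp^m}$ misses some ray, which can fail once $|F|\ge n$, and ``processing rays one at a time'' does not obviously help when cycles of $\sigma$ straddle several rays. A cleaner repair: write $\sigma=\prod_a\tau_a$ as disjoint transpositions and observe (using injectivity of $\tp$ together with $\tp(F)\subset\ES$ and $F\cap\ES=\emptyset$) that all the $\tau_a^{\tp^m}$ for $a$ and $m\ge1$ have pairwise disjoint supports, hence commute; then telescope each $\prod_m\tau_a^{\tp^m}$ separately. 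Each such factor involves at most two rays, so for $n\ge3$ a compensating ray is always available and your $\delta$-argument goes through. For $n=2$ this still leaves the case where the two endpoints drift along both rays with $c_1+c_2=|F|>0$, and there one needs a direct word-length estimate rather than a single conjugator. In any event, the crude bound $|\prod_jw_j^{\tp^{k-j}}|=O(k^2)$ is immediate, and since Theorem~\ref{thm:GR bound} only needs $|\phi^k(\tau)|$ to be polynomial in $k$ to conclude $\GR(\phi)=1$, the application is safe either way.
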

\begin{proof}
By the observation (\ref{eq:prod partial translations}), each $\phi(g_i)$ is nothing but a partial translation $\pt_{i}$. So $\pt_{i}$ acts as a translation by $ -1$ on $R_1$ and by $1$ on $R_i$ up to a finite set for $2\leq i\leq n$. We first show that $|\phi^k(\tau)|$ for a transposition $\tau =(P,Q)$. By Proposition \ref{prop:prod_transposition} we see that
$$
\phi^k(\tau)= \bigl((P)\tp^k, (Q)\tp^k\bigr)
$$ where $\tp=\tp_1$ is the expanding map $X_n \to \ES$ defined in (\ref{eq:bijection component}). We claim that both $(P)\tp^k$ and $(Q)\tp^k$ belong to $B_{n,r}$ with $r\leq m+ks$ for all $k\in \bbn$  for some constants $m$ and $s$ which are determined by $\phi$. Let $[p]_{i}$ denote the point $[p]_{\pt_i}$. For each $2\leq i$ one can find the smallest integer $0<k_i $ such that
$$
[p]_{i} = (i, m_{i}+p-k_i)
$$ for all $k_i\leq p$ where $[k_i]_i = (i,m_i)$. We take the largest number $k_1\leq 0$ so that
$$
[p]_{2} = (1, m_{1}+k_1-p)
$$for all $p\leq k_1$ where $[k_1]_2 = (1, m_1)$. Note that $k_1$ is well defined since the actions of $\phi(g_2)^{-1}, \cdots, \phi(g_n)^{-1}$ are identical on $\{[p]_2:p\leq 0\}$. Let $B_{n,m}$ denote the ball of $X_n$ with radius $m=\max\{m_1, \cdots, m_n\}$. Since each $\phi(g_i)$ acts as a translation by $\pm1$ on each ray outside $B_{n,m}$, $P\in B_{n,s}$ implies that $(P)\tp \in B_{n,s+1}$. Observe that if $P=(i,p)\notin B_{n,m}$ then $(P)\tp \in B_{n,p+s}$ where $s=\max\{|m_2-k_2|, \cdots, |m_n-k_n|, |m_1+k_1-1|\}$. One can check that if $k_i \leq p$ then, by the coordinate system (\ref{eq:coordinate}),
$$
([i,p])\tp = [p]_i = (i, m_{i}+p-k_i) = [i, p+(m_i-k_i)]
$$for each $2\leq i$. Similarly if $p\leq k_1$ then
$$
([2,p])\tp = [p]_{2} = (1, m_{1}+k_1-p) = [2, p-(m_1+k_1-1)]
$$ by (\ref{eq:coordinate}). Thus our claim is verified, and so the transposition $\phi^k(\tau)= \bigl((P)\tp^k, (Q)\tp^k\bigr)$ has support in $B_{n,r}$ with $r\leq m+ks$ for all $k\in \bbn$. It is not difficult to show that a transposition $\tau_0$ with $\supp \tau_0\subset  B_{n,r}$ has length $<10r$. Observe that $\tau_0$ can be written as a conjugation $\tau_0=\alpha^h$ where $h\in \calH_n$ can be taken so that $2|h| + |\alpha|< 10r$ as in the proof of Proposition~\ref{prop:prod_transposition}. Therefore we have
$$
|\phi^k(\tau)|= |\bigl((P)\tp^k, (Q)\tp^k\bigr)|< 10(m+ks)<10(m+s)k = A_2k
$$ for all $k$.
\end{proof}

From now on we assume that a monomorphism $\phi$ with $2\leq \ell(\phi)$ satisfies that
$\phi(g_i)$ has a source $R_1$ and a target $R_i$ for all $2\leq i\leq n$. (In view of Proposition \ref{cor:key lemma} this assumption is legitimate) We also assume that $P\in \ES$ unless otherwise stated.\\

\textbf{Rooted trees induced by $\tp$.} The iteration of $\tp$ applied to $P$ determines a \emph{labeled rooted $\ell$-ary tree}, which we will denote by $\calT_P$. The vertex set $V_P$ of $\calT_P$ is equipped with \emph{label, level} and \emph{$\ell$-ary sequences}. The root of $\calT_P$ is the unique vertex at level $0$, labeled by $P$, which corresponds to the empty sequence. 
Inductively the label $L:V_P\to \ES$ and corresponding sequence $W:V_P \to \Omega_{\ell}$ are defined such that a vertex $v\in V_P$ at level $k\in \bbn$ is labeled by
$$
L(v)=  (P)\tp_{l_1} \tp_{l_2} \cdots \tp_{l_k}
$$ and corresponds to a sequence
$$
W(v) = l_1 l_2 \cdots l_k
$$ where $\Omega_{\ell}$ consists of $\ell$-ary sequences on $\{1, \cdots, \ell\}$ and $1\leq l_j\leq \ell$ for all $1\leq j\leq k$. Note that $W(v)$ can be realized as the unique edge path from the root to $v$ with edge labels $\{1, \cdots ,\ell\}$.  Consequently we have a bijective correspondence between $V_P^k$, the set of vertices at level $k$, and $\Omega_{\ell, k}$, the set of all $\ell$-ary sequences of length $k$. 
\begin{Example}\label{ex:example_tree}
Consider a monomorphism $\phi$ of $\calH_3$ defined by $\phi(g_i) = (g_i)^2 \cdot f_i$ for $i=2,3$ where $f_2$ and $f_3$ are cycles given by
$$
f_2 : (1,1) \to (1,2) \to (2, 1) \to (1,1), \quad f_3 : (1,1)\to (1,2) \to (2, 1)\to (3,2)\to (1,1)
$$ From direct computation we see that $\phi(\alpha) = \E (\phi(\alpha))$ is the product of two transpositions $\tau_1$ and $\tau_2$ where
$$
\tau_1 :  \bigl((2,1),(1,4)\bigr), \quad \tau_2 :  \bigl((1,2),(1,3)\bigr).
$$Set $D_1 =(2,1)$ and $ D_2 = (1,2)$. Each of $\phi(g_2)$ and $\phi(g_2)$ has $\ell(\phi) =2$ partial translations which are
$$
\pt_{1,2} : \cdots \to (1, 2m+4) \to (1, 2m+2)\to \cdots \to(1,4) \to (2,1) \to\cdots \to (2, 2m-1)\to (2, 2m+1) \to \cdots
$$
$$\pt_{2,2} : \cdots \to (1, 2m+3) \to (1, 2m+1)\to \cdots \to(1,3) \to (1,2)\to(1,1) \to\cdots \to (2, 2m)\to (2, 2m+2) \to \cdots
$$
$$
\pt_{1,3} : \cdots \to (1, 2m+4) \to (1, 2m+2)\to \cdots \to(1,4) \to (2,1) \to\cdots \to (3, 2m)\to (3, 2m+2) \to \cdots \quad \quad
$$
$$\pt_{2,3} : \cdots \to (1, 2m+3) \to (1, 2m+1)\to \cdots \to(1,3) \to (1,2) \to\cdots \to (3, 2m-1)\to (3, 2m+1) \to \cdots
$$ where $m \in \bbn$. With $D_l=[0]_{l,2}= [0]_{l,3}$ for $l=1,2$, let us apply $\tp$ repeatedly to describe rooted trees of points in $\{(1,1), (1,2), (2,1)\}=\supp f_2$.

\noindent\begin{tikzpicture}[every tree node/.style,
   level distance=1.8cm,sibling distance=0cm,
   edge from parent path={(\tikzparentnode) -- (\tikzchildnode)}]
\Tree [.\node {$(1,1)=[2,0]$ };
        \edge node[auto=right] {$\tp_1$};
        [.{$[0]_{1,2} =D_1 = (2,1) = [2,1]$}
         \edge node[left] {$\tp_1\;\:$};
          [.{$[1]_{1,2}\!=(2,3)=[2,3]$}
          [.{$[3]_{1,2}\! = [2,7]$} ]
          [.{$[3]_{2,2}\! = [2,4]$} ] ]
          \edge node[right] {$\;\:\tp_2$};
          [.{$[1]_{2,2}\!=(1,1)=[2,0]$}
            [.{$[2,1]$} ] [.{$[2,-1]$} ]
          ]
        ]
       \edge node[auto=left] {$\tp_2$};
      [.{$[0]_{2,2}=D_2 = (1,2) = [2,-1]$}
         \edge node[left] {$\tp_1\;\:$};
         [.{$[-1]_{1,2}\!=(1,4)=[2,-3]$}
         [.{$[-3]_{1,2} \!= [2,-7]$} ]
         [.{$[-3]_{2,2}\! = [2,-6]$} ]]
             \edge node[right] {$\;\:\tp_2$};
         [.{$[-1]_{2,2}\!=(1,3)=[2,-2]$}
          [.{$[2,-5]$} ] [.{$[2,-4]$} ]  ]
      ]
    ]
\end{tikzpicture}
The above illustrates the tree $\calT_{(1,1)}$ with labels up to level $3$. Each vertex $v$ has $\ell=2$ children; the left child is $(v)\tp_1$ and the right child is $(v)\tp_2$. Accordingly all left edges are labelled by $1$ ($\tp_1$) and all right edges are labeled by 2 ($\tp_2$). Each $v\in V_{(1,1)}^k$ corresponds to a unique sequence $W(v)\in \Omega_{2,k}$. For example the vertex $[2,0]$ at level $2$ corresponds to the sequence $12 \in \Omega_{2,2}$. One important observation is that $\calT_{(1,1)}$ contains infinitely many copies of itself. The point $(1,1)$ appears as a label for all vertex $v$ with $W(v)=12 12 \cdots 12$ (as emphasized in blue color!) because the subtree spanned by $v$ and its descendants is identical to $\calT_{(1,1)}$.

The following figure illustrates the tree $\calT_{(1,2)}$ up to level $3$, which happens to be a subtree of $\calT_{(1,1)}$. Observe that $\calT_{(1,2)}$ never contains a copy of itself. This is because the vertices of the tree are all distinct. Yet another crucial fact is that each pair of children has labels which are \emph{translations} of the labels $(1,4)$ and $(1,3)$ at level $1$. For example, $\calT_{(1,2)}$ has four pairs of labels at level $3$, each of which is a translation of the pair $(1,4)$ and $(1,3)$ by some power of $g_2$. Intuitively this is because $\pt_{1,2}$ and $\pt_{2,2}$ act as a translation by $\pm2$ on all points in their supports but finitely many. More precisely
$$
([2,m])\tp_1=[m]_{1,2} = \begin{cases} [2,2m-1] = (1,-2m+2) & m\leq -1\\
                        [2,2m+1] = (2,2m+1) & 1\leq m\\
            \end{cases},
$$
$$
([2,m])\tp_2=[m]_{2,2} = \begin{cases} [2,2m] = (1,-2m+1) & m\leq -1\\
                        [2,2m-2] = (2,2m-2) & 2\leq m\\
            \end{cases}
$$where the last equalities in each case follow from
$$
[2,m]=\begin{cases} (1,-m+1) & m\leq 0\\
                    (2,m) & 1\leq m
                    \end{cases}
$$by (\ref{eq:coordinate}). See Proposition~\ref{prop:stable points}.

\noindent\begin{tikzpicture}[every tree node/.style,
   level distance=1.8cm,sibling distance=.2cm,
   edge from parent path={(\tikzparentnode) -- (\tikzchildnode)}]
\Tree [.\node  {$D_2 = (1,2) = [2,-1]$};
         \edge node[auto=right] {$\tp_1$};
        [.{$[-1]_{1,2}\!=(1,4)=[2,-3]$}
            [.{$[-3]_{1,2} \!= [2,-7]$}
                [.{$[-7]_{1,2} \!= (1,16)$} ]
                [.{$[-7]_{2,2}\! = (1,15)$} ]
            ]
            [.{$[-3]_{2,2}\! = [2,-6]$}
                [.{$(1,14)$} ]
                [.{$(1,13)$} ]
            ]
        ]
      \edge node[auto=left] {$\tp_2$};
         [.{$[-1]_{2,2}\!=(1,3)=[2,-2]$}
            [.{$[-2]_{1,2} \!= [2,-5]$}
                [.{$(1,12)$} ]
                [.{$(1,11)$} ]
            ]
            [.{$[-2]_{2,2}\! = [2,-4]$}
                [.{$(1,10)$} ]
                [.{$(1,9)$} ]
            ]
        ]
      ]
\end{tikzpicture}

The tree $\calT_{(2,1)}$ below also contains infinitely many copies of itself. The subtrees determined by vertices with sequences $21 21\cdots 21$ are identical to $\calT_{(2,1)}$. On the other hand, the subtree $\calT_{(2,3)}$ illustrates the opposite behavior; it does not contain a copy of itself. Moreover any children at level $k\geq 2$ in $\calT_{(2,3)}$ has labels which are translations of the labels $(2,7)$ and $(2,4)$.

\noindent\begin{tikzpicture}[every tree node/.style,
   level distance=1.8cm,sibling distance=.1cm,
   edge from parent path={(\tikzparentnode) -- (\tikzchildnode)}]
\Tree [.\node {$(2,1) = [2,1]$ };
   \edge node[auto=right] {$\tp_1$};
          [.{$[1]_{1,2}\!=(2,3)=[2,3]$}
             [.{$[3]_{1,2}\! = (2,7)=[2,7]$}
                [.{$[2,15]\!=\!(2,15)$} ] [.{$[2,12]\!=\!(2,12)$} ]]
             [.{$[3]_{2,2}\! =(2,4)= [2,4]$}
                [.{$[2,9]\!=\!(2,9)$} ] [.{$[2,6]\!=\!(2,6)$} ]
             ]
          ]
             \edge node[auto=left] {$\tp_2$};
         [.{$[1]_{2,2}\!=(1,1)=[2,0]$}
              [.{$(2,1) = [2,1]$}
                [.{$(2,3)$} ]
                [.{$(1,1)$} ]
              ]
      [.{$D_2 = (1,2) = [2,-1]$}
         [.{$(1,4)$} ]
         [.{$(1,3)$}  ]
       ]
        ]
    ]
\end{tikzpicture}
\end{Example}

For a vertex $v\in V_P$ with $\omega=W(v)$, let $P_\omega$ denote the label $L(v)$. From Proposition \ref{prop:prod_transposition} we have the following.

\begin{Cor}\label{coro:prod_transposition}
Suppose $\tau=(P,Q)$ is the transposition on two points $P, Q \in \ES$. Then $\E(\phi^k(\tau))$ coincides with a product of $\ell^k$ commuting transpositions $\prod_{\omega\in \Omega_{\ell,k}} \left(P_\omega, Q_\omega\right)$ for each $k\in \bbn$.
\end{Cor}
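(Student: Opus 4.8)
The plan is to argue by induction on $k$, the base case $k=1$ being precisely Proposition~\ref{prop:prod_transposition}: by the construction of the labelling $L$ on $\calT_P$ and $\calT_Q$, the level-$1$ vertex carrying the one-term sequence $j$ is labelled $(P)\tp_j$ (resp. $(Q)\tp_j$), so $\prod_{j=1}^{\ell}\bigl((P)\tp_j,(Q)\tp_j\bigr)=\prod_{\omega\in\Omega_{\ell,1}}(P_\omega,Q_\omega)$. The engine of the induction is that $\E\circ\phi$ is a homomorphism: since $F:=X_n\setminus\ES$ is invariant under $\phi(\calH_n)$ (Proposition~\ref{prop:decomposition} and its proof), restriction to $\ES$ is a homomorphism $\phi(\calH_n)\to\Sym(\ES)$, and precomposing with $\phi$ gives a homomorphism $\Psi:=\E\circ\phi:\calH_n\to\Sym(\ES)$ with $\Psi(\phi^{k}(g))=\E(\phi^{k+1}(g))$ for every $g$ and $k$.

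I will also use the following structural point, which I expect to be the most delicate step: $\phi$ carries any element supported in $\ES$ to an element again supported in $\ES$ (and, dually, any element supported in $F$ to one supported in $F$). Writing such an element as a product of transpositions $(A,B)$ with $A,B\in\ES$ and each such transposition as $\alpha^{h}$ for a word $h$ in $g_2,\dots,g_n$ — exactly as in the proof of Proposition~\ref{prop:prod_transposition} — and using that $\ES$ is $\phi(\calH_n)$-invariant, this reduces to the assertion $\supp\phi(\alpha)\subseteq\ES$, which in turn follows from Corollary~\ref{cor:number of diverging pt} (which exhibits $\E(\phi(\alpha))$ as a product of the $\ell$ transpositions in $T_i$, all supported in $\ES$) together with Corollary~\ref{coro:summary} (controlling how the finite cycles of the $\phi(g_i)$ over $F$ meet, which forces $[\phi(g_i),\phi(g_j)]$ to act trivially on $F$). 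Granting it, $\phi^k(\tau)$ is supported in $\ES$ for every $k$, so by the inductive hypothesis $\phi^k(\tau)$ equals the element $\prod_{\omega\in\Omega_{\ell,k}}(P_\omega,Q_\omega)$ of $\FSym_n$ on the nose.

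The inductive step is then a one-line computation followed by reindexing. Applying $\Psi$ to $\phi^k(\tau)=\prod_{\omega\in\Omega_{\ell,k}}(P_\omega,Q_\omega)$ and invoking the homomorphism property together with Proposition~\ref{prop:prod_transposition} on each transposition $(P_\omega,Q_\omega)$ (whose endpoints lie in $\ES$),
$$
\E(\phi^{k+1}(\tau))=\prod_{\omega\in\Omega_{\ell,k}}\E\bigl(\phi((P_\omega,Q_\omega))\bigr)=\prod_{\omega\in\Omega_{\ell,k}}\prod_{j=1}^{\ell}\bigl((P_\omega)\tp_j,(Q_\omega)\tp_j\bigr).
$$
In $\calT_P$ the child of the level-$k$ vertex labelled $P_\omega$ along the edge labelled $j$ is labelled $(P_\omega)\tp_j$ and carries the sequence $\omega j$ ($\omega$ followed by $j$), so $(P_\omega)\tp_j=P_{\omega j}$, and likewise $(Q_\omega)\tp_j=Q_{\omega j}$; since $(\omega,j)\mapsto\omega j$ is a bijection $\Omega_{\ell,k}\times\{1,\dots,\ell\}\to\Omega_{\ell,k+1}$, the right-hand side equals $\prod_{\omega'\in\Omega_{\ell,k+1}}(P_{\omega'},Q_{\omega'})$, a product of $\ell^{k+1}$ transpositions. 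Finally these transpositions are pairwise disjoint, hence commute: for distinct $\omega',\omega''\in\Omega_{\ell,k+1}$ the four points $P_{\omega'},Q_{\omega'},P_{\omega''},Q_{\omega''}$ are distinct, which follows by a straightforward induction on $k$ from $P\neq Q$, the injectivity of each $\tp_l$, and the pairwise disjointness of the orbits $\calO(D_1),\dots,\calO(D_\ell)$. This completes the induction.
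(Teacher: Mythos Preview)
Your argument follows the paper's exactly: induction on $k$, with Proposition~\ref{prop:prod_transposition} supplying both the base case and the inductive step, the homomorphism $\E\circ\phi$ distributing over the product, and the reindexing $\Omega_{\ell,k}\times\{1,\dots,\ell\}\to\Omega_{\ell,k+1}$. You are in fact more careful than the paper in two places: you make explicit the passage from $\E(\phi^k(\tau))=\prod_\omega(P_\omega,Q_\omega)$ to $\phi^k(\tau)=\prod_\omega(P_\omega,Q_\omega)$ (the paper silently writes $\E(\phi\,\phi^k(\tau))=\E(\phi(\prod_\omega\tau_\omega))$ without comment), and your disjointness argument---that all $2\ell^k$ points $P_\omega,Q_\omega$ are distinct, not merely the $P_\omega$'s among themselves---is sharper than the paper's bare citation of Proposition~\ref{prop:distinct vertices level k}.

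The one genuine gap is in your justification of the structural point $\supp\phi(\alpha)\subseteq\ES$. Corollary~\ref{coro:summary} says that a \emph{partial translation} of $\phi(g_i)$ meets no cycle of $\phi(g_j)$ other than its partner partial translation; via the argument in Proposition~\ref{prop:decomposition} this yields that finite cycles of $\phi(g_i)$ meet only finite cycles of $\phi(g_j)$, i.e.\ that $F$ is $\phi(\calH_n)$-invariant. But it does \emph{not} force the restrictions $\phi(g_i)|_F$ and $\phi(g_j)|_F$ to commute, which is exactly what $\phi(\alpha)|_F=1$ asserts. Put differently, Corollary~\ref{cor:number of diverging pt} and Lemma~\ref{lemma:common transp on pt} account for the $\ell$ transpositions of $\phi(\alpha)$ lying in $T_i$ (all supported in $\ES$), but nothing you cite rules out additional transpositions of $\phi(\alpha)$ supported entirely in $F$; equivalently, nothing cited shows that the $\ell$ of equation~\eqref{eq:cycle_alpha} coincides with the translation length $\ell$ of Lemma~\ref{lemma:partial_translation}. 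The paper's own proof has precisely the same lacuna. One route to close it is to note that $g\mapsto\phi(g)|_F$ is a homomorphism $\calH_n\to\Sym(F)$ with finite image, hence (Corollary~\ref{cor:normal subgroup}) factoring through $\calH_n/\FAlt_n$, and then to argue further with the relations to kill the image of $\alpha$; but this needs to be carried out, not asserted.
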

\begin{proof}
The base case follows directly from Proposition \ref{prop:prod_transposition} since $\Omega_{\ell,1}=\{1, \cdots, \ell\}$. Suppose that $\E(\phi^k(\tau)) = \prod_{\omega\in \Omega_{\ell,k}} \tau_\omega$ is a product of $\ell^k$ commuting transpositions for $k\geq 1$ where $\tau_\omega=(P_\omega, Q_\omega)$. By Proposition \ref{prop:prod_transposition} again, we have
$$
\E\bigr(\phi(\tau_\omega)\bigr) = \prod_{j=1}^{\ell} \left((P_\omega)\tp_j , (Q)\omega)\tp_j\right)
$$ for all $\omega \in \Omega_{\ell, k}$. So $\E\bigr(\phi(\tau_\omega)\bigr)=\prod_{\omega'} \left(P_{\omega'}, Q_{\omega'}\right)$ where $\omega'=\omega l\in \Omega_{\ell, k+1}$ for $l=1, \cdots, \ell$. Therefore
\begin{align*}
\E\bigr(\phi^{k+1}(\tau)\bigr)
&=\E\bigr(\phi \phi^{k}(\tau)\bigr)
=\E\Bigl(\phi\bigl(\prod_{\omega\in \Omega_{\ell,k}} \tau_\omega\bigr)\Bigr)
=\prod_{\omega\in \Omega_{\ell,k}} \Bigl(\E\bigl(\phi(\tau_\omega)\bigr)\Bigr)\\
&= \prod_{\omega\in \Omega_{\ell,k}}\prod_{j=1}^{\ell}\Bigl((P_\omega)\tp_j , (Q_\omega)\tp_j\Bigr)
=\prod_{\omega\in \Omega_{\ell,k}} \prod_{\substack{\omega'=\omega l\\1\leq l\leq \ell}} \left(P_{\omega'}, Q_{\omega'}\right)=\prod_{\omega\in \Omega_{\ell,k+1}} \left(P_\omega, Q_\omega\right).
\end{align*}
The commutativity of transpositions above follows from Proposition~\ref{prop:distinct vertices level k}.
\end{proof}

Recall the notation $V_P^k$ which denotes the set of all vertices of $\calT_P$ at level $k$.
\begin{Prop}\label{prop:distinct vertices level k}
 The label map $L:V_P^k \to \ES$ is injective for each $k\in \bbn$.
\end{Prop}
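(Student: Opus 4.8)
The plan is to induct on the level $k$, using two facts already established: each component $\tp_l\colon X_n\to\calO(D_l)$ of the expanding map is a bijection, hence injective, and the orbits $\calO(D_1),\dots,\calO(D_\ell)$ are pairwise disjoint, since $\calO(D_l)\cap\calO(D_{l'})=\emptyset$ whenever $l\neq l'$. First I would record two preliminary remarks. Because $\calO(D_l)\subset\ES\subset X_n$ for every $l$, the iterated composite $(P)\tp_{l_1}\tp_{l_2}\cdots\tp_{l_k}$ is well defined for any $\ell$-ary sequence $l_1\cdots l_k$; and, reading off the outermost factor, the label of a vertex $v\in V_P^k$ with $W(v)=l_1\cdots l_k$ lies in $\calO(D_{l_k})$.

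The base cases are immediate: $V_P^0$ is the singleton root labelled $P$, and for $k=1$ the labels $(P)\tp_1,\dots,(P)\tp_\ell$ lie in the pairwise disjoint sets $\calO(D_1),\dots,\calO(D_\ell)$, so they are distinct. For the inductive step, I would assume $L$ is injective on $V_P^{k-1}$ and take distinct $v,v'\in V_P^k$; by the bijective correspondence $V_P^k\leftrightarrow\Omega_{\ell,k}$ their sequences $W(v)=l_1\cdots l_k$ and $W(v')=l_1'\cdots l_k'$ differ. If $l_k\neq l_k'$, then $L(v)\in\calO(D_{l_k})$ and $L(v')\in\calO(D_{l_k'})$ lie in disjoint sets, so $L(v)\neq L(v')$. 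If $l_k=l_k'=:l$, then $l_1\cdots l_{k-1}\neq l_1'\cdots l_{k-1}'$, so the parents $u,u'\in V_P^{k-1}$ of $v$ and $v'$ are distinct vertices; by the inductive hypothesis $L(u)\neq L(u')$, and since $L(v)=(L(u))\tp_l$ and $L(v')=(L(u'))\tp_l$ with $\tp_l$ injective, it follows that $L(v)\neq L(v')$. This completes the induction.

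Since everything reduces to the injectivity of each $\tp_l$ and the pairwise disjointness of the orbits $\calO(D_l)$, both of which are already in place, there is essentially no obstacle here; the only point that needs a moment's care is the elementary observation that two distinct level-$k$ sequences sharing their last letter must already differ in their length-$(k-1)$ truncations, so that the inductive hypothesis indeed applies to the parents. I would therefore present the argument exactly in this two-case form, with the disjointness of the $\calO(D_l)$'s doing the work in the "last letters differ" case and the injectivity of $\tp_l$ reducing the "last letters agree" case to the previous level.
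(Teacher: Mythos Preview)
Your proof is correct and follows essentially the same approach as the paper: induction on $k$, using the disjointness of the orbits $\calO(D_l)$ to handle vertices whose sequences end in different letters, and the injectivity (bijectivity) of each $\tp_l$ to reduce the remaining case to the previous level. The paper phrases the inductive step as a proof by contradiction rather than your direct two-case split, but the content is identical.
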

\begin{proof}Induction on $k$. The decomposition $\ES = \sqcup_{l=1}^\ell \calO(D_l)$ shows that if $l\neq l'$, $(X_n)\tp_l= \calO(D_l)$ does not intersect $(X_n)\tp_{l'}=\calO(D_{l'})$. This establishes the base case. Suppose $v\in V_P^{k+1}$. Observe that the last letter of $W(v)$ determines which $\calO(D_l)$ the label $L(v)$ belongs to. More precisely, $L(v)\in \calO(D_l)$ if and only if the last letter of $W(v)$ is $l$. If $L(v)=L(v')=Q \in \calO(D_l)$ for vertices $v$ and $v'$ at level $k+1$, then $W(v)$ and $W(v')$ share the same last letter $l$, that is, $W(v)= \eta l$ and $W(v')=\eta'l$ for some $\eta , \eta'\in \Omega_{\ell, k}$. Since $\tp_l:X_n \to \calO(D_l)$ is a bijection, the two vertices which correspond to $\eta$ and $\eta'$ respectively share the same label $(Q)\tp_l^{-1}$. However this contradicts induction assumption.
\end{proof}

\textbf{Stable points.} In Example~\ref{ex:example_tree} we saw that the trees $\calT_{(2,1)}$ and $\calT_{(1,1)}$ contain copies of themselves. However the tree $\calT_{(1,2)}$ illustrates the opposite behavior. Intuitively this is because each $\tp_l$ applied to $(i,k)$ `doubles' the second coordinate for all points of its support but finitely many. We need to formulate this rigorously.

Let $\ell \geq 2$. We want to define \emph{stable points} for $\phi(g_i)$ with the assumption that $\phi(g_i)$ has a unique target $R_i$ for $i=2, \cdots, n$. Fix $i$. Recall that $\phi(g_i)$ translates points of $R_i$ by $+\ell$ up to a finite set.
In other words, the action of $\phi(g_i)$ on $R_i$ is \emph{eventually stabilized} as a translation by $+\ell$. Being a restriction of $\phi(g_i)$, $\pt_{l,i}$ is also eventually stabilized for all $1\leq l\leq \ell$. From property (\ref{eq:source_to_target}) one can take an integer $0\leq k_{l,i}$ for each $l$ such that if $[k]_{l,i}=(i,m)$ then $[k+1]_{l,i}=(i, m+\ell)$ for all $k_{l,i}\leq k$. More precisely, for each $l$, there exists a smallest integer $k_{l,i} $ such that
\begin{equation}\label{eq:stable point}
[k]_{l, i} = (i, m_{l,i}+\ell(k- k_{l,i}))
\end{equation} for all $k_{l,i}\leq k$ where $[k_{l,i}]_{l,i} = (i,m_{l,i})$. The $i^{th}$ \emph{threshold} is the positive integer $s_{i}$ defined by
\begin{equation}\label{eq:threshold}
s_{i} =\displaystyle{\max_{1\leq l \leq \ell}}\Bigl\{\left\lceil\dfrac{\ell k_{l,i} -m_{l,i}}{\ell-1}\right\rceil, k_{l,i}\Bigr\}
\end{equation} where $\lceil \ast \rceil$ stands for the smallest integer function.
A point $(i,k)$ is called a \emph{stable point} if $s_i< k$ where $2\leq i\leq n$. Recall the coordinate system (\ref{eq:coordinate}); $[i,k] = (i,k)$ for $2\leq i\leq n$ and $k\in \bbn$. One crucial observation is that if $[i,k]$ is a stable point, $([i,k])\tp_l$ is again a stable point for all $l$. A stable point $[i,k]$ satisfies $k_{l,i}  < k$ for all $l$. By (\ref{eq:stable point}) we have
\begin{equation}\label{eq:stable point 2}
([i,k])\tp_l = [k]_{l,i} =  (i, m_{l,i}+\ell(k- k_{l,i})).
\end{equation} for all $k >s_i$. Since
$$
k <m_{l,i}+\ell(k- k_{l,i}) \Leftrightarrow  \dfrac{\ell k_{l,i} -m_{l,i}}{\ell-1} < k,
$$ $s_i <k< m_{l,i}+\ell(k- k_{l,i})$ provided $s_{i} < k$. So $([i,k])\tp_l$ is a stable point if $[i,k]$ is a stable point for $2\leq i\leq n$.

We can define stables points on $R_1$ in a similar manner because all the actions of $\phi(g_i)$'s on $R_1$ are also eventually stabilized as a translation by $-\ell$. However we need to change signs and take reverse inequalities accordingly. It suffices to consider one partial translation, say $\pt_{l,2}$, of $\phi(g_2)$ since actions of $\pt_{l,2}^{-1}$ and $\pt_{l,i}^{-1}$ are identical on $\{[k]_{l,2} : k\leq 0\}$ for $3\leq i\leq n$. There exists a largest integer $k_l \leq 0$ such that
\begin{equation}\label{eq:stable point_2}
[k]_{l, 2} = (1, m_l+\ell(k_l-k))
\end{equation} for all $k\leq k_l$ where $[k_{1,2}]_l = (1,m_l)$. Now points $(1,k)\in R_1 $ are called \emph{stable points} for all $ s_1< k$ where $s_1$ is the positive integer defined by
\begin{equation}\label{eq:threshold 1}
s_1 =\max \Bigl\{\left\lceil\dfrac{\ell- \ell k_{l} -m_{l}}{\ell-1}\right\rceil, -k_{l}+1\Bigr\}
\end{equation}
The above definition implies that the image of $(1,k)$ under $\tp_l$ is a stable point if $(i,k)= [2,-k+1]$ (by the coordinate system(\ref{eq:coordinate})) is a stable point.
Since $s_1 < k \Leftrightarrow -k+1< -s_1+1 < k_l$ we have, by (\ref{eq:stable point_2}),
$$
([2,-k+1])\tp_l = [-k+1]_{l,2} =\bigl(1, m_l+\ell(k_l-(-k+1))\bigr)= \left(1, m_l+\ell(k_l+k-1)\right)
$$ for all stable points on $R_1$. So we have $s_1< k<m_l+\ell(k_l+k-1)$ from
$$
k < m_l+\ell(k_l+k-1) \Leftrightarrow  \dfrac{\ell- \ell k_{l} -m_{l}}{\ell-1} < k.
$$
Let $\calS$ denote the set of all stable points. We summarize the discussion above as
\begin{Rmk}\label{rmk:stable points}
If $P=(i,p)$ is a stable point then $(P)\tp_l = (i,q)$ with $p<q$ for each $1\leq l\leq \ell$, $1\leq i\leq n$. In particular, $\calS$ is invariant under $\tp_l$ for each $l$.
\end{Rmk}

\textbf{Translations on a ray.} Let $v\in V_P$ with $\omega = W(v)$. The \emph{descendants} of $v$ at a depth $m\in \bbn$ consists of vertices $u$ such that $W(u) = \omega \omega_1$ for some $\omega_1 \in \Omega_{\ell, m}$. Let $D_{v, m}$ denote the set of descendants of $v$ with depth $m$, and let $D_v= \cup_m  D_{v,m}$. The \emph{children} of $v$ is the descendants of $v$ at depth $1$. Note that $V_P^{k} = \sqcup_{v\in V_P^{k-1}} D_{v,1}$ for all $k$. In case
an ordered $\ell$-tuple $L(D_{v,1})=\bigl( (i,p_1), \cdots, (i,p_\ell)\bigr)$  we suppress the first coordinate and take a vector $\nu = [p_1, \cdots, p_\ell] \in \bbn^\ell$ to express $L(D_{v,1})$. The \emph{translation} of an $\ell$-tuple $\nu$ by $t\in \bbz$ is the $\ell$-tuple
$$
\nu+t=[p_1+t, \cdots, p_\ell+t].
$$ We require that all points of $\nu$ and its translation $\nu +t$ stay in the same ray $R_i$. Let $B_{n,r}$ denote the ball of $X_n$ with radius $r$,
$$
B_{n,r} = \{(i,p)| p\leq r\}.
$$ Recall that $\phi(g_i)$ acts as a translation on $\{(i,p)| p>s_i\}$ and $\{(1,q)| q>s_1\}$ by $\ell$ and $-\ell$ respectively for all $2\leq i\leq n$ where $s_i$ is the $i^{th}$ threshold defined in (\ref{eq:threshold}) and (\ref{eq:threshold 1}). Consequently $\phi(g_i)$ acts by the same manner on $R_i \setminus B_{n,s}$ and $R_1 \setminus B_{n,s}$ for
$$
s=\max\{s_1, \cdots, s_n\}.
$$

\begin{Prop}\label{prop:stable points}
Suppose $P\in \calS\cap R_i$, $1\leq i\le n$. The label map $L:V_P\to \ES$ is injective and $L(V_P) \subset \calS \cap R_i$. For each $v\in V_P$, $L(D_{v,1})$ is a translation of $L(V_P^1)$. Moreover there exists a constant $A_0 = A_0(P)$ such that $L(V_P^k) \subset B_{n, r}$ with $r=A_0 \ell^k +s$
\end{Prop}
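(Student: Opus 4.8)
The plan is to establish the four assertions in the order (a) $L(V_P)\subseteq\calS\cap R_i$, (b) injectivity of $L$ on all of $V_P$, (c) the translation property of $L(D_{v,1})$, and (d) the exponential radius bound, since (a) is what turns (b)--(d) into bookkeeping. First I would prove (a) by induction on the level of a vertex. The root is labelled $P\in\calS\cap R_i$; Remark~\ref{rmk:stable points} tells us that each $\tp_l$ sends a stable point of $R_i$ to a (strictly larger) stable point of $R_i$, and the label of a vertex at level $k+1$ is $(L(u))\tp_l$ for its parent $u$ and some $l$, so induction gives $L(v)\in\calS\cap R_i$ for every $v\in V_P$. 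The payoff is that the explicit formulas \eqref{eq:stable point 2} and \eqref{eq:stable point_2} then apply to \emph{every} label occurring in $\calT_P$, which is exactly what (c) and (d) require.

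For (b) I would not merely quote Proposition~\ref{prop:distinct vertices level k} (injectivity on each fixed level $V_P^k$) but prove the stronger cross-level statement: if $v\in V_P^{j}$, $v'\in V_P^{k}$ with $j\le k$ and $L(v)=L(v')$, then $v=v'$. The argument is an induction on $j$. For $j=0$ the root carries $P=(i,p_0)$, while by (a) together with the strict increase in Remark~\ref{rmk:stable points} any label at a level $\ge1$ has second coordinate $>p_0$, forcing $k=0$. For $j\ge1$, writing $W(v)=l_1\cdots l_j$ and $W(v')=l'_1\cdots l'_k$, the disjointness of the sets $\calO(D_l)$ together with $L(v)\in\calO(D_{l_j})$ and $L(v')\in\calO(D_{l'_k})$ force $l_j=l'_k=:l$; since $\tp_l\colon X_n\to\calO(D_l)$ is a bijection, cancelling it shows the parents of $v$ and $v'$ carry equal labels, so the inductive hypothesis at level $j-1$ gives $j-1=k-1$ and equal parents, hence $W(v)=W(v')$ and $v=v'$. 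I expect this base case to be the only genuinely delicate point: it is precisely where the hypothesis that $P$ is \emph{stable} enters, and without it labels recur across levels, as the self-similar trees $\calT_{(1,1)}$ and $\calT_{(2,1)}$ in Example~\ref{ex:example_tree} (with non-stable roots) illustrate.

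For (c), write $P=(i,p)$ and $L(v)=(i,q)$, both stable points of $R_i$ by (a). When $2\le i\le n$, formula \eqref{eq:stable point 2} shows the second coordinate of $(P)\tp_l$ is $\ell p+(m_{l,i}-\ell k_{l,i})$ and that of $(L(v))\tp_l$ is $\ell q+(m_{l,i}-\ell k_{l,i})$, so they differ by $\ell(q-p)$ independently of $l$; the $i=1$ case is identical using \eqref{eq:stable point_2} and the coordinate convention \eqref{eq:coordinate}. Thus $L(D_{v,1})=L(V_P^1)+\ell(q-p)$, a genuine translation since all the points involved sit on $R_i$. For (d), the same two formulas yield a constant $c=c(\phi)\ge0$ — namely the positive part of $\max_{l,i}(m_{l,i}-\ell k_{l,i})$ and $\max_l(m_l+\ell k_l-\ell)$ — such that the second coordinate of $(i,p)\tp_l$ is at most $\ell p+c$ for every stable point $(i,p)$ and every $l$. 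Letting $p_0$ be the second coordinate of $P$ and iterating, labels at level $k$ have second coordinate at most $R_k$, where $R_0=p_0$ and $R_{k+1}=\ell R_k+c$, i.e. $R_k=\ell^k p_0+c\frac{\ell^k-1}{\ell-1}\le\ell^k\bigl(p_0+\frac{c}{\ell-1}\bigr)$; with $A_0=A_0(P):=p_0+\frac{c}{\ell-1}$ this gives $L(V_P^k)\subseteq B_{n,A_0\ell^k}\subseteq B_{n,A_0\ell^k+s}$, completing the proof.
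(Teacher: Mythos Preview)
Your proposal is correct and follows essentially the same route as the paper's own proof: both rely on Remark~\ref{rmk:stable points} to confine all labels to $\calS\cap R_i$, both obtain cross-level injectivity by the cancel-the-last-letter argument of Proposition~\ref{prop:distinct vertices level k} combined with the observation that the root label $(i,p)$ cannot recur since second coordinates strictly increase, both read the translation property directly off \eqref{eq:stable point 2} and \eqref{eq:stable point_2}, and both extract the radius bound from the linear recursion with ratio $\ell$. The only cosmetic differences are that you organize the injectivity as an induction on the smaller level $j$ rather than on the global level cap, and that in part (d) you solve the recursion $R_{k+1}=\ell R_k+c$ directly to get $R_k\le A_0\ell^k$, whereas the paper performs an algebraic rewriting (equation \eqref{eq:rewriting1}) to obtain the constant in the specific form $A_0=\max_l\{p-\tfrac{\ell k_{l,i}-m_{l,i}}{\ell-1}\}$ and the recursion $a_{k+1}-s=\ell(a_k-s)$; either bound serves equally well for the later applications.
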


\begin{proof}
First we consider the case when $P=(i,p)$ is a stable point for $2\leq i$.
By Remark~\ref{rmk:stable points}, we have $L(v)=(i,q)$ with $p<q$ for all $v\in V_P$, and so $L(V_P) \subset \calS \cap R_i$. Suppressing $i$ in the first coordinate we can write $L(V_P^1) =[p_1, \cdots, p_\ell]$ where
\begin{equation*}\label{eq:stable_point_q}
(i,p_l)=([i,p])\tp_l= [p]_{l,i}= \bigl(i,m_{l,i} + \ell(p-k_{l,i})\bigr)
\end{equation*} for $l=1, \cdots, \ell$ by (\ref{eq:stable point 2}). The label map $L$ restricted on the root vertex and $V_P^1$ is injective since $p<p_l$ for all $l$ and each $p_l$ belongs to $\calO(D_l)$ which are all disjoint for $l=1, \cdots, \ell$. With this base case assume that the map $L$ is injective on the set of vertices up to level $k$. Since no vertex at level $k+1$ attains the label $(i, p)=P$ it suffices to check whether two vertices $v$ and $v'$ (other than the root vertex) at levels $\leq k+1$ share the same label $L(v)= L(v')$. We can apply similar argument as in Proposition~\ref{prop:distinct vertices level k} to draw a contradiction; $L(v)= L(v')$ implies that the ascendants of $v$ and $v'$ share the same label at levels $\leq k$.

For the second assertion suppose $v\in V_P$ with $L(v)=(i,q)$. Since $(1,q)$ is a stable point the identity (\ref{eq:stable point}) implies that $L(D_{v,1})=[q_1, \cdots, q_\ell]$ where
\begin{equation*}\label{eq:stable_point_q}
(i,q_l)= \Bigl(i,m_{l,i} + \ell(q-k_{l,i})\Bigr)= \Bigl(i,m_{l,i} + \ell(p-k_{l,i})+\ell(q-p)\Bigr)
\end{equation*} for $l=1, \cdots, \ell$. Since $p<q$
\begin{equation}\label{eq:translation by q-p}
L(D_{v,1}) =L(V_P^1)+\ell(q-p)
\end{equation} is the translation of $L(V_P^1)$ by $\ell(q-p)>0$.

Since $L(v)=(i,q)$ for all $v\in V_P^k$, we can consider a sequence of natural numbers $\{a_k\} $ so that $a_k$ denotes the maximum of such $q$'s. We want to show $a_k \leq s \ell^k +s$ for all $k$. From the rewriting
\begin{equation}\label{eq:rewriting1}
m_{l,i}+\ell(p-k_{l,i}) =\ell\Bigl(p- \dfrac{\ell k_{l,i}-m_{l,i}}{\ell-1}\Bigr) +\dfrac{\ell k_{l,i}-m_{l,i}}{\ell-1}
\end{equation} we have $a_1\leq A_0\ell +s$ where
$$
A_0 = \max_{1\leq l \leq \ell}\Bigl\{p- \dfrac{\ell k_{l,i}-m_{l,i}}{\ell-1}\Bigr\}.
$$
(since $P=(i,p)$ is a stable point the above maximum is taken over positive numbers, and so $A_0>0$.)
One can rewrite $a_{k+1}= m_{l,i}+\ell(a_k-k_{l,i})$, which follows from the second step above,  as in (\ref{eq:rewriting1}) with $p$ replaced by $a_k$ to check that
\begin{equation}\label{eq:recursive}
a_{k+1}- s= \ell (a_k -s)
\end{equation} for all $k\in \bbn$. Therefore $L(V_P^k) \subset B_{n,a_k}$ and $a_k \leq A_0\ell^k +s$ for all $k\in \bbn$.

In case $P=(1,p)$ one can apply analogous arguments. Remark~\ref{rmk:stable points} together with `cancelling argument' shows that $L(v)= (1,q)$ is a stable point for all $v\in V_P$. By $(1,q) = [2, -q+1]$ and (\ref{eq:stable point_2}), we have that $((1,q))\tp_l$ becomes
$$
([2,-q+1])\tp_l = [-q+1]_{l,2} = \Bigl(1, m_l+\ell(k_l+q-1)\Bigr) = \Bigl(1, m_l+\ell(k_l+p-1)+\ell(q-p)\Bigr)
$$ for all $l$. Therefore $L(D_{v,1})$ is a translation of $L(V_P^1)$ by $\ell(q-p)>0$. From the rewriting for the root $P=(1,p)=[2,-p+1]$,
\begin{equation*}\label{eq:rewriting}
([2,-p+1])\tp_l =m_{l}+\ell(k_{l}+p-1) =\ell\Bigl(p- \dfrac{\ell -m_{l}-\ell k_{l}}{\ell-1}\Bigr) +\dfrac{\ell -m_{l}-\ell k_{l}}{\ell-1}
\end{equation*} we find $a_1 = A_0\ell+s$ such that $L(V_P^1)\subset B_{n,a_1}$ where
$$
A_0 = \max_{1\leq l \leq \ell}\Bigl\{p- \dfrac{\ell -m_{l}-\ell k_{l}}{\ell-1}\Bigr\}.
$$Finally the identity $a_{k+1} =m_{l}+\ell(k_{l}+a_k-1)$ implies that $L(V_P^k)\subset B_{n,a_k}$ where the sequence $\{a_k\}$ satisfies (\ref{eq:recursive}) for all $k\in \bbn$.
\end{proof}

\begin{Rmk}\label{bound on the ball}
We remark that the radii of balls that contain $L(V_P^k)$ can be bound by a linear term of $\ell^k$ even when $P$ is not a stable point. Intuitively this happens because the images of $P$ under expanding map $\tp^{\,k}$ travel inside the ball $B_{n,s}$ for first finite steps. Only after does $L(V_P^k)$ contain stable points the radii of the balls contain $V_P^k$ follow the growth as described in Proposition~\ref{prop:stable points}. More precisely, a ball $B_{n,a_k}$ that contains $L(V_P^k)$ has radius
$$
a_k\leq \begin{cases} s &k\leq k_0\\
                    A_0 \ell^{k-k_0}+s & k_0+1\leq k\\
        \end{cases}
$$for some constant $A_0$.
In all, every $P\in \ES$ determines a constant $A_0$ such that
 $L(V_P^k)\subset B_{n,r}$ with $r\leq A_0\ell^k +s$.
\end{Rmk}

\textbf{Intervals of $V_P^k$.}
We want to decompose $V_P^k$ into \emph{intervals} which provide a coarser decomposition than $V_P^k=\sqcup_{v\in V_P^{k-1}} D_{v,1}$. Motivating examples come from stable points. Proposition \ref{prop:stable points} states that $L(D_{v,1})$ is a translation of $L(V_P^1)$ for all $v\in V_P^{k-1}$ if $P$ is a stable point. The set $\{L(D_{v,1}): v\in V_P^{k-1}\}$ can be viewed as a single orbit of $V_P^1$ under translation. In this case, we want to take $V_P^k$ as a single interval.

Let $v\in V_P^{k_1}$ with $k_1<k$. We say that $D_{v, k-k_1}\subset V_P^k$ is an \emph{interval} of $V_P^k$ if either $v\in V_P^{k-1}$ or $L(v)$ is a stable point. (The naming `interval' comes from that if $D_{v, k-k_1}$ is an interval, $\Lambda(D_{v, k-k_1})$ forms an interval of $\bbn$ with size $\ell^{k-k_1}$ where $\Lambda: V_P^k \to \Omega_{\ell, k}\to\{1, \cdots, \ell^k\}$ is the unique map such that $\Lambda(v)=j$ if $W(v)$ is the $j^{th}$ sequence in the lexicographic order on $\Omega_{\ell,k}$.) Note that an interval may contain smaller intervals. For example, a descendant $u$ of $v$ determines an interval $D_u \cap V_P^{k}$ if $u\in V_P^{k_2}$ with $k_2 <k$. However we can take maximal intervals so that $V_P^k$ consists of minimum number of intervals. For each $k\in \bbn$, let $N_P(k)$ denote the minimum number of intervals which cover $V_P^k$. We remark that $V_P^k$ can be expressed as a disjoint union of $N_P(k)$ intervals.

In view of Proposition \ref{prop:stable points}, we say a tree $\calT_P$ is \emph{stabilized} if $L:V_P\to \ES$ is injective. Let $\calU \subset \ES$ denote the set of all points $P $ such that $L:V_P\to \ES$ is not injective. Proposition \ref{prop:stable points} implies that the cardinality of $\calU$ is finite since $\calU \subset \ES \setminus \calS$. (example)

\begin{Lemma}\label{lm:bound on interval}
There exists a constant $A_1=A_1(\phi)$ such that
$N_P(k)\leq (k-1)(\ell-1)A_1+1$ for all $P\in \ES$ and $k\in \bbn$.
\end{Lemma}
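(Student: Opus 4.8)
The plan is to control how many ``new'' intervals are created as we pass from level $k-1$ to level $k$ of the tree $\calT_P$, and then sum these contributions. Recall that $V_P^k=\sqcup_{v\in V_P^{k-1}}D_{v,1}$ always, so a crude bound is $N_P(k)\leq \ell\, N_P(k-1)$, but this is far too weak; the point is that almost all of the children blocks $D_{v,1}$ at level $k-1$ glue together into the same maximal interval as their parent's interval does. First I would fix a maximal interval $D_{v,k-1-k_1}$ of $V_P^{k-1}$, where $v\in V_P^{k_1}$ and either $L(v)$ is a stable point or $k_1=k-2$. If $L(v)$ is a stable point, then by Remark~\ref{rmk:stable points} all labels $L(u)$ for $u$ a descendant of $v$ are stable points lying on the same ray $R_i$, and by Proposition~\ref{prop:stable points} every $L(D_{u,1})$ is a translation of $L(V_{L(v)}^1)$; hence $D_{v,k-k_1}$ is again a single interval of $V_P^k$. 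So such intervals do not multiply at all. The only intervals that can split when we go one level deeper are the ``unstable'' ones, i.e. those of the form $D_{v,1}$ with $v\in V_P^{k-2}$ and $L(v)\in\calU$ (the finite set of points whose tree is not stabilized, whose cardinality I will call $|\calU|=:A_1'$, finite by Proposition~\ref{prop:stable points}).

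Next I would make this precise by a recursion. Writing $M_P(k)$ for the number of maximal intervals of $V_P^k$ whose defining vertex $v$ satisfies $L(v)\notin\calS$ (equivalently $v\in V_P^{k-1}$ and $L(v)$ is not a stable point), the discussion above gives
\begin{equation*}
N_P(k)\leq N_P(k-1) + (\ell-1)\,M_P(k-1),
\end{equation*}
because each unstable interval at level $k-1$ can split into at most $\ell$ intervals at level $k$ (an increase of at most $\ell-1$), while every stable interval contributes exactly one interval at level $k$ and every unstable interval contributes at least one. It then remains to bound $M_P(k-1)$. A child $u$ of an unstable $v$ has $L(u)=(L(v))\tp_l$; since $L(v)$ lies in the finite set $F\cup\calU$ (points that are neither stable nor have stabilized trees — finite by Proposition~\ref{prop:stable points} and the remark that $\calU\subset\ES\setminus\calS$) and $\tp_l$ has finitely many components, the number of vertices at any level whose label is not a stable point is bounded by a constant $A_1$ depending only on $\phi$: once a label becomes a stable point it stays stable forever (Remark~\ref{rmk:stable points}), so the non-stable labels at level $k$ form a set of size at most $\#(F\cup\calU)\cdot(\text{something uniformly bounded})$. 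Hence $M_P(k)\leq A_1$ for all $k$, and the recursion yields $N_P(k)\leq N_P(1)+(k-1)(\ell-1)A_1\leq 1+(k-1)(\ell-1)A_1$ after checking $N_P(1)=1$.

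The step I expect to be the main obstacle is the uniform bound $M_P(k)\leq A_1$ — i.e.\ showing that the number of vertices of $\calT_P$ at any given level carrying a non-stable label is bounded by a constant independent of both $P$ and $k$. The subtlety is that a non-stable label at level $k$ need not come from a non-stable label at level $k-1$: a point inside the ball $B_{n,s}$ can be mapped by some $\tp_l$ to another point inside $B_{n,s}$, so non-stable labels can ``persist'' or even proliferate for a while before every branch escapes to the stable region. To handle this I would argue that the set of non-stable points reachable from $P$ under iterates of the components $\tp_1,\dots,\tp_\ell$ is contained in $B_{n,s}\cap\ES$ together with the finite exceptional set $\calU$, which is a fixed finite set $Z$ of size depending only on $\phi$ (here $s=\max\{s_1,\dots,s_n\}$ from the threshold definitions \eqref{eq:threshold} and \eqref{eq:threshold 1}); since each of the finitely many points of $Z$ has at most $\ell$ images, the number of level-$k$ vertices with label in $Z$ is at most $|Z|$ once we note that distinct such vertices at the same level have distinct labels (Proposition~\ref{prop:distinct vertices level k}, which applies at every level to the injectivity of $L$ on $V_P^k$). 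That bounds $M_P(k)$ by $|Z|\cdot(\ell-1)$-ish uniformly, and setting $A_1$ to be this constant completes the argument.
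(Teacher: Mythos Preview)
Your argument is correct, and in fact cleaner than the paper's own proof. Your key observation is the right one: by Proposition~\ref{prop:distinct vertices level k} the label map $L$ is injective on each $V_P^k$, and since the set of non-stable points $\ES\setminus\calS$ is finite (it lies in the ball $B_{n,s}$), the number of vertices at any fixed level whose label is non-stable is at most $|\ES\setminus\calS|$. Combined with your recursion $N_P(k)\le N_P(k-1)+(\ell-1)M_P(k-1)$ and the base case $N_P(1)=1$, this immediately gives the bound with $A_1=|\ES\setminus\calS|$. Your worry in the last paragraph that a non-stable label might arise from a stable one is unfounded---Remark~\ref{rmk:stable points} says stable points map to stable points under every $\tp_l$---but this does not affect the validity of the argument, and in any case the level-wise injectivity disposes of the issue regardless.

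The paper takes a different and more laborious route. It first bounds $N_P(k)$ by a constant for $P\notin\calU$, and then for $P\in\calU$ it proves three structural facts about the trees: (i) subtrees rooted at vertices with equal labels are nested; (ii) $P\in\calU$ iff the label $P$ recurs in $\calT_P$; and (iii) at most \emph{one} child of the root can have its label in $\calU$. From (iii) it runs a recursion of the form $N_P(k)\le N_{Q_j}(k-1)+(\ell-1)A_1$, descending along the unique ``bad'' branch. This yields the same linear-in-$k$ bound. The paper's analysis gives finer information about the recurrence structure of labels in $\calT_P$, but none of that is needed for the lemma as stated; your direct use of level-wise injectivity bypasses Steps~1--3 entirely and is the more economical proof.
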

\begin{proof}
Proposition~\ref{prop:stable points} implies that $N_P(k)=1$ for all $P\in \calS$. Indeed
$N_P(k)$ is bounded by a constant for all $P\notin \calU$. If $L:V_P\to \ES$ is injective, one can find $k_0 \in \bbn$ such that $L(V_P^k)$ consists of stable points for all $k\geq k_0$ since $|\ES \setminus \calS|$ is finite. From the decomposition
$$
V_P^k = \bigsqcup_{u\in V_P^{k_0}} D_{u, k-k_0},
$$ where each component is an interval of $V_P^k$, we have $N_P(k) \leq |V_P^{k_0}|=\ell^{k_0}$ for all $k\geq k_0$.
So $N_P(k) \leq \ell^{k_0}$ for all $k\in \bbn$ since the function $N_P(k)$ is monotone increasing on $k$. Taking maximum of upper bounds $\ell^{k_0}$ over all $P\in \ES\setminus \calS \setminus \calU$, which is a finite set, one obtains a constant $A_1\geq 1$ such that $N_P(k)\leq A_1$ for all $P\in \ES\setminus \calU$. To establish desired bound for points of $\calU$ we need following steps together with induction on $k$.

\textbf{Step 1}. A vertex $v$ of $\calT_P$ and its descendants determine a unique subtree $\calT_v$. Note that a pair of subtrees $\calT_v$ and $\calT_{v'}$ satisfies the following dichotomy: they do not intersect or one contains the other. In this step we show that if $L(v) = L(v')$ then either $\calT_v \subset \calT_{v'}$ or  $\calT_{v'} \subset \calT_{v}$.

Suppose $L(v) = L(v')$ for $v\neq v'$. Then $W(v)=\omega l$ and $W(v')=\omega'l$ for some $l$. As in proof of Proposition \ref{prop:distinct vertices level k}, we can cancel the last same letters of two sequences. We have either $\omega l$ is a subword of $\omega' l$ or vice versa since $v\neq v'$. This precisely means that one subtree contains the other.

\textbf{Step 2}. We claim that $P\in \calU$ if and only if $\calT_P$ contains a vertex $v$ other than the root with $L(v)=P$. We need to show that if the label map $L:\calT_P\to \ES$ is not injective then $V_P$ contains $v$ at level $k\geq 1$ with $L(v)=P$. Suppose $V_P$ contains $v_1$ and $v_2$ with $L(v_1) = L(v_2) =Q$. From Step 1 we know that one of subtrees contains the other. We may further assume that one of two trees, say $\calT_{v_1}$, is maximal, which means that $\calT_{v_1}$ contains all subtrees determined by $v$ with $L(v) =Q$. Consequently $\omega_1 = W(v_1)$ is a subsequence of $\omega_2 = W(v_2)$, i.e., $\omega_2 =\omega_1 \eta$ for some sequence $\eta$. The first case to consider is when $|\omega_1|\leq |\eta|$. Using the same canceling argument as in Step 1, one can show that $\eta=\eta_0 \omega_1$ for some sequence $\eta_0$. The sequence $\omega_1= l_1 \cdots l_k$ determines a composition of injective maps such that $(P)\tp_{l_1} \cdots \tp_{l_k}=Q$. Observe that $\omega_1$ also determines a unique path from the vertex $u$, which corresponds to $\omega_1 \eta_0$, to $v_2$. This path transforms into the same composition of maps such that $(L(u))\tp_{l_1} \cdots \tp_{l_k}=L(v_2)=Q$. So $u$ is a vertex with $L(u)=P$. The next case to consider is when $|\omega_1|>|\eta|$. The cancelling argument implies that $\omega_1$ is the concatenation $\omega_1 = \omega_0 \eta$ for some sequence $\omega_0$. Since the sequence $\eta= l'_1 \cdots l'_{m}$ determines identity map $(Q)\tp_{l'_1} \cdots \tp_{l'_m}=(L(v_1))\tp_{l'_1} \cdots \tp_{l'_m}=L(v_2)=Q$ we have $(L(u))\tp_{l'_1} \cdots \tp_{l'_m}=L(v_1)=Q$ for the vertex $u$ corresponding to $\omega_0$. So $L(u)=Q$. However this contradicts the maximality of the tree $\calT_{v_1}$.

\textbf{Step 3}. Next we show that $V_P^1$ contains only one vertex $v$ with $L(v)\in \calU$. Suppose there exist two vertices $v_1$ and $v_2$ at level $1$ such that $L(v_i) \in \calU$, $i=1,2$. By Step $2$, $\calT_{v_i}$ contains a vertex $u_i$ with $L(u_i)=L(v_i)$, $i=1,2$.
Suppose $L(v_1)= (P)\tp_l$ for some $l$. Since $L(u_1)=L(v_1)$ the parent of $u_1$ has label $P$. So $\calT_{v_1}$ contains a vertex $w_1$ such that $L(w_1)=P$. By the same reason $\calT_{v_2}$ contains a vertex $w_2$ with $L(w_2)=P$. Since two subtrees $\calT_{v_1}$ and $\calT_{v_2}$ do not intersect, $\calT_P$ contains two disjoint copies of itself which are determined by $w_1$ and $w_2$. Obviously this can not occur by Step $1$.

\textbf{Step 4}. We complete the proof by induction on $k$. The base case is obvious since $N_P^1=1$ for any $P\in \ES$. From Step $3$, we know that $V_P^1=\{v_1, \cdots, v_\ell\}$ contains one vertex $v_j$ with $L(v_j)=Q_j \in \calU$. Since all other subtrees $\calT_{v_l}$ are stabilized there exists a constant $A_1$ such that $N_{Q_l}(k-1) \leq A_1$ for all $Q_l= L(v_l)$ and $k\in \bbn$, $i\neq j$. We complete the proof by
$$
N_P(k) \leq \sum_{1\leq l\leq \ell}A_1 {Q_l}(k-1)\leq A_1 {Q_j}(k-1) +(\ell-1) A_1\leq(k-1)(\ell-1)A_1+1.
$$
\end{proof}

\begin{Lemma}\label{lm:bound on transpositions}
Suppose $\tau=(P, Q)$ is the transposition on two points $P, Q\in \ES$.
There exists a polynomial $\rho$ on $k \in \bbn$ whose degree does not depend on $k$ such that $|\phi^k(\tau)|\leq \rho(k)\ell^k$ for all $k\in \bbn$.
\end{Lemma}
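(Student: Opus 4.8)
\textbf{Proof plan for Lemma~\ref{lm:bound on transpositions}.}

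The plan is to combine the structural decomposition from Corollary~\ref{coro:prod_transposition} with the two quantitative bounds just established: the ball bound from Proposition~\ref{prop:stable points} together with Remark~\ref{bound on the ball}, and the interval bound from Lemma~\ref{lm:bound on interval}. First I would write, using Corollary~\ref{coro:prod_transposition}, that $\E(\phi^k(\tau))=\prod_{\omega\in\Omega_{\ell,k}}(P_\omega,Q_\omega)$ is a product of $\ell^k$ commuting transpositions, all supported in the ball $B_{n,r_k}$ with $r_k\le A_0\ell^k+s$; here $A_0$ depends only on $P$, $Q$ and $\phi$ by Remark~\ref{bound on the ball} (take the larger of the two constants coming from $P$ and from $Q$). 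I also need to account for the finitary part: $\phi^k(\tau)=\E(\phi^k(\tau))\cdot f_k$ where $f_k\in\FSym_n$ is supported in $F$, a fixed finite set, so $f_k$ ranges over a finite set of elements and contributes only a bounded additive constant to the word length. Thus it suffices to bound $|\E(\phi^k(\tau))|$.

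The heart of the estimate is to exploit that $\E(\phi^k(\tau))$ is not an arbitrary product of $\ell^k$ transpositions inside $B_{n,r_k}$, but is organized along the tree $\calT_P$ (and $\calT_Q$) into $N_P(k)$ (resp.\ $N_Q(k)$) \emph{intervals} in the sense defined before Lemma~\ref{lm:bound on interval}. On each interval, by Proposition~\ref{prop:stable points}, the labels $L(D_{v,1})$ are honest translations of a fixed $\ell$-tuple $L(V_{L(v)}^1)$ along a single ray; more precisely, over one interval $D_{v,k-k_1}$ the pairs $(P_\omega,Q_\omega)$ are, up to a global shift by multiples of $\ell$, copies of a bounded configuration. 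I would show that a product of transpositions forming one such interval — i.e.\ $\ell^{k-k_1}$ transpositions obtained by iterating $\tp_1,\dots,\tp_\ell$ starting from a stable point — can be written as a word whose length is $O(r_k)$, not $O(\ell^{k-k_1}\cdot r_k)$. The mechanism: an interval of transpositions arising from $\tp$ acting on a stable point is conjugate, by a single element of $\calH_n$ of word length $O(r_k)$, to a $\tp$-image of $\E(\phi(\alpha))$ or a bounded product thereof; essentially one can "undo" the whole interval with one conjugator rather than transposition-by-transposition. This is where the explicit conjugation formulas of Proposition~\ref{prop:prod_transposition} (the elements $h=g_ig_jg_i^{m-1}g_j^{m'-1}$, etc.) get reused: each interval becomes $\phi^{k_1}$ applied to a bounded-length element, and $\phi^{k_1}$ of a fixed generator has length $O(\ell^{k_1})$ by the source/target translation-length-$\ell$ property, times the number of interval "anchors" which is controlled by $r_k$.

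Assembling: $|\E(\phi^k(\tau))|\le \sum_{j=1}^{N}\bigl(\text{length of }j^{\text{th}}\text{ interval word}\bigr)$, with $N\le N_P(k)+N_Q(k)\le 2\bigl((k-1)(\ell-1)A_1+1\bigr)$ linear in $k$ by Lemma~\ref{lm:bound on interval}, and each interval word of length $O(r_k)=O(A_0\ell^k+s)=O(\ell^k)$. Hence $|\phi^k(\tau)|\le C\cdot k\cdot\ell^k + (\text{bounded})\le \rho(k)\ell^k$ for a polynomial $\rho$ of degree $1$ (degree independent of $k$, as required). The main obstacle — and the step deserving the most care — is the interval estimate: proving rigorously that an interval's worth of transpositions can be realized by a word of length linear in the ball radius rather than linear in $\ell^{k-k_1}$. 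One must check that the conjugator witnessing the translation-structure on an interval has length controlled by the radius $r_k$ of the enclosing ball (so that translating a configuration by $t\le r_k$ costs $O(r_k)$, using that a translation by $t$ on a ray is $g_i^{t}$ of length $t$), and that the bounded "seed" configuration at the base of each interval has bounded length uniformly in $k$. A secondary subtlety is that non-stable starting points $P\in\calU$ contribute finitely many extra intervals before stabilization (first $k_0$ steps live in $B_{n,s}$), but since $|\calU|$ and $k_0$ are bounded in terms of $\phi$ alone, this only inflates the constants, not the degree of $\rho$.
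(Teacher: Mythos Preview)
Your overall architecture---write $\E(\phi^k(\tau))$ via Corollary~\ref{coro:prod_transposition}, partition the $\ell^k$ transpositions into $O(k)$ intervals by Lemma~\ref{lm:bound on interval}, bound each interval's word length, and sum---is exactly the paper's strategy. The discrepancy is in how the interval estimate is actually carried out.

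Your proposed mechanism for the interval bound is where the proposal thins out. You assert that one interval's worth of transpositions is ``conjugate, by a single element of $\calH_n$ of word length $O(r_k)$, to a $\tp$-image of $\E(\phi(\alpha))$ or a bounded product thereof,'' yielding $O(r_k)=O(\ell^k)$ per interval. This is not clearly true when $P$ and $Q$ lie on \emph{different} rays. In that situation, for an interval of size $\ell^m$ based at stable vertices, the pairs $(P_\omega,Q_\omega)$ fill out a genuinely two-parameter family (one parameter along $R_i$, one along $R_j$, with different increments $d_l,d'_l$); they are not a single translate of a bounded configuration, so a single conjugator cannot collapse them. Your fallback line (``each interval becomes $\phi^{k_1}$ applied to a bounded-length element'') is circular: that is precisely the quantity you are trying to bound.

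The paper closes this gap by first proving the \emph{stable-point} case directly and only then feeding it into the interval decomposition. For stable $P\in R_i$, $Q\in R_j$ it distinguishes three cases. When $i\ne j$ (Cases~I and II) it does not find a single conjugator; instead it writes $\E(\phi^{k+1}(\tau))$ as a product of $\ell$ conjugates of $\E(\phi^{k}(\tau))$, each conjugator $\beta_l$ having length $O(\ell^k)$, giving the recursion $a_{k+1}\le \ell\,a_k+C\ell^k$ and hence $a_k=O(k\,\ell^k)$---a linear $\rho$, not a constant one. When $i=j$ (Case~III) the transpositions are all translates of a single one along one ray, and a telescoping conjugation argument gives the $O(\ell^k)$ bound you envisioned. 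Only after these stable-point bounds are in hand does the paper take the common refinement of the interval decompositions of $V_P^k$ and $V_Q^k$, apply the appropriate stable-point bound to each piece, and sum. So your plan is right; the step you correctly flagged as ``deserving the most care'' needs the paper's inductive/case argument rather than a single-conjugator shortcut.
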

\begin{proof}
First we show the existence of such a polynomial when $P$ and $Q$ are stable points, and then we extend the discussion to the general case.
Suppose $P=(i,p)$ is a stable point. Observe that explicit expressions for $L(v)$ can be obtained for $v\in V_P^{k}$ whenever $W(v)$ is given. We want to find such a expressions using $L(V_P^1)$.
For each $1\leq l\leq \ell$, let $V_P^1=\{v_1, \cdots, v_\ell\}$ with $L(v_l)= (i,p_l) = (P)\tp_l$, and let $d_l = p_l-p$. We claim that if $W(v)= l_1 l_2 \cdots l_{k+1} \in \Omega_{\ell, k+1}$ then $L(v) = (i,p_v)$ where
\begin{equation}\label{eq:p_v}
 p_v= d_{l_1} \ell^{k} + d_{l_2} \ell^{k-1} +\cdots + d_{l_{k}} \ell + p_{l_{k+1}}
\end{equation}
for each $k\in \bbn$. The base case for $k=1$ follows immediately from (\ref{eq:translation by q-p}). Assume $u\in V_P^{k+2}$ with $W(u)= l_1 l_2 \cdots l_{k+2}$. The unique parent vertex $v$ of $u$ with $W(v) =l_1 l_2 \cdots l_{k+1}$ has label $L(v) = (i,p_v)$ which satisfies (\ref{eq:p_v}). By the identity (\ref{eq:translation by q-p}), $L(u) = (i,p_u)$ satisfies
$$
p_u = \ell(p_v-p)+p_{l_{k+2}} = \ell\bigl(d_{l_1} \ell^{k} +\cdots + d_{l_{k}} \ell + p_{l_{k+1}}-p\bigr) +p_{l_{k+2}}= d_{l_1} \ell^{k+1} +\cdots + d_{l_{k+1}} \ell + p_{l_{k+2}}
$$since $u$ is the $(l_{k_2})^{th}$ child of $v$.

Next we want to describe the relationship between $L(D_{v_l, k})$ and $L(V_P^k)$ for $1\leq l\leq \ell$ and $k\in \bbn$. A vertex $v_l\in V_P^1$ determines a set of vertices $D_{v_l, k} \subset V_P^{k+1}$ with $|D_{v_l, k}|=\ell^k =|V_P^k|$. Observe that each vertex $u\in D_{v_l, k}$ corresponds to $W(u)= l \omega$ for some $\omega \in \Omega_{\ell, k}$. The concatenation $W(v)\mapsto l W(v)$ induces a bijection between $V_P^{k}$ and $D_{v_l, k}$; $V_P^{k}\ni v \leftrightarrow u \in D_{v_l, k}$ if $W(u)=l W(v)$. We can further show that $ L(D_{v_l, k})$ is a translation of $L(V_P^k)$ by $d_l \ell^k$ for each $l$ and $k$. Using the expression (\ref{eq:p_v}) we compare $L(v)$ and $L(u)$ for a corresponding pair under the above bijection. If $v\in V_P^k$ with $W(v)=l_1 \cdots l_k$, the $L(v) =(i,p_v)$ satisfies
\begin{equation*}
 p_v= d_{l_1} \ell^{k-1} + \cdots + d_{l_{k-1}} \ell + p_{l_{k}}.
\end{equation*} The corresponding vertex $u\in D_{v_l, k}$ with $W(u)= l \,l_1 \cdots l_k$ has label $L(u) = (i,p_u)$ where
$$
p_u= d_l \ell^k + d_{l_1} \ell^{k-1}+\cdots + d_{l_{k-1}} \ell + p_{l_{k}}.
$$The we have the desired difference $p_u-p_v= d_l\ell^k$. Therefore the above bijection between $V_P^{k}$ and $D_{v_l, k}$ transforms into a translation of $L(V_P^k)$ by $d_l \ell^k$ for all $l$ and $k$, i.e., the following diagram commute
$$
\CD
V_P^k@ >>>D_{v_l,k}\\
@VV{L}V@VV{L}V\\
L(V_P^k)\subset R_i@>{+d_l\ell^k}>>L(D_{v_l,k})\subset R_i
\endCD
$$

We are ready to find a polynomial $\rho(k)$ such that $|\phi^k(\tau)|\leq \rho(k)\ell^k$ for all $k$ when $P=(i,p)$ and $Q=(j,q)$ are stable points where $1\leq i,j \leq n$. Note that we want to make sure that the degree of $\rho(k)$ does not depend on $k$. Let $V_Q^1=\{u_1, \cdots, u_\ell\}$ with $L(u_l) = (j, q_l) = (Q) \tp_l$, and let $d_l'=q_l-q$ for $1\leq i\leq \ell$. To deal with the base case $k=1$, we shall take $\rho(k)$ with $p(1)=|\phi(\tau)|$. Corollary \ref{coro:prod_transposition} implies that
$$
\E(\phi^{k+1}(\tau)) = \prod_{\omega\in \Omega_{\ell, k+1}} \left(P_\omega, Q_\omega\right).
$$
To establish desired bounds for $|\phi^{k+1}(\tau)|$ let us regroup those $\ell^{k+1}$ transpositions into $\ell$ subcollections as follows. The sets $V_P^1=\{v_1, \cdots, v_\ell\}$ and $V_Q^1=\{u_1, \cdots, u_\ell\}$ provide canonical decompositions
$$
V_P^{k+1} =\bigsqcup_{1\leq l\leq \ell} D_{v_l, k}\text { and  } V_Q^{k+1} =\bigsqcup_{1\leq l\leq \ell} D_{u_l, k}.
$$
Let $\sigma_l$ be the product
$$
\sigma_l = \prod_{\omega\in W(D_{v_l,k})} \left(P_\omega, Q_\omega\right)=\prod_{\substack{\omega= l \omega'\\ \omega'\in \Omega_{\ell, k}}} \left(P_\omega, Q_\omega\right)
$$
for $1\leq l\leq \ell$. Note that each $\sigma_l$ is the restriction of $\E(\phi^{k+1}(\tau))$ on a $\phi^{k+1} (\tau)$-invariant subset $L(D_{v_l,k}) \cup L(D_{u_l,k})$. So $ \E(\phi^{k+1}(\tau)) = \sigma_1 \cdots \sigma_\ell$.
Since $L(D_{v_l,k})\subset R_i$ and $L(D_{u_l,k})\subset R_{j}$ are translations of $L(V_P^k)$ and $L(V_Q^k)$ by $d_l\ell^k$ and $d'_l\ell^k$ respectively for each $l$, $\sigma_l$ can be expressed as a conjugation of $\E(\phi^k(\tau))$. We need to consider three cases depending on $i$ and $j$. \\
\textbf{Case I.} $2\leq i\neq j \leq n$. Define $\beta_l$ by
$$
\beta_l = g_i^{d_l \ell^k} g_{j}^{d_l' \ell^k}
$$for $1\leq l\leq \ell$. Then $\sigma_l$ is the conjugation of $\phi^k(\tau)$ by $\beta_l$. Setting $d=1/4 \max\{d_1, \cdots, d_\ell, d_1', \cdots, d_\ell'\}$ we can expect
$$
|\sigma_l| = |\E(\phi^k(\tau))|+2|\beta_l| \leq \rho(k)\ell^k+2(d_l+d_l')\ell^k \leq \Bigl(\rho(k)+d \Bigr)\ell^k
$$ for each $l$. Obviously there exists a polynomial $\rho(k)$ of degree $d$ with the condition  $p(1)=|\phi(\tau)|$ such that
\begin{equation}\label{eq:bound by poly}
|\E(\phi^{k+1}(\tau))| \leq \sum_{1\leq \ell } |\sigma_l| = \Bigl(\rho(k)+d \Bigr)\ell^{k+1} <\rho(k+1) \ell^{k+1}.
\end{equation} For example one can take $\rho(k)= k^d+ |\phi(\tau)|$.

\textbf{Case II.} $j=1$, $2\leq i\leq n$. Take $i'\neq i$ to define $\beta_l$ as
$$
\beta_l = (g_{i'}^{-1} g_i)^{d_l \ell^k} g_{i'}^{-d_l' \ell^k}
$$ for $1\leq l\leq \ell$. Since each $\sigma_l$ is the conjugation of $\phi^k(\tau)$ by  $\beta_l$ for each $l$, we have
$$
|\sigma_l| = |\E(\phi^k(\tau))|+2|\beta_l| \leq \rho(k)\ell^k+2(2d_l+d_l')\ell^k \leq \Bigl(\rho(k)+d \Bigr)\ell^k
$$ where $d= 1/6 \max\{d_1, \cdots, d_\ell, d_1', \cdots, d_\ell'\}$. Therefore we can choose a polynomial $\rho(k)$ which satisfies the inequality (\ref{eq:bound by poly}) as well as the condition on $p(1)$.

\textbf{Case III.} $i=j$. In this case it may not be useful to express $\sigma_l$ as a conjugation of $ \E(\phi^k(\tau))$ by $\beta_l$. It seems that $|\beta_l|$ can be larger than we expected as $\beta_l$ translates $L(D_{v_l,k})$ and $L(D_{u_l,k})$ independently by distinct amounts $d_l\ell^k$ and $d'_l \ell^k$ on the same ray. Instead of induction on $k$, we can use a pattern that the transpositions of $\E(\phi^{k}(\tau))$ follow. Observe that if $P=(i,p)$ and $Q=(i,q)$ are stable points, then $L(V_P^k)$ is a translation of $L(V_Q^k)$ on the ray $R_i$ for each $k\in \bbn$. Assume that $q<p$ and $2\leq i$. For $k=1$, it is each to check that
$$
p_l-q_l = \Bigl(m_{l,i} + \ell(p-k_{l,i}\Bigr)-\Bigl(m_{l,i} + \ell(q-k_{l,i}\Bigr)=\ell(p-q)
$$for each $l$. So $L(V_P^1)$ is the translation of $L(V_Q^1)$ by $\ell(p-q)$. Since
$$
d_l -d'_l = (p_l-p)- (q_l-q) = (p_l-q_l) -(p-q) = (\ell-1)(p-q)
$$ for all $l$, we can use the expression (\ref{eq:p_v}) to compare $L(v)=(i,p_v)$ and $L(u)=(i,q_u)$ for each pair of vertices $v\in V_P^k$ and $u\in V_Q^k$ with $W(v)=W(u)$. If $W(v)=l_1 l_2 \cdots l_k$, then we have
\begin{align*}
p_v-q_u &= \Bigl(d_{l_1} \ell^{k-1}  +\cdots + d_{l_{k-1}} \ell + p_{l_{k}}\Bigr) -\Bigl(d\,'_{l_1} \ell^{k-1} + \cdots + d\,'_{l_{k-1}} \ell + q_{l_{k}}\Bigr) \\
&=(d_{l_1}-d\,'_{l_1})\ell^{k-1}+\cdots+(d_{l_{k-1}}-d\,'_{l_{k-1}})\ell + (p_{l_{k}}-q_{l_{k}})\\
&= (\ell-1)(p-q)(\ell^{k-1}+ \cdots+\ell) + \ell(p-q),
\end{align*}which does not depend on the choice of $v$ and $u$. Therefore the whole set $L(V_P^k)$ is a translation of $L(V_Q^k)$ by $y = p_v-q_u$. In particular, all $\ell^k$ transpositions of $\phi^{k}(\tau)$ are translations of one transposition on $ R_i$. So we can write $\phi^{k}(\tau)$ as
\begin{equation}\label{eq:prod of trans d}
\E(\phi^{k}(\tau))= \prod_{1\le m\le \ell^k} (x_m, x_m+y)
\end{equation} where $(x, x')$ stands for the transposition exchanging $(i, x)$ and $(i, x')$. Since all transpositions in (\ref{eq:prod of trans d}) commute with each other we can arrange them so that $x_m< x_{m'}$ for all $m<m'$.
Set $\tau_1=(x_1, x_1+y)$ to express $(x_m, x_m+y)$ as a conjugation of $\tau_1$ by ${g_i}^{x_m-x_1}$ for all $m$. Cancelling subwords $g_i g_i^{-1}$ we obtain
\begin{align*}
|\E(\phi^{k}(\tau)) |&= |\tau_1 \tau_1^{{g_i}^{x_2-x_1}} \cdots \tau_1^{{g_i}^{x_m-x_1}}|\\
&=|\tau_1 {g_i}^{-(x_2-x_1)}\tau_1 {g_i}^{x_2-x_1} {g_i}^{-(x_3-x_1)} \cdots {g_i}^{-(x_m-x_1)}\tau_1 {g_i}^{x_m-x_1}|\\
&\leq \ell|\tau_1| + 2(x_m-x_1)
\end{align*}
To bound $|\tau_1|$ we can use the fact that a transposition $\tau_0$ with $\supp \tau_0 \subset  B_{n,r}$ has length at most $5r$. Proposition~\ref{prop:stable points} implies that $\supp \tau_1 \subset B_{n,r}$ with $r\le A_0\ell^k +s$ for some constant $A_0$ which does not depend on $k$. Thus $|\tau_1|\leq 5( A_0\ell^k +s)$. Proposition~\ref{prop:stable points} also implies that
$x_m+y\leq A_0\ell^k +s$ for all $m$. So
$$
|\E(\phi^{k}(\tau)) |\leq 5\ell( A_0\ell^k +s) + 2 ( A_0\ell^k +s)
$$ for all $k$. Therefore there exists a polynomial $\rho(k)=5A_0\ell + 2 A_0+5s\ell +2s$ such that  $|\E(\phi^{k}(\tau) )|\leq \rho(k)\ell^k$ for all $k\in \bbn$.

In case $P$ and $Q$ are stable points of $R_1$ one can apply similar argument and calculation after expressing transpositions of (\ref{eq:prod of trans d}) as conjugations of $\tau_1$ by negative powers of ${g_i}$. So far we have shown that if $\tau$ exchanges two stable points there exists a polynomial $\rho$ such that
\begin{equation}\label{eq:bound by poly}
 |\E(\phi^{k}(\tau)) |\leq \rho(k)\ell^k
\end{equation}
for all $k$ where the degree of $\rho(k)$ does not depend on $k$.

Suppose $P,Q\in \ES$. For each $k\in \bbn$, $V_P^k$ and $V_Q^k$ can be expressed as disjoint unions of $N_P(k)$ and $N_Q(k)$ intervals respectively. The two collections of intervals determine two partitions on $\{1, \cdots, \ell^k\}$ via the map $A: V_P^k \to \{1, \cdots, \ell^k\}$. The common refinement of two partitions determine a set of common intervals for both $V_P^k$ and $V_Q^k$. Let $\calI_k=\{ I_1, \cdots, I_N\}$ and $\calI'_k=\{ I'_1, \cdots, I'_N\}$ denote the common intervals for $V_P^k$ and $V_Q^k$ respectively, i.e., $W(I_j)=W(I_j')$ for all $j$. Lemma~\ref{lm:bound on interval} guarantees that $N\leq 2A_1 k\ell$ where $A_1$ is a constant determined by $\phi$. Using those common intervals we can write $
\E(\phi(\tau)^k) = \prod_{1\le j\le N} \sigma_{j}$ where
$$
\sigma_{j} = \prod_{\omega\in W(I_j)} \left(P_\omega, Q_\omega\right)
$$
for all $k$.
Note that an interval of has size $\ell^m$ for some $m\in \bbn$ by definition. If an interval $I_j$ has size $\ell$ then $\sigma_{j}$ is a product of $\ell$ transpositions on $L(I_j) \cup L(I'_j)$ whose supports belong to a ball of radius $A_0\ell^k +s$. So we have a bound
\begin{equation}\label{eq:poly bound on sigma j 1}
|\sigma_{j}|\leq 5(A_0\ell^k+s)\leq (5A_0+s)\ell^k =\rho_j(k) \ell^k.
\end{equation} If an interval $I_j$ has size $\ell^{m}$ with $2\leq m\leq k$ then $I_j$ and $I'_j$ were common subintervals of original intervals for $V_P^k$ and $V_Q^k$ that we started with. Thus there exist $u\in V_P$ and $v\in V_Q$ such that $I_j=D_{u,m}\cap V_P^k$, and $I'_j=D_{v,m}\cap V_Q^k$. Since both $L(u)$ and $L(v)$ are stable points, we have a polynomial as in (\ref{eq:bound by poly}) to bound $|\sigma_j|$ by
\begin{equation}\label{eq:poly bound on sigma j 2}
 |\sigma_{j}|\leq \rho_j(m)\ell^{m}.
\end{equation}

So far we have found upper bounds for individual $\sigma_{j}$ depending on the size of the interval $I_j$. To obtain a universal bound consider the set of all polynomials $\{\rho_1, \cdots, \rho_N\}$ that we need for (\ref{eq:poly bound on sigma j 1}) and (\ref{eq:poly bound on sigma j 2}). Let $\rho$ denote the polynomial in the above set with the largest degree (or fastest growth).  Assuming that $\rho$ is increasing on $\bbn$, we have
\begin{equation}\label{eq:poly bound on sigma j}
 |\sigma_{j}|\leq  \rho(k)\ell^{k}
\end{equation}for all $j$. Thus we have found a polynomial $\ell \rho(k)$ such that
$$
|\E(\phi(\tau)^k)| = \sum_{1\le j\le N} |\sigma_j| \leq \ell\rho(k) \ell^{k}
$$ for all $k\in \bbn$.
\end{proof}

We remark that the expression (\ref{eq:p_v}) can be used to show that $V_P^k\subset B_{n,r}$ with $r$ bounded by a linear term of $\ell^k$ as in Proposition \ref{prop:stable points}.

\begin{Thm}\label{thm:GR bound}
Let $\phi$ be a monomorphism of $\calH_n$ with $ 2\leq n$. Then
$\GR(\phi)= \ell$ where $\ell =\ell(\phi)$.
\end{Thm}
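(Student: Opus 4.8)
The plan is to establish the two inequalities $\GR(\phi)\le\ell$ and $\GR(\phi)\ge\ell$ separately, working throughout with the effective formula $\GR(\phi)=\lim_{k\to\infty}(|\phi^k|)^{1/k}$, where $|\phi^k|=\max\{|\phi^k(g_2)|,\dots,|\phi^k(g_n)|,|\phi^k(\alpha)|\}$ (only $g_2$ and $\alpha$ when $n=2$). Since $\GR(\psi\phi\psi^{-1})=\GR(\phi)$ for an automorphism $\psi$, and since $\GR(\phi^m)=\GR(\phi)^m$ while $\ell(\phi^m)=\ell(\phi)^m$, I would first pass to the normalized situation already in force since Lemma~\ref{lm:bound for ell=1}: the common source is $R_1$ and $\phi(g_i)$ differs from $g_i^{\ell}$ by a finitary element, say $\phi(g_i)=u_i\,g_i^{\ell}$ with $u_i\in\FSym_n$; each $u_i$ is a product of a fixed number $c_i$ of transpositions, and $\alpha$ is the single transposition $\bigl((1,1),(1,2)\bigr)$.

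For the lower bound I would pass to the abelianization. The quotient map $q\colon\calH_n\to\calH_n/\calH_n'\cong\bbz^{n-1}$ is length non-increasing (it sends each $g_i$ to a free generator and $\alpha$ to $0$) and satisfies $q\circ\phi=\bar\phi\circ q$ for the induced endomorphism $\bar\phi$ of $\bbz^{n-1}$, so $\GR(\phi)\ge\GR(\bar\phi)$. Under the normalization $\bar\phi(\pi(g_i))=\pi(u_ig_i^{\ell})=\ell\,\pi(g_i)$, i.e. $\bar\phi=\ell\cdot\mathrm{id}$, hence $\GR(\bar\phi)=\sp[\bar\phi]=\ell$ (recall that the growth rate of an endomorphism of a finitely generated free abelian group is its spectral radius, cf. Section~\ref{sec:not monic}). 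In particular this already yields $\GR(\phi)\ge1$ in every case, which is the only lower bound needed when $\ell=1$.

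For the upper bound the input is Lemma~\ref{lm:bound on transpositions} when $\ell\ge2$ (for a transposition $\tau$ one has $|\phi^k(\tau)|\le\rho(k)\ell^k$ with $\rho$ a polynomial) and Lemma~\ref{lm:bound for ell=1} when $\ell=1$ (then $|\phi^k(\tau)|\le A_2k$). First this bounds $|\phi^k(\alpha)|$ directly, since $\alpha$ is a transposition; the finitely many points of $X_n$ outside $\ES$ contribute only a bounded additive term, because $X_n\setminus\ES$ is a finite $\phi(\calH_n)$-invariant set (Proposition~\ref{prop:decomposition}) lying in a fixed ball, and after one application of $\tp$ the relevant tree is rooted inside $\ES$. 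For the infinite-order generators I would iterate $\phi(g_i)=u_ig_i^{\ell}$ to get
$$
\phi^{k+1}(g_i)=\phi^k(u_i)\,\phi^k(g_i)^{\ell},\qquad |\phi^{k+1}(g_i)|\le|\phi^k(u_i)|+\ell\,|\phi^k(g_i)|,
$$
and $|\phi^k(u_i)|\le c_i\bigl(\rho(k)\ell^k+O(1)\bigr)$ since $u_i$ is a product of $c_i$ transpositions. Writing $L_k=\max_i|\phi^k(g_i)|$, this reads $L_{k+1}\le\ell L_k+C\rho(k)\ell^k$, whose solution is $L_k\le \widetilde\rho(k)\ell^k$ for a polynomial $\widetilde\rho$ (when $\ell=1$ the recursion is $L_{k+1}\le L_k+C'k$, so $L_k=O(k^2)$). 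Hence $|\phi^k|\le\max\{\widetilde\rho(k),\rho(k)\}\ell^k$ (resp. $O(k^2)$), and therefore $\GR(\phi)=\lim_k(|\phi^k|)^{1/k}\le\ell$.

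Combining the two bounds gives $\GR(\phi)=\ell$. The part I expect to require real care is not any of the bookkeeping above but the already-established Lemma~\ref{lm:bound on transpositions}: pushing the word length of $\phi^k$ applied to a single transposition down to a polynomial times $\ell^k$ (rather than something like $\ell^{2k}$) is exactly where the fine structure of the expanding trees is used — the decomposition of $V_P^k$ into only polynomially many intervals (Lemma~\ref{lm:bound on interval}) together with the fact that the $\ell^k$ transpositions comprising $\E(\phi^k(\tau))$ fall into these few translation-equivalence classes. Granting that, the remaining work is the reduction to the normalized form, the abelianization computation, the elementary recursion, and the harmless treatment of the finite set $X_n\setminus\ES$, none of which should present difficulty.
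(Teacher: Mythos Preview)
Your proposal is correct and follows essentially the same route as the paper: reduce via Proposition~\ref{cor:key lemma} (passing to $\phi^d$) to the normalized form $\phi(g_i)=g_i^{\ell}f_i$, obtain the lower bound from the induced map $\bar\phi=\ell\cdot\mathrm{id}$ on the abelianization, and obtain the upper bound by feeding Lemma~\ref{lm:bound on transpositions} (or Lemma~\ref{lm:bound for ell=1} when $\ell=1$) into the recursion $|\phi^{k+1}(g_i)|\le\ell\,|\phi^k(g_i)|+|\phi^k(f_i)|$. The only cosmetic differences are that the paper writes $f_i$ on the right rather than the left, and for $n\ge3$ it uses the generating set $\{g_2,\dots,g_n\}$ without $\alpha$, so there is no need to separately bound $|\phi^k(\alpha)|$.
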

\begin{proof}
If $\ell(\phi)=1$ it suffices to establish an upper bound since $\GR(\phi)<1$ means that $\phi$ is an eventually trivial map. In view of Proposition \ref{cor:key lemma} let us assume
$\phi(g_i) = g_if_i$ for each $i$ with $f_i \in \FSym_n$. Each $f_i$ is a product of transpositions; $f_i = \tau_1, \cdots, \tau_{F_i}$. Lemma~\ref{lm:bound for ell=1} implies that there exists a constant $A_2$ such that
$$
|\phi^k(f_i)|=| \phi^k(\tau_1) \cdots \phi^k(\tau_{F_i})|\leq  A_2F_i k
$$ for all $2\le i\le n$ and $k\in \bbn$. Let $F$ denote the maximum of $F_i$'s. From $\phi^{k+1} (g_i) = \phi^k(g_i) \phi^k(f_i)$, we have
$$
|\phi^{k+1} (g_i)| \leq | \phi^k(g_i) |+|\phi^k(f_i)| \leq | \phi^k(g_i) |+A_2 F k
$$for all $i$ and $k$. Take a polynomial $\rho$ such that
$$
\rho(k) + A_2 Fk \leq \rho(k+1)\text {  and  } G\leq\rho(1)
$$ where $G=\max\{ |\phi(g_i)|: 2\leq i\le n\}$. By induction on $k$, we check that
$$
|\phi^{k+1} (g_i)| \leq  \rho(k) +A_2 F k\leq \rho(k+1)
$$for all $i$. Thus we have a desired upper bound
$$
\GR(\phi) \le\lim_{k\to\infty}\rho(k)^{1/k}=1.
$$Therefore every monomorphism $\phi$ with $\ell(\phi)=1$ satisfies $\GR(\phi) \leq 1$ since $\GR(\phi)^d=\GR(\phi^d) \leq 1$ by Proposition \ref{cor:key lemma}.

Let us assume that $\ell(\phi)\geq 2$ and that $\phi$ satisfies $\phi(g_i) = g_i^\ell f_i
$ for all $2\le i\le n$ with $f_i\in \FSym_n$. We first establish an upper bound for $\GR(\phi)$. Each $f_i$ can be written as a product of $F_i$ transpositions, $f_i = \tau_1, \cdots, \tau_{F_i}$. By Lemma~\ref{lm:bound on transpositions} we can find polynomials $\rho_1, \cdots , \rho_{F_i}$ such that
$$
|\phi^k(f_i)|=| \phi^k(\tau_1) \cdots \phi^k(\tau_{F_i})|\leq \Bigl(\rho_1(k)+\cdots +\rho_{F_i}(k)\Bigr) \ell^k
$$for all $2\leq i\leq n$ and $k\in \bbn$. Taking $\rho$ to be the polynomial with largest degree over all $F_2 +\cdots +F_n$ polynomials, we have
$$
|\phi^k(f_i)| \leq F\rho(k)\ell^{k}
$$ for all $i$ and $k$ where $F$ denotes the maximum over all $F_i$'s. Since $ \phi^{k+1} (g_i) = \bigl(\phi^k(g_i)\bigr)^\ell \phi^k(f_i)$, we have
\begin{equation}\label{eq:half bound}
|\phi^{k+1} (g_i)| \leq\ell | \phi^k(g_i) |+|\phi^k(f_i)| \leq \ell | \phi^k(g_i) |+F\rho(k)\ell^{k}
\end{equation}for all $k$. We seek a polynomial $\bar{\rho}_i$ such that $|\phi^k(g_i)|\leq \bar{\rho}_i(k) \ell^k$ for all $k$ and $i$. There is no obstruction to take a polynomial $\bar{\rho_i}$ such that
$$\bar{\rho_i}(k) + F\rho(k) \leq \bar{\rho_i}(k+1) \text{  and  }|\phi(g_i)|\leq \bar{\rho_i}(1)
$$ for each $i$. Take a polynomial $\bar{\rho}$ such that $\bar{\rho_i} (k)\leq \rho(k)$ for all $i$ and $k$. Now (\ref{eq:half bound}) becomes
$$
|\phi^{k+1} (g_i)| \leq \bar{\rho}(k) \ell^{k+1}+ F\rho(k)\ell^{k} \leq \Bigl(\bar{\rho}(k) + F\rho(k)\Bigr)\ell^{k+1} \leq \bar{\rho}(k+1)\ell^{k+1}
$$for all $i$. Therefore we have a universal bound $|\phi^{k} (g_i)| \leq\bar{\rho}(k)\ell^{k} $ for all $i$ and $k$ where ${\bar{\rho}}$ is a polynomial on $k$. We have a desired upper bound
$$
\GR(\phi) \le\lim_{k\to\infty}\Bigl({\bar{\rho}(k)\ell^{k}}\Bigr)^{1/k}=\ell
$$
The induced homomorphism $\bar{\phi}$ on the abelianization of $\calH_n$ maps $g_i$ to $g_i^\ell$ for all $i$. So $\GR(\phi)\geq\GR(\bar{\phi})=\ell$. So far we have shown that $\GR(\phi)=\ell (\phi)$ if $\phi$ satisfies the above assumption. Now Proposition \ref{cor:key lemma} completes the proof because $\GR(\phi)^d=\GR(\phi^d) =\ell^d$.
\end{proof}

Recall that $R_1$ is the common source of all generators $g_2, \cdots, g_{n-1}$ and that each of them has a unique target such that targets are pairwise distinct. Lemma \ref{lemma:partial_translation} tells us that the behavior of $\phi(g_i)$'s is similar to this up to an element of $\Sigma_n$, the group of outer automorphisms of $\calH_n$ described in Theorem \ref{auto}.
Observe that a monomorphism $\phi$ of $\calH_n$ defines a permutation $\delta_\phi$ on $\{1, \cdots, n\}$. For each $i$, $\phi(g_i)$ has a unique target, which defines the bijection $\gamma$ as in (\ref{eq:permutation on n-1}); the target of $\phi(g_i)$ is $R_{\gamma(i)}$. We extend $\gamma:\{2, \cdots, n\} \hookrightarrow \{1, \cdots, n\}$ to get a bijection $\delta_\phi$ on $\{1, \cdots, n\}$ by setting $R_{\delta_\phi(1)}$ to be the unique source of $\phi(g_i)$'s. Note that $\delta_\phi$ determines an arrangement of $n$ rays
\begin{equation}\label{eq:delta}
R_1 \mapsto R_{\delta_\phi(1)} \; \text {and   }R_i\mapsto R_{\gamma(i)}=R_{\delta_\phi(i)}
\end{equation}
for $2\leq i\leq n$.
Lemma \ref{lemma:partial_translation} guarantees that $\delta_\phi$ is a bijection.

Being an element of the symmetric group $\Sigma_n$, $\delta=\delta_\phi$ determines a permutation matrix of $A_\delta\in \mathrm{GL}(n,\bbz)$, which has exactly one entry of $1$ in each row and each column and $0$'s elsewhere, with respect to the standard basis $\{e_1, \cdots, e_n\}$ of $\bbz^n$. The matrix $A_\delta$ has the form
\begin{align*}
\bordermatrix{
&& i&\cr
&&\vdots&\cr
\delta(i)&\cdots&1&\cdots\cr
&&\vdots&\cr
}.
\end{align*} Under the correspondence $R_i \leftrightarrow e_i$ $A_\delta$ realizes $\delta$ in (\ref{eq:delta}) as $e_{\delta(i)} = A_\delta (e_i)$ for each $i$. Recall that $\pi:\calH_n \to \bbz^n$ measures translation lengths on the rays; $\pi(g_i) = e_i-e_1$ for all $i$. The image $\pi(\calH_n)=\{(m_1,\cdots, m_n) \in \bbz^n| \sum m_i=0\}$ is freely generated by $\bar{g}_2, \cdots, \bar{g}_n$ where $\bar{g}= \pi(g)$.
The map $A_\delta$ restricts on the $A_\delta$-invariant subgroup $\pi(\calH_n)$ to define a matrix $\overbar{A}_\delta \in \mathrm{GL}(n-1,\bbz)$ with respect to the ordered basis $\{\bar{g}_2, \cdots, \bar{g}_n\}$.\\
\indent On the other hand, the induced map $\overbar{\phi}$ in the following commutative diagram can be expressed as a matrix $A_{\overbar{\phi\,}}$ with respect to the ordered basis $\{\bar{g}_2, \cdots, \bar{g}_n\}$ of $\bbz^{n-1}$.
$$
\CD
1@>>>[\calH_n,\calH_n]@>>>\calH_n@>{\pi}>>\bbz^{n-1}@>>>1\\
@.@VV{\phi'}V@VV{\phi}V@VV{\overbar{\phi\,}}V\\
1@>>>[\calH_n,\calH_n]@>>>\calH_n@>{\pi}>>\bbz^{n-1}@>>>1
\endCD
$$ 
Since $\phi(g_i)$ has a unique target $R_{\delta(i)}$ and a unique source $R_\delta(1)$, where $\phi(g_i)$ acts as a translation by $\ell$ and $-\ell$ respectively, we have
\begin{equation*}\label{eq:matrix expression}
\overbar{\phi\,}(\bar{g}_i) = \overbar{\phi(g_i) }= \ell(e_{\delta(i)} -e_{\delta(1)})=\ell A_\phi(e_{i} -e_{1}) = \ell \overbar{A}_\delta(\bar{g}_i)
\end{equation*} for each $i$ where $\ell =\ell(\phi)$.
Thus we have
$$
A_{\overbar{\phi\,}} = \ell \overbar{A}_\phi.
$$ We also have
$$
A_{\overbar{\phi^k}}=A_{(\overbar{\phi\,})^k}=\bigl(A_{\overbar{\phi\,}}\bigr)^k = \ell^k \bigl(\overbar{A}_\phi\bigr)^k
$$ for all $k \in \bbn$. In particular, $A_{\overbar{\phi^d}} = \bigl(\overbar{A}_\phi\bigr)^d= \ell^k I$ where $d=d(\delta_\phi)$ denotes the order of $\delta_\phi$ and $I\in \mathrm{GL}(n-1,\bbz)$ is the identity matrix. This proves Proposition \ref{cor:key lemma}.

\begin{Prop}\label{cor:key lemma}
A monomorphism $\phi$ of $\calH_n$ satisfies that for each $i$, $2\leq i\leq n$,
$$\phi^d(g_i) \sim \left(g_{i}\right)^{\ell^d}
$$ where $d=d(\delta_\phi)$ and $\ell = \ell(\phi)$.
\end{Prop}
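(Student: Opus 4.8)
The plan is to transfer the whole computation to the abelianization $\pi\colon\calH_n\to\bbz^{n-1}$, where a monomorphism becomes a rigid linear map. Since $g\sim h$ means exactly $\pi(g)=\pi(h)$, the assertion $\phi^d(g_i)\sim(g_i)^{\ell^d}$ is equivalent to the equality $\overbar{\phi^d}(\bar g_i)=\ell^d\,\bar g_i$ in $\bbz^{n-1}$ for every $2\le i\le n$, where $\bar g_i=\pi(g_i)$ and $\overbar{\phi}$ denotes the endomorphism of $\bbz^{n-1}$ induced by $\phi$. So it suffices to identify the matrix of $\overbar{\phi}$ and take its $d$-th power.

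First I would feed in the structural information from Lemma~\ref{lemma:partial_translation} and Steps~3--4 of its proof: $\phi$ has a unique common source ray $R_{\delta_\phi(1)}$ on which every $\phi(g_i)$ acts with translation length $-\ell$, where $\ell=\ell(\phi)$, and $i\mapsto\delta_\phi(i)$ is a bijection of $\{2,\dots,n\}$ onto $\{1,\dots,n\}\setminus\{\delta_\phi(1)\}$ for which $R_{\delta_\phi(i)}$ is the unique target of $\phi(g_i)$. Because $\phi(g_i)$ is an eventual translation with $\sum_k\pi_k(\phi(g_i))=0$ and with no other nonzero translation length, its translation length on $R_{\delta_\phi(i)}$ is forced to equal $+\ell$; hence $\pi(\phi(g_i))=\ell\,(e_{\delta_\phi(i)}-e_{\delta_\phi(1)})$ in $\bbz^n$. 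Rewriting this in the free basis $\bar g_2,\dots,\bar g_n$ of $\pi(\calH_n)=\{\sum_k m_k e_k:\sum_k m_k=0\}$ via $e_j-e_k=\bar g_j-\bar g_k$ with the convention $\bar g_1=0$, I obtain $\overbar{\phi}(\bar g_i)=\ell\,\overbar{A}_\delta(\bar g_i)$, where $A_\delta\in\GL(n,\bbz)$ is the permutation matrix $e_i\mapsto e_{\delta_\phi(i)}$ and $\overbar{A}_\delta\in\GL(n-1,\bbz)$ is its restriction to the $A_\delta$-invariant summand $\pi(\calH_n)$. As $\bar g_2,\dots,\bar g_n$ is a basis, this says $\overbar{\phi}=\ell\,\overbar{A}_\delta$.

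Now I would iterate: the abelianization is compatible with composition, so $\overbar{\phi^d}=(\overbar{\phi})^{d}=\ell^{d}\,(\overbar{A}_\delta)^{d}$, and $(\overbar{A}_\delta)^{d}=\overbar{A_{\delta_\phi^{d}}}=\mathrm{id}$ because $d$ is the order of $\delta_\phi$. Therefore $\overbar{\phi^d}=\ell^{d}\cdot\mathrm{id}$, that is, $\pi(\phi^d(g_i))=\ell^{d}\,\pi(g_i)=\pi(g_i^{\ell^d})$ for every $2\le i\le n$, which is the claim.

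The only substantive ingredient is Lemma~\ref{lemma:partial_translation} — that being a monomorphism forces all $\phi(g_i)$ to share one source with a common translation length $-\ell$ and to have pairwise-distinct singleton targets; that is where the actual work lies, and it is done before the statement. Everything afterwards is bookkeeping in a rank-$(n-1)$ free abelian group. The one point to keep careful is the identity $e_{\delta_\phi(i)}-e_{\delta_\phi(1)}=\bar g_{\delta_\phi(i)}-\bar g_{\delta_\phi(1)}$: the convention $\bar g_1=0$ has to be used consistently, and one should observe that $\overbar{A}_\delta$ is well defined on $\pi(\calH_n)$ regardless of whether $\delta_\phi$ fixes the index $1$, because the permutation $A_\delta$ preserves the coordinate sum.
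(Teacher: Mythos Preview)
Your proposal is correct and follows essentially the same route as the paper: both transfer the statement to the abelianization $\bbz^{n-1}$, use Lemma~\ref{lemma:partial_translation} to read off $\pi(\phi(g_i))=\ell(e_{\delta_\phi(i)}-e_{\delta_\phi(1)})$, identify $\overbar{\phi}=\ell\,\overbar{A}_\delta$ for the permutation matrix $A_\delta$, and conclude $\overbar{\phi^d}=\ell^d\,\mathrm{id}$ from $\delta_\phi^d=\mathrm{id}$. Your write-up is, if anything, slightly more careful about the well-definedness of $\overbar{A}_\delta$ on the zero-sum hyperplane and the convention $\bar g_1=0$.
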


\end{document}

\bibliographystyle{siam}
\bibliography{growth_H_n_ref}
\end{document}